\newcommand{\n}{\noindent}
\newtheorem{theorem}{Theorem}[section]
\newtheorem{proposition}{Proposition}[section]
\newtheorem{lemma}{Lemma}[section]
\newcommand{\Mod}[1]{\ (\mathrm{mod}\ #1)}
\author{Ratan Lal and Vipul Kakkar}
\title{Automorphisms of Zappa-Sz\'{e}p Product}
\begin{document}
	\maketitle
	\begin{abstract}
	\n	In this paper, we have found the automorphism group of the Zappa-Sz\'{e}p product of two groups. Also, we have computed the automorphism group of the Zappa-Sz\'{e}p product of a cyclic group of order $m$ and a cyclic group of order $p^{2}$, where $p$ is a prime.
	\end{abstract}
\n \textbf{Keywords:} {Zappa-Sz\'{e}p Product, Automorphism Group.}
	\section{Introduction}
	\n	A group $G$ is called the internal Zappa-Sz\'{e}p product of its two subgroups $H$ and $K$ if $G=HK$ and $H\cap K = \{1\}$. The Zappa-Sz\'{e}p product is a natural generalization of the semidirect product of two groups in which neither of the factor is required to be normal. If $G$ is the internal Zappa-Sz\'{e}p product of $H$ and $K$, then $K$ appears as a right transversal to $H$ in $G$. Let $h\in H$ and $k\in K$. Then $kh=\sigma(x,h)\theta(x,h)$, where $\sigma(x,h)\in H$ and $\theta(x,h)\in K$. This determines the maps $\sigma: K \times H \rightarrow H$ and $\theta: K\times H \rightarrow K$. These maps are called the matched pair of groups. We denote $\sigma(k,h)= k\cdot h$ and $\theta(k,h) = k^{h}$. These maps satisfy the following conditions (See \cite{af})
	
	\begin{itemize}
		\item[$(C1)$] $1\cdot h = h$ and $k^{1} = k$,
		\item[($C2$)] $k\cdot 1 = 1 = 1^{h}$,
		\item[$(C3)$] $kk^{\prime}\cdot h = k\cdot (k^{\prime}\cdot h)$,
		\item[($C4$)] $(kk^{\prime})^{h} = k^{k^{\prime}\cdot h}{k^{\prime}}^{h}$,
		\item[($C5$)] $k\cdot (hh^{\prime}) = (k\cdot h)(k^{h}\cdot h^{\prime})$,
		\item[$(C6)$] $k^{hh^{\prime}} = (k^{h})^{h^{\prime}}$,
	\end{itemize} 
	for all $h,h^{\prime} \in H$ and $k,k^{\prime}\in K$.

	\vspace{.2cm}
	
	\n On the other hand, let $H$ and $K$ are two groups. Let $\sigma: K \times H \rightarrow H$ and $\theta: K\times H \rightarrow K$ be two maps defined by $\sigma(k,h)= k\cdot h$ and $\theta(k,h) = k^{h}$ which satisfy the above conditions. Then, the external Zappa-Sz\'{e}p product $G=H\bowtie K$ of $H$ and $K$ is the group defined on the set $H\times K$ with the binary operation defined by
	\begin{equation*}
	(h,k)(h^{\prime},k^{\prime}) = (h(k\cdot h^{\prime}),k^{h^{\prime}}k^{\prime}).
	\end{equation*}
	
	\n The internal Zappa-Sz\'{e}p product is isomorphic to the external Zappa-Sz\'{e}p product (see \cite[Proposition 2.4, p. 4]{af}). We will identify the external Zappa-Sz\'{e}p product with the internal Zappa-Sz\'{e}p product. 
	
	\vspace{.2cm}
	
	\n The Zappa-Sz\'{e}p product of two groups was introduced by G. Zappa in \cite{gz}. J. Sz\'{e}p in the series of papers (few of them are \cite{sz1,sz2,sz3,sz4}) studied such type of products. From the QR decomposition of matrices, one concludes that the general linear group $GL(n,\mathbb{C})$ is a Zappa-Sz\'{e}p product of the unitary group and the group of upper triangular matrices. Z. Arad and E. Fisman in \cite{za} studied the finite simple groups as a Zappa-Sz\'{e}p product of two groups $H$ and $K$ with the order of $H$ and $K$ coprime. In the same paper, they studied the finite simple groups as a Zappa-Sz\'{e}p product of two groups $H$ and $K$ with one of $H$ or $K$ is $p$-group, where $p$ is a prime. From the main result of \cite{ph}, one observes that a finite group $G$ is solvable if and only if $G$ is a Zappa-Sz\'{e}p product of a Sylow $p$-subgroup and a Sylow $p$-complement.
	
	\vspace{.2cm}
	
	\n Note that, if either of the actions $k\cdot h$ or $k^h$ is a group homomorphism, then the Zappa-Sz\'{e}p product reduces to the semidirect product of groups.  M. J. Curran \cite{crn2008} and N. C. Hsu \cite{nch} studied the automorphisms of the semidirect product of two groups as the $2\times 2$ matrices of maps satisfying some certain conditions. In this paper (with the same terminology as in \cite{crn2008} and \cite{nch}), we have found the automorphism group of the Zappa-Sz\'{e}p product of two groups as the $2\times 2$ matrices of maps satisfying some certain conditions. As an application, we have found the automorphism group of the Zappa-Sz\'{e}p product of two cyclic groups in which one is of order $p^2$ and other is of order $m$. Throughout the paper, $\mathbb{Z}_{n}$ denotes the cyclic group of order $n$ and $U(n)$ denotes the group of units of $n$. Also, $Aut(G)$ denotes the group of all automorphisms of a group $G$. Let $U$ and $V$ be groups. Then $CrossHom(U,V)$ denotes the group of all crossed homomorphisms from $U$ to $V$. Also, if $U$ acts on $V$, then $Stab_{U}(V)$ denotes the stabilizer of $V$ in $U$.
	
	%%%%%%%%%%%%%%%%%%%%%%%%%%%%%%%%%%%%%%%%%%%%%%%%%%%%%%%%%%%%%%%%%%%%%%%%%%%%%%%%%%%%%%%%%%%%%%%%%%%%%%%%%%%%%%%%%%%%%%%%%%%%%
	
	\section{Structure of Automorphism Group}
	\n  Let $G = H\bowtie K$ be the Zappa-Sz\'{e}p product of two groups $H$ and $K$. Let $U, V$ and $W$ be any groups. $Map(U, V)$ denotes the set of all maps between the groups $U$ and $V$. If $\phi, \psi \in Map(U, V)$ and $\eta \in Map(V, W)$, then $\phi + \psi \in Map (U,V)$ is defined by $(\phi + \psi)(u) = \phi(u)\psi(u)$, $\eta\phi \in Map(U,W)$ is defined by $\eta\phi(u) = \eta(\phi(u))$, $\phi \cdot \psi \in Map(U,V)$ is defined by $(\phi \cdot \psi)(u) = \phi(u)\cdot \psi(u)$ and $\phi^{\psi} \in Map(U, V)$ is defined by $\phi^{\psi}(u) = \phi(u)^{\psi(u)}$, for all $u\in U$.
	
	\vspace{.2cm}
	
	\n Now, consider the set, \begin{equation*}
	\mathcal{A} = \left\{\begin{pmatrix}
	\alpha & \beta\\
	\gamma & \delta
	\end{pmatrix} \;|\; \begin{matrix}
	\alpha \in Map(H,H), & \beta \in Map(K,H),\\
	\gamma \in Map(H,K),\; \text{and} &  \delta \in Map(K,K)
	\end{matrix}  \right\},
	\end{equation*}
	where $\alpha, \beta, \gamma$ and $\delta$ satisfy the following conditions,
	\begin{itemize}
		\item[$(A1)$] $\alpha(hh^{\prime}) = \alpha(h)(\gamma(h)\cdot \alpha(h^{\prime}))$,
		\item[$(A2)$] $\gamma(hh^{\prime}) = \gamma(h)^{\alpha(h^{\prime})}\gamma(h^{\prime})$,
		\item[$(A3)$] $\beta(kk^{\prime}) = \beta(k)(\delta(k)\cdot \beta(k^{\prime}))$,
		\item[$(A4)$] $\delta(kk^{\prime}) = \delta(k)^{\beta(k^{\prime})}\delta(k^{\prime})$,
		\item[$(A5)$] $\beta(k)(\delta(k)\cdot \alpha(h)) = \alpha(k\cdot h)(\gamma(k\cdot h)\cdot \beta(k^{h}))$,
		\item[$(A6)$] $\delta(k)^{\alpha(h)}\gamma(h) = \gamma(k\cdot h)^{\beta(k^{h})}\delta(k^{h})$, 
		\item[$(A7)$] For any $h^{\prime}k^{\prime}\in G$, there exists a unique $h\in H$ and $k\in K$ such that $h^{\prime} = \alpha(h)(\gamma(h)\cdot \beta(k))$ and $k^{\prime} = \gamma(h)^{\beta(k)}\delta(k)$.
	\end{itemize}
	\n Then, the set $\mathcal{A}$ forms a group with the binary operation defined by
	\begin{equation*}
	\begin{pmatrix}
	\alpha^{\prime} & \beta^{\prime}\\
	\gamma^{\prime} & \delta^{\prime}
	\end{pmatrix}\begin{pmatrix}
	\alpha & \beta\\
	\gamma & \delta
	\end{pmatrix} = \begin{pmatrix}
	\alpha^{\prime}\alpha + \gamma^{\prime}\alpha \cdot \beta^{\prime}\gamma & \alpha^{\prime}\beta + \gamma^{\prime}\beta \cdot \beta^{\prime}\delta\\
	(\gamma^{\prime}\alpha)^{\beta^{\prime}\gamma}+ \delta^{\prime}\gamma & (\gamma^{\prime}\beta)^{\beta^{\prime}\delta}+ \delta^{\prime}\delta
	\end{pmatrix}.
	\end{equation*} 
	\begin{proposition}
		Let $\begin{pmatrix}
		\alpha & \beta\\
		\gamma & \delta
		\end{pmatrix}\in \mathcal{A}$. Then $\alpha(1) = 1= \beta(1) = \gamma(1) = \delta(1)$. 
	\end{proposition}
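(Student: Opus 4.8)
The plan is to evaluate the defining conditions (A1)--(A4) at the identity and exploit the elementary properties (C1)--(C3) of the matched pair to cancel terms. First I would record two facts about the matched-pair actions that will be used repeatedly: for each fixed $k\in K$ the map $h\mapsto k\cdot h$ is a bijection of $H$ (its inverse is $h\mapsto k^{-1}\cdot h$, by (C1) and (C3)), and dually for each fixed $h\in H$ the map $k\mapsto k^{h}$ is a bijection of $K$ (by (C1) and (C6)). Both are immediate consequences of the cocycle-type relations already available.

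To obtain $\alpha(1)=1$, I would substitute $h=h'=1$ into (A1). The left-hand side is $\alpha(1)$, and since $1\cdot 1=1$ the right-hand side becomes $\alpha(1)\bigl(\gamma(1)\cdot\alpha(1)\bigr)$. Cancelling $\alpha(1)$ in $H$ leaves $\gamma(1)\cdot\alpha(1)=1$. By (C2) we also have $\gamma(1)\cdot 1=1$, so $\alpha(1)$ and $1$ have the same image under the bijection $h\mapsto\gamma(1)\cdot h$; injectivity then forces $\alpha(1)=1$.

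Next, to get $\gamma(1)=1$, I would substitute $h=h'=1$ into (A2), giving $\gamma(1)=\gamma(1)^{\alpha(1)}\gamma(1)$, hence $\gamma(1)^{\alpha(1)}=1$ after right-cancellation in $K$. Having just shown $\alpha(1)=1$, condition (C1) yields $\gamma(1)^{\alpha(1)}=\gamma(1)^{1}=\gamma(1)$, so $\gamma(1)=1$. The statements $\beta(1)=1$ and $\delta(1)=1$ then follow by the symmetric argument applied to (A3) and (A4): plugging $k=k'=1$ into (A3) and using (C2) together with injectivity of $h\mapsto\delta(1)\cdot h$ gives $\beta(1)=1$, and substituting this into (A4) together with (C1) gives $\delta(1)=1$.

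I do not expect any genuine obstacle here; the only subtlety is that none of (A1)--(A4) alone pins down a value outright, so the argument must be staged---first using injectivity of the dot-action to kill $\alpha(1)$ (respectively $\beta(1)$), and only afterwards using (C1) to collapse the exponent action and conclude $\gamma(1)=1$ (respectively $\delta(1)=1$). Conditions (A5)--(A7) are not needed for this proposition.
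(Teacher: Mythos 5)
Your proof is correct and follows essentially the same route as the paper: evaluate $(A1)$--$(A4)$ at the identity, cancel in the group, use $(C2)$ together with injectivity of the action $h\mapsto\gamma(1)\cdot h$ (which the paper realizes by explicitly applying $\gamma(h)^{-1}\cdot$ via $(C1)$ and $(C3)$), and then collapse the exponent action with $(C1)$. The only cosmetic difference is that the paper keeps an arbitrary $h$ in the first argument and sets $h'=1$, whereas you set both arguments equal to $1$; the logical content is identical.
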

	\begin{proof}
		Let $h\in H$ be any element. Then, using $(A1)$, $\alpha(h) = \alpha(h1) = \alpha(h)(\gamma(h)\cdot \alpha(1)) $ which implies that $\gamma(h)\cdot \alpha(1) = 1 = \gamma(h)\cdot 1$ by $(C2)$. Thus $\gamma(h)^{-1}\cdot(\gamma(h)\cdot \alpha(1)) = \gamma(h)^{-1}\cdot(\gamma(h)\cdot 1)$. Hence, using $(C1)$, $\alpha(1) = 1$. 
		
		\vspace{0.2 cm}
		
		\n Using $(A2)$, $\gamma(h) = \gamma(h1) = \gamma(h)^{\alpha(1)}\gamma(1)$. Using $(C1)$, $\gamma(1)=1$. Using the similar argument, we get $\beta(1)=1$ and $\delta(1)=1$.
	\end{proof}
	
	\n 	Let us define the kernel of the map $\alpha\in Map(H,H)$ as usual, that is, $\ker(\alpha) = \{h\in H \;|\; \alpha(h) = 1\}$. Here, we should remember that the map $\alpha$ need not to be a homomorphism. $\ker(\beta), \ker(\gamma)$ and $\ker(\delta)$ are defined in the same sense.
	\begin{lemma}\label{lk1}
		\begin{align*}
	\begin{matrix}
		(i) \ker(\alpha)\le H, & (ii) \ker(\beta)\le K,\\
		(iii) \ker(\gamma)\le H, & (iv) \ker(\delta)\le K,\\
\hspace{1.8cm}(v) \ker(\alpha)\cap \ker(\gamma) = \{1\},& \hspace{1.8cm} (vi) \ker(\beta)\cap \ker(\delta) = \{1\}.
	\end{matrix}
	\end{align*}
	\end{lemma}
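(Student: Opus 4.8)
The plan is to split the six claims into two families. Statements (i)--(iv) are ``subgroup'' assertions that follow from the quasi-homomorphism identities $(A1)$--$(A4)$ together with the matched-pair axioms $(C1)$--$(C3)$, while (v)--(vi) are ``trivial intersection'' assertions that rely on the uniqueness clause in $(A7)$. In every case the identity lies in the relevant kernel by the preceding Proposition, since $\alpha(1)=\beta(1)=\gamma(1)=\delta(1)=1$, so throughout I only need to check closure under products and under inverses.

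For (i) I would take $h,h'\in\ker(\alpha)$ and apply $(A1)$ to get $\alpha(hh')=\alpha(h)(\gamma(h)\cdot\alpha(h'))=1\cdot(\gamma(h)\cdot 1)=1$, the last equality being $(C2)$; this gives closure under products. For inverses, applying $(A1)$ to $hh^{-1}=1$ yields $1=\alpha(h)(\gamma(h)\cdot\alpha(h^{-1}))=\gamma(h)\cdot\alpha(h^{-1})$, and acting on the left by $\gamma(h)^{-1}\cdot(-)$ and invoking $(C3)$ and $(C1)$ collapses this to $\alpha(h^{-1})=1$. Claim (iii) is the analogous argument using $(A2)$: if $\gamma(h)=\gamma(h')=1$ then $\gamma(hh')=\gamma(h)^{\alpha(h')}\gamma(h')=1^{\alpha(h')}\cdot 1=1$ by $(C2)$, and the inverse case follows from $\gamma(1)=\gamma(h)^{\alpha(h^{-1})}\gamma(h^{-1})=\gamma(h^{-1})$. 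Claims (ii) and (iv) are verbatim copies of (i) and (iii) with $(A3),(A4)$ replacing $(A1),(A2)$ and the roles of $H,K$ exchanged, so I would simply note the symmetry.

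For (v) and (vi) the decisive observation is that $(A7)$ is exactly the statement that the assignment $(h,k)\mapsto\bigl(\alpha(h)(\gamma(h)\cdot\beta(k)),\,\gamma(h)^{\beta(k)}\delta(k)\bigr)$ is a bijection of $G$; in particular preimages are unique. Since $(h,k)=(1,1)$ maps to $(1,1)$, to prove (v) I take $h\in\ker(\alpha)\cap\ker(\gamma)$ and set $k=1$: using $\alpha(h)=\gamma(h)=1$ and $\beta(1)=\delta(1)=1$, together with $(C1)$ and $(C2)$, the pair $(h,1)$ also maps to $(1,1)$, so uniqueness forces $h=1$. For (vi) I instead fix $h=1$ and let $k\in\ker(\beta)\cap\ker(\delta)$; the same computation, now using $\gamma(1)=1$ and $1^{\beta(k)}=1$ from $(C2)$, shows $(1,k)$ maps to $(1,1)$, whence $k=1$.

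The only genuinely delicate point is the treatment of inverses in (i) and (ii): because $\alpha$ and $\beta$ are not homomorphisms, the identities $(A1),(A3)$ carry an extra action term $\gamma(h)\cdot(-)$ (respectively $\delta(k)\cdot(-)$), and cancelling it cleanly requires acting by the appropriate inverse and invoking $(C3)$ and $(C1)$ --- the same manoeuvre already used in the Proposition --- rather than ordinary group cancellation. The corresponding steps for $\gamma$ and $\delta$ in (iii) and (iv) are easier, since the exponent term collapses immediately via $1^{x}=1$ from $(C2)$. For (v)--(vi), all the substance lies in reading $(A7)$ as an injectivity statement; once that is in hand the verifications reduce to repeated use of $(C1)$ and $(C2)$.
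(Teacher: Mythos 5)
Your proposal is correct, and for parts (i)--(iv) it is essentially the paper's own argument: closure under products follows from $(A1)$--$(A4)$ together with $(C2)$, exactly as you do it. The only cosmetic difference there is the inverse step: the paper expands $\alpha(h^{-1}h)$, so that the unknown action $\gamma(h^{-1})$ falls on $\alpha(h)=1$ and collapses by $(C2)$ alone, whereas you expand $\alpha(hh^{-1})$ and must then undo the action of $\gamma(h)$ via $(C3)$ and $(C1)$; both manoeuvres are valid. The genuine divergence is in (v) and (vi). The paper argues that $\alpha(h)=1=\gamma(h)$ gives $\theta(h)=1$ and then concludes ``since $\theta\in Aut(G)$, $h=1$.'' But at that point the correspondence between matrices in $\mathcal{A}$ and automorphisms has not yet been established; worse, the proof of the subsequent theorem invokes this lemma's parts (v) and (vi) precisely in order to show $\ker(\theta)=\{1\}$ when building an automorphism from a matrix, so the paper's justification is circular in exactly the direction where the lemma is needed. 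Your route --- reading the uniqueness clause of $(A7)$ as injectivity of the assignment $(h,k)\mapsto\bigl(\alpha(h)(\gamma(h)\cdot\beta(k)),\,\gamma(h)^{\beta(k)}\delta(k)\bigr)$ and checking, via the Proposition and $(C1)$--$(C2)$, that $(h,1)$ (respectively $(1,k)$) maps to $(1,1)$ just as $(1,1)$ does --- stays entirely inside the axioms $(A1)$--$(A7)$ and is therefore the argument the paper actually needs; it both repairs the circularity and is no longer than the paper's version.
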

	\begin{proof}
		\begin{itemize}
		\item[$(i)$] Let $h$, $h^{\prime}\in \ker(\alpha)$. Then using $(A1)$ and $(C2)$, $\alpha(hh^{\prime}) = \alpha(h)(\gamma(h)\cdot \alpha(h^{\prime})) = \gamma(h)\cdot 1 = 1$. Also, $1= \alpha(1) = \alpha(h^{-1}h) = \alpha(h^{-1})(\gamma(h^{-1})\cdot 1)$. Thus, $\alpha(h^{-1}) = 1$. Hence, $hh^{\prime}$, $h^{-1}\in \ker(\alpha)$ and so $\ker(\alpha)\le H$.
			\item[$(ii)$] One can easily prove it using the similar argument as in the part $(i)$.
			\item[$(iii)$] Let $h$, $h^{\prime} \in \ker(\gamma)$. Then using $(A2)$ and $(C2)$, $\gamma(hh^{\prime}) = \gamma(h)^{\alpha(h^{\prime})} \gamma(h^{\prime})$ $= 1^{\alpha(h^{\prime})} = 1$. Also, $1 = \gamma(1) = \gamma(hh^{-1}) = \gamma(h)^{\alpha(h^{-1})} \gamma(h^{-1})$. Then, $\gamma(h^{-1}) = 1$ and so, $hh^{\prime}$, $h^{-1} \in \ker(H)$. Hence, $\ker(\gamma) \le H$.
			\item[$(iv)$] One can easily prove it using the similar argument as in the part $(iii)$.
			\item[$(v)$] Let $h\in \ker(\alpha)\cap \ker(\gamma)$. Then $\alpha(h)= 1 = \gamma(h)$. Therefore, $\theta(h) = 1$. Since $\theta \in Aut(G)$, $h=1$. Hence, $(v)$ holds.
			\item[$(vi)$] One can easily prove it using the similar argument as in the part $(v)$.
		\end{itemize}
	\end{proof}
	
	\begin{theorem}
		Let $G = H\bowtie K$ be the Zappa-Sz\'{e}p product of two groups $H$ and $K$, and $\mathcal{A}$ be as above. Then there is an isomorphism of groups between $Aut(G)$ and $\mathcal{A}$ given by $\theta \longleftrightarrow \begin{pmatrix}
		\alpha & \beta\\
		\gamma & \delta
		\end{pmatrix}$, where $\theta(h) =  \alpha(h)\gamma(h)$ and $\theta(k) = \beta(k)\delta(k)$, for all $h\in H$ and $k\in K$.
	\end{theorem}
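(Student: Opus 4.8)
The plan is to establish a bijective correspondence between $\mathrm{Aut}(G)$ and $\mathcal{A}$ and then show it respects the group operations. First I would take an arbitrary automorphism $\theta \in \mathrm{Aut}(G)$. Since every element of $G$ is written uniquely as $hk$ with $h \in H$, $k \in K$ (the internal Zappa-Sz\'{e}p decomposition), the images $\theta(h)$ and $\theta(k)$ again lie in $G$ and so factor uniquely. I would therefore \emph{define} the four component maps by writing $\theta(h) = \alpha(h)\gamma(h)$ with $\alpha(h) \in H$, $\gamma(h) \in K$ (for $h \in H$), and $\theta(k) = \beta(k)\delta(k)$ with $\beta(k) \in H$, $\delta(k) \in K$ (for $k \in K$). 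The uniqueness of the factorization guarantees these four maps are well defined. This produces the matrix $\begin{pmatrix} \alpha & \beta \\ \gamma & \delta \end{pmatrix}$ associated to $\theta$.

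The core verification is that this matrix actually lies in $\mathcal{A}$, i.e.\ that conditions $(A1)$--$(A7)$ hold. Conditions $(A1)$ and $(A2)$ should fall out by applying $\theta$ to the product $hh'$ in $H$: one computes $\theta(hh') = \theta(h)\theta(h') = \alpha(h)\gamma(h)\alpha(h')\gamma(h')$, then uses the defining multiplication $(h,k)(h',k') = (h(k\cdot h'),\, k^{h'}k')$ to push $\gamma(h)$ past $\alpha(h')$, producing exactly $\alpha(h)(\gamma(h)\cdot\alpha(h'))$ in the $H$-slot and $\gamma(h)^{\alpha(h')}\gamma(h')$ in the $K$-slot; matching slots against $\theta(hh') = \alpha(hh')\gamma(hh')$ gives $(A1)$ and $(A2)$. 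Conditions $(A3)$ and $(A4)$ are the symmetric computation applied to a product $kk'$ in $K$. Conditions $(A5)$ and $(A6)$ come from applying $\theta$ to a \emph{mixed} product $kh$: since $kh = (k\cdot h)(k^h)$ in $G$, comparing $\theta(k)\theta(h)$ with $\theta(k\cdot h)\,\theta(k^h)$ and equating $H$- and $K$-components yields the two identities. Finally $(A7)$ is precisely the statement that $\theta$ is surjective (equivalently bijective): every $h'k' \in G$ has a preimage $hk$, and writing $\theta(hk) = \theta(h)\theta(k) = \alpha(h)\gamma(h)\beta(k)\delta(k)$ and reducing via the multiplication rule gives the $H$-component $\alpha(h)(\gamma(h)\cdot\beta(k))$ and $K$-component $\gamma(h)^{\beta(k)}\delta(k)$, whose equality with $(h',k')$ for a unique $(h,k)$ is exactly the bijectivity of $\theta$.

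Conversely, given a matrix in $\mathcal{A}$, I would define $\theta$ on $G$ by $\theta(hk) = \alpha(h)(\gamma(h)\cdot\beta(k))\,\gamma(h)^{\beta(k)}\delta(k)$, which is the formula dictated by $\theta(h)\theta(k)$ after reducing the product, and check it is a homomorphism and a bijection. Homomorphism follows by a direct (if lengthy) computation verifying $\theta(hk\cdot h'k') = \theta(hk)\theta(h'k')$, where conditions $(A1)$--$(A6)$ are exactly what is needed to make the two sides agree after expanding with $(C1)$--$(C6)$ and the multiplication rule; bijectivity is supplied by $(A7)$. These two constructions are mutually inverse by the uniqueness of the decomposition, establishing a bijection between $\mathrm{Aut}(G)$ and $\mathcal{A}$.

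It remains to check the correspondence is a homomorphism of groups, i.e.\ that composition $\theta'\theta$ of automorphisms corresponds to the given matrix product. I would compute $\theta'(\theta(h))$ by first writing $\theta(h) = \alpha(h)\gamma(h)$, applying $\theta'$ to each factor as $\theta'(\alpha(h)) = \alpha'(\alpha(h))\gamma'(\alpha(h))$ and $\theta'(\gamma(h)) = \beta'(\gamma(h))\delta'(\gamma(h))$, then reducing the resulting four-fold product via the multiplication rule; matching the $H$- and $K$-components against the $(1,1)$ and $(2,1)$ entries $\alpha'\alpha + \gamma'\alpha\cdot\beta'\gamma$ and $(\gamma'\alpha)^{\beta'\gamma} + \delta'\gamma$ of the matrix product confirms the first column, and the analogous computation on $\theta'(\theta(k))$ confirms the second. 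The main obstacle I anticipate is purely computational rather than conceptual: the mixed conditions $(A5)$, $(A6)$ and the homomorphism check require careful bookkeeping of the interleaved $\sigma$- and $\theta$-actions (the $k\cdot h$ and $k^h$ operations), and one must repeatedly invoke $(C3)$--$(C6)$ in the right order to collapse the expressions, so the risk is an indexing slip rather than a genuine difficulty.
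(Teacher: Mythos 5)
Your proposal is correct and follows essentially the same route as the paper: extract the four component maps from the unique $HK$-factorization of $\theta(h)$ and $\theta(k)$, derive $(A1)$--$(A4)$ from products inside $H$ and inside $K$, derive $(A5)$--$(A6)$ from the mixed product $kh=(k\cdot h)k^{h}$, tie $(A7)$ to bijectivity, reverse the construction for the converse direction, and verify that composition of automorphisms matches the matrix product entrywise. The only minor deviation is that you obtain injectivity of the reconstructed $\theta$ directly from the uniqueness clause of $(A7)$, whereas the paper runs a kernel argument using Lemma \ref{lk1} $(v)$--$(vi)$; both are valid, and yours is if anything slightly more economical.
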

	\begin{proof}
		
		\n	Let $\theta \in Aut(G)$ be defined by $\theta(h) = \alpha(h)\gamma(h)$ and $\theta(k) = \beta(k)\delta(k)$, for all $h\in H$ and $k\in K$. Now, for all $h,h^{\prime}\in H$, $\theta(hh^{\prime}) = \theta(h)\theta(h^{\prime}) = \alpha(h)\gamma(h)\alpha(h^{\prime})\gamma(h^{\prime}) = \alpha(h)(\gamma(h)\cdot \alpha(h^{\prime}))\gamma(h)^{\alpha(h^{\prime})}\gamma(h^{\prime})$. Thus, $\alpha(hh^{\prime})\gamma(hh^{\prime}) = (\alpha(h)(\gamma(h)\cdot \alpha(h^{\prime})))(\gamma(h)^{\alpha(h^{\prime})}\gamma(h^{\prime}))$. Therefore, by uniqueness of representation, we have $(A1)$ and $(A2)$. By the similar argument, we get $(A3)$ and $(A4)$. 
		
		\vspace{.2cm}
		
\n 	Now, $\theta(kh) = \theta((k\cdot h)(k^{h})) = \theta(k\cdot h)\theta(k^{h}) = \alpha(k\cdot h)\gamma(k\cdot h)\beta(k^{h})\delta(k^{h}) = \alpha(k\cdot h)(\gamma(k\cdot h)\cdot \beta(k^{h}))\gamma(k\cdot h)^{\beta(k^{h})}\delta(k^{h})$. Also, $\theta(kh) = \theta(k)\theta(h) = \beta(k)\delta(k)$ $\alpha(h)\gamma(h)$ $= \beta(k)(\delta(k)\cdot \alpha(h))\delta(k)^{\alpha(h)}\gamma(h)$. Therefore, by the uniqueness of representation, $\beta(k)(\delta(k)\cdot \alpha(h)) = \alpha(k\cdot h)(\gamma(k\cdot h)\cdot \beta(k^{h}))$ and $\delta(k)^{\alpha(h)}\gamma(h) = \gamma(k\cdot h)^{\beta(k^{h})}\delta(k^{h})$, which proves $(A5)$ and $(A6)$. Finally, $(A7)$ holds because $\theta$ is onto. Thus, to every $\theta \in Aut(G)$ we can associate the matrix $\begin{pmatrix}
		\alpha & \beta\\
		\gamma & \delta
		\end{pmatrix} \in \mathcal{A}$. This defines a map $T: Aut(G)\longrightarrow \mathcal{A}$ given by $\theta \longmapsto \begin{pmatrix}
		\alpha & \beta\\
		\gamma & \delta
		\end{pmatrix}$. 
		
		\vspace{.2cm}
		
		\n	Now, if $\begin{pmatrix}
		\alpha & \beta\\
		\gamma & \delta
		\end{pmatrix} \in \mathcal{A}$ satisfying the conditions $(A1)-(A7)$, then we associate to it, the map $\theta : G\longrightarrow G$ defined by $\theta(h) = \alpha(h)\gamma(h)$ and $\theta(k) = \beta(k)\delta(k)$, for all $h\in H$ and $k\in K$.
		Using $(A1)-(A6)$, one can check that $\theta$ is an endomorphism of $G$. Also, by $(A7)$, the map $\theta$ is onto. Now, let $hk\in \ker(\theta)$. Then $\theta(hk) = 1$. Therefore, $\alpha(h)(\gamma(h)\cdot \beta(k))\gamma(h)^{\beta(k)}\delta(k) = 1$ and so, by the uniqueness of representation $\alpha(h)(\gamma(h)\cdot \beta(k)) = 1$ and $\gamma(h)^{\beta(k)}\delta(k) = 1$. Again, by the uniqueness of representation and using $(C1), (C2), (C3)$ and $(C6)$, we get $\alpha(h) = 1 = \gamma(h)$ and $\beta(k) = 1 = \delta(k)$. Therefore, by the Lemma \ref{lk1} $(v)$ and $(vi)$, $h = 1 = k$ and so, $\ker(\theta) = \{1\}$. Thus, $\theta$ is one-one and hence, $\theta \in Aut(G)$. Thus, $T$ is a bijection. Let $\alpha,\beta, \gamma$ and $\delta$ be the maps associated with $\theta$ and $\alpha^{\prime},\beta^{\prime}, \gamma^{\prime}$ and $\delta^{\prime}$ be the maps associated with $\theta^{\prime}$. Now, for all $h\in H$ and $k\in K$, we have
		\begin{eqnarray*}
			\theta^{\prime}\theta(h) &=& \theta^{\prime}(\alpha(h)\gamma(h))\\
			&=& \alpha^{\prime}(\alpha(h))\gamma^{\prime}(\alpha(h))\beta^{\prime}(\gamma(h))\delta^{\prime}(\gamma(h))\\
			&=& \alpha^{\prime}(\alpha(h))(\gamma^{\prime}(\alpha(h))\cdot\beta^{\prime}(\gamma(h)))\gamma^{\prime}(\alpha(h))^{\beta^{\prime}(\gamma(h))}\delta^{\prime}(\gamma(h))\\
			&=& (\alpha^{\prime}\alpha+(\gamma^{\prime}\alpha\cdot\beta^{\prime}\gamma))(h) ((\gamma^{\prime}\alpha)^{\beta^{\prime}\gamma}+\delta^{\prime}\gamma)(h).
		\end{eqnarray*}
		\n Therefore, if we write $hk$ as $\begin{pmatrix}
		h\\
		k
		\end{pmatrix}$, then $\theta^{\prime}\theta(h) = \begin{pmatrix}
		\alpha^{\prime}\alpha+(\gamma^{\prime}\alpha\cdot\beta^{\prime}\gamma)\\
		(\gamma^{\prime}\alpha)^{\beta^{\prime}\gamma}+\delta^{\prime}\gamma
		\end{pmatrix}\begin{pmatrix}
		h\\
		1
		\end{pmatrix}$. By the similar argument, $\theta^{\prime}\theta(k) = \begin{pmatrix}
		\alpha^{\prime}\beta + \gamma^{\prime}\beta \cdot \beta^{\prime}\delta\\
		(\gamma^{\prime}\beta)^{\beta^{\prime}\delta}+ \delta^{\prime}\delta
		\end{pmatrix}\begin{pmatrix}
		1\\
		k
		\end{pmatrix}$. Thus,\\ $\theta^{\prime}\theta(hk) = \begin{pmatrix}
		\alpha^{\prime}\alpha+(\gamma^{\prime}\alpha\cdot\beta^{\prime}\gamma) & \alpha^{\prime}\beta + (\gamma^{\prime}\beta \cdot \beta^{\prime}\delta)\\
		(\gamma^{\prime}\alpha)^{\beta^{\prime}\gamma}+\delta^{\prime}\gamma & (\gamma^{\prime}\beta)^{\beta^{\prime}\delta}+ \delta^{\prime}\delta
		\end{pmatrix}\begin{pmatrix}
		h\\
		k
		\end{pmatrix}$. Therefore, $T(\theta^{\prime}\theta) = \begin{pmatrix}
		\alpha^{\prime}\alpha+(\gamma^{\prime}\alpha\cdot\beta^{\prime}\gamma) & \alpha^{\prime}\beta + (\gamma^{\prime}\beta \cdot \beta^{\prime}\delta)\\
		(\gamma^{\prime}\alpha)^{\beta^{\prime}\gamma}+\delta^{\prime}\gamma & (\gamma^{\prime}\beta)^{\beta^{\prime}\delta}+ \delta^{\prime}\delta
		\end{pmatrix}=T(\theta)T(\theta^{\prime})$. Hence, $T$ is an isomorphism of groups.
	\end{proof}
	\n From here on, we will identify the automorphisms of $G$ with the matrices in $\mathcal{A}$. Let 
\begin{align*}
P = & \{\alpha\in Aut(H) \;|\; k\cdot \alpha(h) = \alpha(k\cdot h)\; \text{and}\; k^{\alpha(h)} = k^{h}\},\\
Q = & \{\beta\in Map(K,H)\;|\; \beta(kk^{\prime}) = \beta(k)(k\cdot \beta(k^{\prime})), k = k^{\beta(k^{\prime})}, \beta(k) = \beta(k^{h})\},\\
R = & \{\gamma \in Map(H,K) \;|\; \gamma(hh^{\prime}) = \gamma(h)^{h^{\prime}}\gamma(h^{\prime}), h^{\prime} = \gamma(h)\cdot h^{\prime}, \gamma(k\cdot h) = \gamma(h)\}, \\
S = & \{\delta\in Aut(K)\;|\; \delta(k)\cdot h = k\cdot h, \delta(k)^{h} = \delta(k^{h})\},\\
X =& \{(\alpha,\gamma,\delta)\in Map(H,H)\times Map(H,K)\times Aut(K)\;|\; \alpha(hh^{\prime}) = \alpha(h)(\gamma(h)\cdot \alpha(h^{\prime})),\\& \gamma(hh^{\prime}) = \gamma(h)^{\alpha(h^{\prime})}\gamma(h^{\prime}), \delta(k)\cdot \alpha(h) = \alpha(k\cdot h), \delta(k)^{\alpha(h)}\gamma(h) = \gamma(k\cdot h)\delta(k^{h}) \},\\
Y =& \{(\alpha,\beta,\delta)\in Aut(H)\times Map(K,H)\times Map(K,K)\;|\; \beta(kk^{\prime}) = \beta(k)(\delta(k)\cdot \beta(k^{\prime})),\\& \delta(kk^{\prime}) = \delta(k)^{\beta(k^{\prime})}\delta(k^{\prime}), \beta(k)(\delta(k)\cdot \alpha(h)) = \alpha(k\cdot h)\beta(k^{h}), \delta(k)^{\alpha(h)} = \delta(k^{h}) \},\\ 
Z =& \{(\alpha,\delta)\in Aut(H)\times Aut(K)\;|\; \delta(k)\cdot \alpha(h) = \alpha(k\cdot h), \delta(k)^{\alpha(h)} = \delta(k^{h})\}.	
\end{align*}
\n Then one can easily check that $P$, $S$, $X$, $Y$ and $Z$ are all subgroups of the group $Aut(G)$. But $Q$ and $R$ need not be subgroup of the group $Aut(G)$. However, if $H$ and $K$ are abelian groups, then $Q$ and $R$ are subgroups of $Aut(G)$.
\n Let
\begin{center}
	$\begin{matrix}
	A = \left\{\begin{pmatrix}
	\alpha & 0\\
	0 & 1
	\end{pmatrix}|\; \alpha\in P\right\}, & B = \left\{\begin{pmatrix}
	1 & \beta\\
	0 & 1
	\end{pmatrix}|\; \beta\in Q\right\},\\
	C = \left\{\begin{pmatrix}
	1 & 0 \\
	\gamma & 1
	\end{pmatrix}|\; \gamma\in R\right\}, & D = \left\{\begin{pmatrix}
	1 & 0\\
	0 & \delta
	\end{pmatrix}|\; \delta\in S\right\}\\
	E = \left\{\begin{pmatrix}
	\alpha & 0\\
	\gamma & \delta
	\end{pmatrix}|\; (\alpha,\gamma,\delta)\in X\right\}, & F = \left\{\begin{pmatrix}
	\alpha & \beta\\
	0 & \delta
	\end{pmatrix}|\; (\alpha,\beta,\delta)\in Y\right\},\\
	M = \left\{\begin{pmatrix}
	\alpha & 0\\
	0 & \delta
	\end{pmatrix}|\; (\alpha,\delta)\in Z\right\}.
	\end{matrix}$
\end{center}

\n be the corresponding subsets of $\mathcal{A}$. Then one can easily check that $A$, $D$, $E$, $F$ and $M$ are subgroups of $\mathcal{A}$, and if $H$ and $K$ are abelian groups, then $B$ and $C$ are also subgroups of $\mathcal{A}$.  

\begin{theorem}\label{s2t1}
Let $G=H\bowtie K$ be the Zappa-Sz\'{e}p product of two abelian groups $H$ and $K$ and $A,B,C,$ and $D$ be defined as above. Then $ABCD \subseteq \mathcal{A}$. Further, if $1-\beta\gamma \in P$, then $ABCD = \mathcal{A}$. Therefore, $Aut(G) \simeq ABCD$.
\end{theorem}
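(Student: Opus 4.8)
The plan is to read the identity $ABCD=\mathcal{A}$ as a Gauss-type factorization of the matrices in $\mathcal{A}$ into the four ``elementary'' types $A,B,C,D$, in exact analogy with writing an invertible matrix as a product of diagonal and unitriangular pieces. The inclusion $ABCD\subseteq\mathcal{A}$ is the easy half: since $H$ and $K$ are abelian, $A,B,C,D$ are all subgroups of $\mathcal{A}$ (as already noted), and $\mathcal{A}$ is closed under its operation, so any product of elements of $A,B,C,D$ again lies in $\mathcal{A}$. To make the reverse inclusion tractable I would first record, as a computational lemma, the closed form of the general product $\begin{pmatrix}\alpha&0\\0&1\end{pmatrix}\begin{pmatrix}1&\beta\\0&1\end{pmatrix}\begin{pmatrix}1&0\\\gamma&1\end{pmatrix}\begin{pmatrix}1&0\\0&\delta\end{pmatrix}$. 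Multiplying out with the stated binary operation and simplifying by $(C1),(C2)$ and commutativity yields $\begin{pmatrix}\alpha+\alpha\beta\gamma&\alpha\beta\delta\\\gamma&\delta\end{pmatrix}$, where $\alpha\beta\gamma$ and $\alpha\beta\delta$ are ordinary compositions and $\alpha+\alpha\beta\gamma=\alpha\circ(\mathrm{id}_H+\beta\gamma)$ because $\alpha$ is a homomorphism and $H$ is abelian. Thus $ABCD$ is precisely the set of matrices of this shape with $\alpha\in P,\ \beta\in Q,\ \gamma\in R,\ \delta\in S$.

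For the reverse inclusion $\mathcal{A}\subseteq ABCD$ I would take an arbitrary $\begin{pmatrix}\bar\alpha&\bar\beta\\\bar\gamma&\bar\delta\end{pmatrix}\in\mathcal{A}$ and solve the four entrywise equations $\bar\alpha=\alpha+\alpha\beta\gamma$, $\bar\beta=\alpha\beta\delta$, $\bar\gamma=\gamma$, $\bar\delta=\delta$ for the factors. The first two are coupled through $\alpha$ and $\beta$, so the key device is to decouple them: set $\gamma:=\bar\gamma$, $\delta:=\bar\delta$, and $\mu:=\bar\beta\bar\delta^{-1}$, so that necessarily $\alpha\beta=\mu$ and the unknown $\alpha$ disappears from $\alpha\beta\gamma=\mu\bar\gamma$. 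The $(1,1)$-equation then reads $\bar\alpha=\alpha\cdot(\mu\bar\gamma)$ in $Map(H,H)$, which I would solve pointwise as $\alpha(h)=\bar\alpha(h)\,\mu(\bar\gamma(h))^{-1}$, and finally put $\beta:=\alpha^{-1}\mu$. By the computational lemma, the product of this candidate quadruple $(\alpha,\beta,\gamma,\delta)$ is the given matrix, so everything reduces to membership of the constructed maps in $P,Q,R,S$.

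That verification is the heart of the matter, and the hypothesis $1-\beta\gamma\in P$ is exactly what powers it. Rewriting the $(1,1)$-relation as $\bar\alpha=\alpha\circ(\mathrm{id}_H+\beta\gamma)$, one sees that the pointwise-defined $\alpha$ is a genuine automorphism of $H$ lying in $P$ precisely when the elementary map $\mathrm{id}_H+\beta\gamma$ — whose inverse the statement records as $1-\beta\gamma$ — is invertible and belongs to $P$; this is the mechanism by which the non-homomorphism defect of $\bar\alpha$ coming from $(A1)$ is absorbed into the composition and $\alpha$ comes out simultaneously bijective, multiplicative, and compatible with both matched-pair actions. The remaining memberships $\gamma=\bar\gamma\in R$, $\delta=\bar\delta\in S$, and $\beta\in Q$ I would extract from $(A1)$--$(A7)$ together with commutativity of $H$ and $K$, which (in the abelian case) degrade the twisted cocycle relations $(A2),(A4)$ toward the untwisted homomorphism-type identities defining $R$ and $S$; establishing the triviality-of-action clauses $\gamma(h)\cdot h'=h'$ and $k^{\beta(k')}=k$ is the delicate part and again leans on the compatibility packaged in $1-\beta\gamma\in P$. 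I expect the $\alpha\in P$ step to be the genuine obstacle, the other three memberships being comparatively routine once it is settled. With all four in hand, the isomorphism $T$ of the previous theorem transports the set equality $\mathcal{A}=ABCD$ to the group level, yielding $Aut(G)\simeq ABCD$.
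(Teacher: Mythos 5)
Your strategy is the paper's own Gauss-type factorization, and its computational core is right: the closed form $\begin{pmatrix}\alpha+\alpha\beta\gamma & \alpha\beta\delta\\ \gamma & \delta\end{pmatrix}$ for a product in $ABCD$ is correct, and your solved factors ($\gamma=\bar\gamma$, $\delta=\bar\delta$, $\mu=\bar\beta\bar\delta^{-1}$, $\alpha=\bar\alpha-\mu\bar\gamma$, $\beta=\alpha^{-1}\mu$) are exactly the paper's decomposition via $\hat\beta=\bar\alpha^{-1}\bar\beta\bar\delta^{-1}$ and its factorization of $\begin{pmatrix}1 & \hat\beta\\ \bar\gamma & 1\end{pmatrix}$ through $ABC$. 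The genuine gap is the step you defer: proving that the constructed maps lie in $P,Q,R,S$. This cannot be ``extracted from $(A1)$--$(A7)$ together with commutativity.'' Those axioms give only the twisted relations (e.g.\ $(A2)$: $\gamma(hh')=\gamma(h)^{\alpha(h')}\gamma(h')$), whereas $R$ and $S$ demand the untwisted relations plus the action-triviality clauses ($\gamma(hh')=\gamma(h)^{h'}\gamma(h')$, $h'=\gamma(h)\cdot h'$, $\delta(k)\cdot h = k\cdot h$, etc.), and $(A1)$--$(A7)$ do not even force $\bar\alpha$ or $\bar\delta$ to be bijections, which your definitions of $\mu$ and $\beta=\alpha^{-1}\mu$ presuppose: for the trivial matched pair $G=\mathbb{Z}_p\times\mathbb{Z}_p$ the coordinate swap lies in $\mathcal{A}$ with $\bar\alpha=\bar\delta=0$. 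Nor can the hypothesis $1-\beta\gamma\in P$ supply this, since it is a condition on elements of $Q\times R$, not on the entries of an arbitrary element of $\mathcal{A}$. The paper never attempts your step: its proof opens with ``let $\alpha\in P$, $\beta\in Q$, $\gamma\in R$ and $\delta\in S$,'' i.e.\ it factors only those matrices whose data already come from $P,Q,R,S$; in the applications (Theorems \ref{s3t1} and \ref{s4t1}) these memberships are supplied by the group-specific structure Lemmas \ref{l2}, \ref{l3} and \ref{s4l4}, not by the present theorem. So what you postpone as a routine verification is precisely the point where a proof of the literal, unrestricted statement stalls.

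A second, smaller error: you read $1-\beta\gamma$ as the compositional inverse of $\mathrm{id}_H+\beta\gamma$. It is not: $1-\beta\gamma$ is the map $h\mapsto h(\beta\gamma(h))^{-1}$, and when $\beta\gamma$ is an endomorphism one has $(\mathrm{id}+\beta\gamma)\circ(\mathrm{id}-\beta\gamma)=\mathrm{id}-(\beta\gamma)^{2}$, which is the identity only if $(\beta\gamma)^{2}=0$. The paper needs no such inversion: it takes $1-\hat\beta\gamma$ itself as the $A$-factor and $(1-\hat\beta\gamma)^{-1}\hat\beta$ as the $B$-factor, so the required $(1,1)$-identity is the tautological $(1-\hat\beta\gamma)+\hat\beta\gamma=1$; the hypothesis enters only to guarantee that $1-\hat\beta\gamma$ is a legitimate element of $A$ with a compositional inverse in $P$, whence $(1-\hat\beta\gamma)^{-1}\hat\beta\in Q$ by the same closure property behind the paper's remark that $\alpha\beta\delta\in Q$. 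If you grant, as the paper implicitly does, that $\bar\alpha\in P$, $\bar\gamma\in R$, $\bar\delta\in S$ and $\hat\beta\in Q$, then your $\alpha=\bar\alpha-\mu\bar\gamma$ equals $\bar\alpha\circ(1-\hat\beta\bar\gamma)$, a composite of two elements of $P$, and your construction coincides with the paper's; without that grant, the statement as you have set it up remains unproved.
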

\begin{proof}
	Note that $A$ and $D$ normalizes $B$ and $C$. Then $ABCD$ is a subgroup of $Aut(G)$. Clearly, $ABCD \subseteq \mathcal{A}$. Now, let $\alpha\in P$, $\beta \in Q$, $\gamma\in R$ and $\delta \in S$. Then note that, $\alpha\beta\delta\in Q$ and $\begin{pmatrix}
		1 & \beta\\
		\gamma & 1
	\end{pmatrix} \in \mathcal{A}$. Further, let us assume that $1-\beta\gamma \in P$. Now, if $\hat{\beta} = \alpha^{-1}\beta\delta^{-1}$, then
	\[\begin{pmatrix}
	1 & \hat{\beta}\\
	\gamma & 1
	\end{pmatrix}  = \begin{pmatrix}
	1-\hat{\beta}\gamma & 0\\
	0 & 1
	\end{pmatrix} \begin{pmatrix}
	1 & (1-\hat{\beta}\gamma)^{-1}\hat{\beta}\\
	0 & 1
	\end{pmatrix} \begin{pmatrix}
	1 & 0\\
	\gamma & 1
	\end{pmatrix} \in ABC.\]
	Thus, if $\begin{pmatrix}
	\alpha & \beta\\
	\gamma & \delta
	\end{pmatrix}\in \mathcal{A}$, then 
	\[\begin{pmatrix}
	\alpha & \beta\\
	\gamma & \delta
	\end{pmatrix} = \begin{pmatrix}
	\alpha & 0\\
	0 & 1
	\end{pmatrix} \begin{pmatrix}
	1 & \hat{\beta}\\
	\gamma & 1
	\end{pmatrix} \begin{pmatrix}
	1 & 0\\
	0 & \delta
	\end{pmatrix}\in A(ABC)D = ABCD.\]
	Therefore, $\mathcal{A}\subseteq ABCD$. Hence, $ABCD = \mathcal{A}$ and so, $Aut(G)\simeq \mathcal{A}$.	
\end{proof}
	
	%%%%%%%%%%%%%%%%%%%%%%%%%%%%%%%%%%%%%%%%%%%%%%%%%%%%%%%%%%%%%%%%%%%%%%%%%%%%%%%%%%%%%%%%%%%%%%%%%%%%%%%%%%%%%%%%%%%%%%%%%%%%%%%%%%%%%%%%%%
	
	\section{Automorphisms of Zappa-Sz\'{e}p product of groups $\mathbb{Z}_{4}$ and $\mathbb{Z}_{m}$}
	
	In \cite{y4m}, Yacoub classified the groups which are Zappa-Sz\'{e}p product of cyclic groups of order $4$ and order $m$. He found that these are of the following type (see \cite[Conclusion, p. 126]{y4m})
	
	\begin{align*}
	L_1 = & \langle a,b \;|\; a^{m} = 1 = b^{4}, ab = ba^r, r^4\equiv 1 \Mod{m}\rangle, \\
	L_2 = & \langle a,b \;|\; a^{m} = 1 = b^{4}, ab = b^{3}a^{2t+1}, a^{2}b = ba^{2s}\rangle,
	\end{align*}
	
	\n where in $L_2$, $m$ is even. These are not non-isomorphic classes. The group $L_1$ may be isomorphic to the group $L_2$ depending on the values of $m,r$ and $t$ (see \cite[Theorem 5, p. 126]{y4m}). Clearly, $L_1$ is a semidirect product. Throughout this section $G$ will denote the group $L_2$ and we will be only concerned about groups $L_2$ which are Zappa-Sz\'{e}p product but not the semidirect product. Note that $G=H \bowtie K$, where $H=\langle b \rangle$ and $K=\langle a \rangle$. For the group $G$, the mutual actions of $H$ and $K$ are defined by $a\cdot b = b^{3}, a^{b} = a^{2t+1}$ along with $a^{2}\cdot b = b$ and $(a^{2})^{b} = a^{2s}$, where  $t$ and $s$ are the integers satisfying the conditions
	\begin{itemize}
		\item[$(G1)$] $2s^{2}\equiv 2 \Mod{m}$,
		\item[$(G2)$] $4t(s+1)\equiv 0 \Mod{m}$,
		\item[$(G3)$] $2(t+1)(s-1)\equiv 0 \Mod{m}$,
		\item[$(G4)$] $\gcd(s,\frac{m}{2}) = 1$.
	\end{itemize}

	\begin{lemma}\label{l1}
		\[	(a^{l})^{b} = \left\{\begin{array}{ll}
		a^{2t+1+(l-1)s}, & \text{if}\; l\; \text{is odd}\\
		a^{ls}, & \text{if}\; l\; \text{is even}
		\end{array}\right..\]
	\end{lemma}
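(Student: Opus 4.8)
The plan is to prove both formulas by induction on $l$ in steps of two, so that the parity of $l$ is preserved and the base actions $a^b = a^{2t+1}$ and $(a^2)^b = a^{2s}$ can be fed in at each stage. The engine of the induction is condition $(C4)$ applied to the splitting $a^{l+2} = a^2 a^l$, which gives
\[(a^{l+2})^b = (a^2)^{a^l\cdot b}\,(a^l)^b.\]
To evaluate the right-hand side I first need the scalar action $a^l \cdot b \in H = \langle b\rangle$. Using $(C1)$ together with the relation $a^2 \cdot b = b$ and $(C3)$ in the form $a^{2j}\cdot b = a^2\cdot(a^{2(j-1)}\cdot b)$, a one-line induction shows $a^{2j}\cdot b = b$ for all $j$, and then $(C3)$ with $a\cdot b = b^3$ gives $a^{2j+1}\cdot b = a\cdot(a^{2j}\cdot b) = b^3$. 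Thus the action of $a^l$ on $b$ depends only on the parity of $l$.

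For the even case I would induct on $j$ to show $(a^{2j})^b = a^{2js}$. The displayed recursion with $l = 2j$ becomes $(a^{2j+2})^b = (a^2)^{b}(a^{2j})^b = a^{2s}a^{2js} = a^{2(j+1)s}$, since $a^{2j}\cdot b = b$ and $(a^2)^b = a^{2s}$; the base case $j=1$ is the given value. This part uses none of the numerical constraints $(G1)$--$(G4)$.

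The odd case is where the real work lies, because the recursion now produces $(a^2)^{b^3}$ rather than $(a^2)^b$, and I must show these agree. Here I would first compute $(a^2)^{b^2} = ((a^2)^b)^b = (a^{2s})^b$ by $(C6)$, then invoke the even-case formula already established, with $j = s$, to get $(a^{2s})^b = a^{2s^2}$, and finally use $(G1)$, i.e. $2s^2 \equiv 2 \Mod m$, to conclude $(a^2)^{b^2} = a^2$. Applying $(C6)$ once more yields $(a^2)^{b^3} = ((a^2)^{b^2})^b = (a^2)^b = a^{2s}$. With this in hand, the recursion with $l = 2j+1$ reads $(a^{2j+3})^b = (a^2)^{b^3}(a^{2j+1})^b = a^{2s}\,a^{2t+1+2js} = a^{2t+1+2(j+1)s}$, matching the claimed formula for $l = 2j+3$; the base case $l=1$ is $a^b = a^{2t+1}$. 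The main obstacle is precisely this identity $(a^2)^{b^3} = a^{2s}$, the only place where a defining relation of $G$ (condition $(G1)$) is genuinely needed; once it is secured, both inductions close routinely.
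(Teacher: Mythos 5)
Your proof is correct. There is, however, nothing in the paper to compare it against: the paper states Lemma \ref{l1} with no proof at all, treating it as a routine computation, so your argument genuinely fills a gap rather than paralleling an existing one. Your steps all check out: the parity computation of the scalar action ($a^{l}\cdot b = b$ for $l$ even, $b^{3}$ for $l$ odd, via $(C1)$ and $(C3)$), the even case by induction through $(C4)$, and the odd case via the identity $(a^{2})^{b^{3}} = a^{2s}$, which you correctly derive from $(a^{2})^{b^{2}} = a^{2s^{2}} = a^{2}$ using $(C6)$, the even-case formula, and $(G1)$. One structural remark: the ``main obstacle'' you identify is an artifact of your choice of splitting, not an intrinsic feature of the lemma. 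You write $a^{l+2} = a^{2}a^{l}$, so $(C4)$ forces you to evaluate $(a^{2})^{a^{l}\cdot b} = (a^{2})^{b^{3}}$ when $l$ is odd. If you instead split $a^{l+2} = a^{l}a^{2}$ (the two agree since $K$ is cyclic), then $(C4)$ gives $(a^{l+2})^{b} = (a^{l})^{a^{2}\cdot b}(a^{2})^{b} = (a^{l})^{b}a^{2s}$, because $a^{2}\cdot b = b$; both parity cases then close in one line each, with no appeal to $(G1)$ and no need for the identity $(a^{2})^{b^{2}} = a^{2}$. Finally, a small point you gloss over: when you invoke the even-case formula with $j = s$, the exponent $2s$ should be reduced modulo $m$, so one should observe that the formula is independent of the representative chosen (if $2j \equiv 2j^{\prime} \Mod{m}$ then $2js \equiv 2j^{\prime}s \Mod{m}$); this is immediate, but it is the step that makes the substitution legitimate.
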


 \begin{lemma}\label{l2}
 	Let $\begin{pmatrix}
 	\alpha & \beta\\
 	\gamma & \delta
 	\end{pmatrix}\in \mathcal{A}$. Then
	\begin{itemize}
		
		\item[$(i)$]  $Im(\delta)\subseteq \langle a^{r}\rangle$, where $r$ is odd,
		\item[$(ii)$] $\beta(a^{l}) = \left\{\begin{array}{ll}
		\beta(a),	& \text{if}\; l\; \text{is odd}\\
		1, & \text{if}\; l\; \text{is even}
		\end{array}\right.$,
		\item[$(iii)$] $Im(\gamma)\subseteq \langle a^{2}\rangle$, 
		\item[$(iv)$] $\alpha\in Aut(H)$,
		\item[$(v)$] $\beta\gamma = 0$, where $0$ is the trivial group homomorphism,
		\item[$(vi)$] $\gamma(h)\cdot \beta(k) = \beta(k)$, for all $h\in H$ and $k\in K$,
		\item[$(vii)$] If either $s=1$ or $Im(\beta)\subseteq \langle b^{2}\rangle$, then $\gamma(h)^{\beta(k)} = \gamma(h)$, for all $h\in H$ and $k\in K$.
	\end{itemize}	
\end{lemma}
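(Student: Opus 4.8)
The plan is to pin down the four maps $\alpha,\beta,\gamma,\delta$ by specialising the structural identities $(A1)$–$(A6)$ to the generators $a$ and $b$, reading off each identity's $H$-component and $K$-component separately (by the uniqueness of the factorisation $G=HK$), and feeding in the explicit actions together with Lemma~\ref{l1} and the constraints $(G1)$–$(G4)$. Before anything else I would record the facts that get used constantly: from $a\cdot b=b^{3}=b^{-1}$ and $a^{2}\cdot b=b$ one obtains, via $(C3)$ and $(C5)$, that $a^{l}\cdot b^{c}=b^{-c}$ for $l$ odd and $b^{c}$ for $l$ even (odd powers of $a$ invert $H$, even powers fix it); that $\langle a^{2}\rangle\trianglelefteq G$ acts trivially on $H$; and, from $(G1)$ via Lemma~\ref{l1}, that $(a^{2j})^{b^{2}}=a^{2j}$.

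The crux is to determine, at the level of the generators, the three ``shapes'' $\delta(a)=b^{c}a^{r}$-part $a^{r}$, $\gamma(b)=a^{j}$ and $\alpha(b)=b^{i}$ --- concretely, to show $r$ is odd (part $(i)$), $j$ is even (part $(iii)$) and $i$ is a unit mod $4$ (part $(iv)$). I would isolate this as the heart of the argument, because the coarse invariants do not separate the cases: the verbal subgroup $G^{2}=\langle g^{2}\mid g\in G\rangle$ equals $\langle a^{2},b^{2}\rangle$ (using $\langle a^{2}\rangle\trianglelefteq G$ with $G/\langle a^{2}\rangle\cong D_{4}$), and although $\theta$ preserves $G^{2}$ and hence induces an automorphism of $G/G^{2}\cong\mathbb{Z}_{2}\times\mathbb{Z}_{2}$, this only restricts the exponent parities rather than pinning them down; moreover the order-$4$ condition on $\theta(b)$ is satisfiable for either parity of $j$ depending on $s,t$. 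To break the tie I would use the coupled relations $(A5)$ and $(A6)$ at $k=a,\,h=b$ --- which encode $\theta(a)\theta(b)=\theta(b)^{3}\theta(a)^{2t+1}$ --- together with the bijectivity clause $(A7)$, feeding in Lemma~\ref{l1} and $(G1)$–$(G4)$. I expect this simultaneous generator-level determination to be the main obstacle.

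Once these generator facts are in hand, the remaining parts follow by bookkeeping. From $r$ odd, $(A3)$ at $k=k'=a$ gives $\beta(a^{2})=\beta(a)\bigl(\delta(a)\cdot\beta(a)\bigr)=b^{c}b^{-c}=1$, since an odd power of $a$ inverts $H$; a single-step induction on $(A3)$ then yields $\beta(a^{l})=\beta(a)$ for odd $l$ and $\beta(a^{l})=1$ for even $l$, which is $(ii)$. This induction needs that $\delta(a^{l})$ has the same exponent-parity as $l$, which I would prove in parallel from $(A4)$ and Lemma~\ref{l1}, the $b$-action preserving exponent parity. Part $(iii)$ is then the generator fact $\gamma(b)\in\langle a^{2}\rangle$ extended to all of $H$ through $(A2)$ and the closure of $\langle a^{2}\rangle$ under the action. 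Finally, with $(iii)$ in force every $\gamma(h)$ acts trivially on $H$, so $(A1)$ collapses to $\alpha(hh')=\alpha(h)\alpha(h')$; hence $\alpha$ is an endomorphism of $\mathbb{Z}_{4}$ carrying the generator $b$ to a generator, so $\alpha\in Aut(H)$, giving $(iv)$.

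The last three parts are soft consequences. Part $(v)$ is immediate: by $(iii)$ each $\gamma(h)$ is an even power of $a$, and by $(ii)$ such powers are killed by $\beta$, so $\beta\gamma=0$. Part $(vi)$ holds because $\gamma(h)\in\langle a^{2}\rangle$ acts trivially on $H\ni\beta(k)$. For $(vii)$ I would again use $\gamma(h)\in\langle a^{2}\rangle$: if $s=1$ then by Lemma~\ref{l1} the $b$-action fixes even powers of $a$, so $\gamma(h)^{\beta(k)}=\gamma(h)$ for every $\beta(k)$; if instead $Im(\beta)\subseteq\langle b^{2}\rangle$ then $\beta(k)\in\{1,b^{2}\}$, and the identity $(a^{2j})^{b^{2}}=a^{2j}$ recorded in the first paragraph gives the same conclusion.
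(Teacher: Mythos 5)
Your treatment of parts $(ii)$, $(v)$, $(vi)$ and $(vii)$ is sound and essentially identical to the paper's: once one knows that $\delta(a)$ is an odd power of $a$ and that $Im(\gamma)\subseteq\langle a^{2}\rangle$, the inversion/triviality of the action of odd/even powers of $a$ on $H$ and the identity $(a^{2l})^{b^{2}}=a^{2l}$ do all the remaining work. The genuine gap is that you never establish the generator-level facts on which everything rests. You correctly isolate them --- $\delta(a)=a^{r}$ with $r$ odd (part $(i)$), $\gamma(b)=a^{\lambda}$ with $\lambda$ even (part $(iii)$), and bijectivity of $\alpha$ (part $(iv)$) --- call them ``the heart of the argument'' and ``the main obstacle'', and then offer only a plan: specialise $(A5)$ and $(A6)$ at $k=a$, $h=b$ and invoke $(A7)$. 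That plan is not carried out, and as stated it would not run smoothly: $(A5)$ and $(A6)$ at the generators involve $\alpha(b^{3})$, $\gamma(b^{3})$, $\beta(a^{2t+1})$ and $\delta(a^{2t+1})$, so before those coupled relations can be exploited one must already control how the four maps behave on powers, which is exactly the content of parts $(i)$--$(iv)$. As it stands, the substance of the lemma is unproven.

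What the paper does instead (and what your sketch is missing) is a pair of parity-contradiction arguments using only $(A1)$--$(A4)$ together with $(A7)$, never $(A5)$/$(A6)$. For $(i)$: suppose $\delta(a)=a^{r}$ with $r$ even; then by $(A4)$ and the parity-preservation of the action $k\mapsto k^{h}$, every $\delta(a^{l})$ is an even power of $a$, so by $(A3)$ the map $\beta$ is a homomorphism, and $(A4)$ combined with $(a^{2})^{b}=a^{2s}$ and $(C4)$ yields a closed formula for $\delta(a^{l})$ for each of the four possible values of $\beta(a)$; in every case $\delta$ is neither one-one nor onto, contradicting the bijectivity packaged in $(A7)$, so $r$ must be odd. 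For $(iii)$: suppose $\gamma(b)=a^{\lambda}$ with $\lambda$ odd; then $(A1)$ gives $\alpha(b^{2})=\alpha(b)\alpha(b)^{-1}=1=\alpha(1)$ and $\alpha(b^{3})=\alpha(b)$, so $\alpha$ fails to be a bijection, again contradicting $(A7)$; evenness of $\lambda$ then propagates to $\gamma(b^{2})$ and $\gamma(b^{3})$ through $(A2)$. With these two contradictions supplied, part $(iv)$ follows from $(A1)$ and $(A7)$ as you indicate, and the rest of your argument goes through unchanged.
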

\begin{proof}
	\begin{itemize}
		\item[$(i)$] 	If possible, let $\delta(a) = a^{r}$, where $r$ is even. Then, using $(A3)$ and $a^{2}\cdot b^{j} = b^{j}$, $\beta$ is a homomorphism. Also, using $(a^{2})^{b} = a^{2s}, (C4)$ and $(A4)$, if $\beta(a) = 1$ or $b^{2}$, then $\delta$ is defined by $\delta(a^{l}) = a^{rl}$, for all $l$. Similarly, if $\beta(a) = b$ or $b^{3}$, then $\delta$ is defined by
		\begin{equation*}
		\delta(a^{l}) = \left\{ \begin{array}{ll}
		a^{\frac{l+1}{2}r + \frac{l-1}{2}rs}, & \text{if}\; l\; \text{is odd}\\
		a^{\frac{l}{2}r(s+1)}, & \text{if}\; l\; \text{is even}
		\end{array} \right..
		\end{equation*}
		One can easily observe that $\delta$ is neither one-one nor onto. But this is a contradiction by $(A7)$. Hence, $Im(\delta)\subseteq \langle a^{r}\rangle$, where $r$ is odd.   
		\item[$(ii)$] Using $(C3)$, and $a\cdot b = b^{-1}$, we have if $\nu$ is odd, then $a^{\nu}\cdot b^{j} = b^{-j}$, for all $j$. Thus using $(A3)$, $(C2)$ and part $(i)$, $\beta(a^{2}) = \beta(a)(\delta(a)\cdot \beta(a)) = \beta(a)(\beta(a))^{-1} = 1$ and $\beta(a^{3}) = \beta(a)(\delta(a)\cdot \beta(a^{2})) = \beta(a)(\delta(a)\cdot 1) = \beta(a)$. Inductively, we get the required result. 
		\item[$(iii)$] Suppose that $\gamma(b) = a^{\lambda}$, where $\lambda$ is odd. Then using $(A1)$, $\alpha(b) = b^{i} = \alpha(b^{3})$ and $\alpha(b^{2}) = 1 = \alpha(1)$, where $0\le i \le 3$. Thus the map $\alpha$ is neither one-one nor onto, but by $(A7)$, the map $\alpha$ is a bijection. This is a contradiction. Thus, $\lambda$ is even. Now, using $(A2)$, for different choices of $\alpha(b)$ we find that $\gamma(b^{2})\in \{a^{2\lambda}, a^{\lambda(s+1)}\}$. Since, $\lambda$ is even, $\gamma(b^{2})\in \langle a^{2}\rangle$. Similarly, $\gamma(b^{3})\in \{a^{3\lambda}, a^{\lambda(s+2)}\}$ and so, $\gamma(b^{3})\in \langle a^{2}\rangle$. Hence, $(iii)$ holds. 
		\item[$(iv)$] Using $(iii)$ and $(A1)$, one observes that $\alpha$ is an endomorphism of $H$. Also, by $(A7)$, $\alpha$ is a bijection. Thus, $\alpha$ is an automorphism of $H$. Hence, $(iv)$ holds.
		\item[$(v)$] Using the parts $(ii)$ and $(iii)$, $\beta\gamma(h) = 1$, for all $h\in H$. Thus, $\beta\gamma = 0$.
		\item[$(vi)$] Using the relation $a^{2}\cdot b = b$ and the part $(iii)$, $(vi)$ holds.
		\item[$(vii)$] 	Using $(C4)$ and $(G1)$, we get
		\begin{equation}\label{e2}
		(a^{2l})^{b^{j}} = \left\{\begin{array}{ll}
		a^{2ls}, & \text{if}\; j\; \text{is odd}\\
		a^{2l}, & \text{if}\; j\; \text{is even}
		\end{array} \right..
		\end{equation}
		\n Thus, if either $s=1$ or $Im(\beta)\subseteq \langle b^{2}\rangle$, then using the part $(iii)$ and the Equation (\ref{e2}), $(vii)$ holds.
		
	\end{itemize}
	
\end{proof}

\n By \ref{l2} $(ii)$, observe that, $\beta(k^{h}) = \beta(k)$, for all $k\in K$ and $h\in H$.

	\begin{lemma}\label{l3}
		 Let $\beta\in Q$. Then $\beta\in Hom(K,H)$ and $Im(\beta) \le \langle b^{2}\rangle$. Also, $Im(\beta) = \langle b^{2}\rangle$ if and only if $2t(1+s)\equiv 0 \Mod{m}$, where $\gcd(s+1, \frac{m}{2})\ne 1$. 
		
	\end{lemma}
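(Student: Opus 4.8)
The plan is to pin down $\beta$ from the single value $\beta(a)$ and then read off the constraints imposed by membership in $Q$. By Lemma \ref{l2}$(ii)$ we have $Im(\beta) = \{1, \beta(a)\}$, so everything is governed by $\beta(a) \in H = \langle b\rangle$. The relation $k = k^{\beta(k')}$ in the definition of $Q$, taken with $k' = a$, says precisely that $\beta(a)$ acts trivially on $K$ under $\theta$, i.e. $\beta(a)$ lies in $N := \{h \in H : k^{h} = k \text{ for all } k \in K\}$. Since the $\theta$-action is a (right) action by $(C6)$, $N$ is a subgroup of $H$; as each of $b$ and $b^{3}$ generates $H = \langle b\rangle$, the possibilities $\beta(a) = b$ or $\beta(a) = b^{3}$ would force $b \in N$ and hence $k^{h} = k$ for all $h \in H$, i.e. $\theta$ would be trivial and $G$ a semidirect product, contrary to our standing assumption. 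Thus $\beta(a) \in \{1, b^{2}\} = \langle b^{2}\rangle$, giving $Im(\beta) \le \langle b^{2}\rangle$.

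Next I would upgrade this to $\beta \in Hom(K,H)$. For any $j$ and any $k = a^{\nu} \in K$, the $\sigma$-action on $\langle b^{2}\rangle$ is trivial: if $\nu$ is even then $a^{\nu}\cdot b^{2j} = b^{2j}$ by $a^{2}\cdot b = b$, while if $\nu$ is odd then $a^{\nu}\cdot b^{2j} = b^{-2j} = b^{2j}$ using $a\cdot b = b^{-1}$ (as in the proof of Lemma \ref{l2}$(ii)$) together with $b^{-2} = b^{2}$. Since $\beta(k') \in \langle b^{2}\rangle$, we get $k\cdot\beta(k') = \beta(k')$, so the defining identity $\beta(kk') = \beta(k)(k\cdot\beta(k'))$ collapses to $\beta(kk') = \beta(k)\beta(k')$, proving $\beta$ is a homomorphism.

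For the equivalence, observe that $Im(\beta) = \langle b^{2}\rangle$ holds exactly when $\beta(a) = b^{2}$, which by the above is admissible iff $b^{2} \in N$. I would compute $a^{b^{2}} = (a^{b})^{b} = (a^{2t+1})^{b}$ and apply Lemma \ref{l1} (with $2t+1$ odd) to get $a^{b^{2}} = a^{1 + 2t(1+s)}$, whereas $(a^{2})^{b^{2}} = (a^{2s})^{b} = a^{2s^{2}} = a^{2}$ automatically, using $(G1)$. Hence $b^{2} \in N$ iff $2t(1+s) \equiv 0 \Mod{m}$. Conversely, when this congruence holds the map $a^{l}\mapsto (b^{2})^{l}$ is a well-defined homomorphism $K \to H$ (this is where $m$ even is used), and one checks directly that it satisfies the three defining conditions of $Q$ and has image $\langle b^{2}\rangle$.

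Finally, the clause $\gcd(s+1,\frac{m}{2}) \ne 1$ I would derive from $(G1)$, which gives $s^{2} \equiv 1 \Mod{m/2}$, i.e. $(s-1)(s+1) \equiv 0 \Mod{m/2}$. If instead $\gcd(s+1,\frac{m}{2}) = 1$, then $s \equiv 1 \Mod{m/2}$, so $(a^{2})^{b} = a^{2s} = a^{2}$; moreover $2t(1+s)\equiv 0 \Mod{m}$ would give $(m/2)\mid t$, hence $2t \equiv 0 \Mod{m}$ and $a^{b} = a^{2t+1} = a$. These two facts together make $\theta$ trivial, contradicting non-semidirectness, so $\gcd(s+1,\frac{m}{2})\ne 1$. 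The group-theoretic skeleton — reducing $\beta$ to $\beta(a)$ and to the kernel $N$ — is routine once Lemmas \ref{l1} and \ref{l2} are available; I expect the main obstacle to be the arithmetic bookkeeping across the two moduli $m$ and $m/2$, and invoking the non-semidirect hypothesis at exactly the points where the $\theta$-action must be shown to be nontrivial.
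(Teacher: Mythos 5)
Your proof is correct, and its skeleton is the same as the paper's: reduce $\beta$ to the single value $\beta(a)$ via Lemma \ref{l2}$(ii)$, eliminate $\beta(a)\in\{b,b^{3}\}$ using the standing non-semidirect assumption, characterize $\beta(a)=b^{2}$ by the computation $a^{b^{2}}=(a^{2t+1})^{b}=a^{1+2t(1+s)}$ (exactly the paper's use of $(C6)$ and Lemma \ref{l1}), deduce $\beta\in Hom(K,H)$ from the triviality of the $\sigma$-action on $\langle b^{2}\rangle$, and obtain the clause $\gcd(s+1,\frac{m}{2})\ne 1$ from the same divisibility argument ($2t(1+s)\equiv 0 \Mod{m}$ and $\gcd(s+1,\frac{m}{2})=1$ force $t\equiv 0 \Mod{\frac{m}{2}}$, hence semidirectness). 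The one point of genuine divergence is how you exclude $\beta(a)=b$ and $\beta(a)=b^{3}$: the paper computes $a^{b}=a^{2t+1}$ and $a^{b^{3}}=a^{4t+2ts+1}$ separately, and in the second case must invoke $(G2)$ and $(G4)$ to reach $t\equiv 0 \Mod{\frac{m}{2}}$; you instead observe that $k=k^{\beta(k^{\prime})}$ places $\beta(a)$ in $Stab_{H}(K)$, a subgroup of $H$ by $(C1)$ and $(C6)$, so a generator of $H$ lying there would trivialize the whole $\theta$-action and make $G$ semidirect. This is cleaner: it removes the arithmetic and the appeal to $(G2)$, $(G4)$, and it handles the quantifier ``for all $k$'' in $k=k^{\beta(k^{\prime})}$ uniformly, where the paper tests only $k=a$ (harmless for necessary conditions, but your framing also covers the converse, since $k\mapsto k^{b^{2}}$ is an endomorphism of $K$ because $k\cdot b^{2}=b^{2}$ for all $k$). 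One small redundancy: in your final paragraph, $2t\equiv 0 \Mod{m}$ alone already gives $ab=b^{3}a$, hence $H\triangleleft G$ and semidirectness; the extra derivation of $s\equiv 1 \Mod{\frac{m}{2}}$ is not needed for the contradiction.
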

	\begin{proof}
	Let $\beta(a) = b^{i}$. Then using the Lemma \ref{l2} $(ii)$, we have, $\beta(a^{2j}) = 1$ and $\beta(a^{2j+1}) = b^{i}$, for all $j$. So, it is sufficient to study  only the $\beta(a)$ in the following,
			\begin{equation}\label{e1}
			a = a^{\beta(a)} = a^{b^{i}}.
			\end{equation}
			
		\n	Clearly, the Equation (\ref{e1}) holds trivially for $i=0$. If $i=1$, then by the Equation (\ref{e1}), $a = a^{2t+1}$ which implies that $2t\equiv 0 \Mod{m}$. Therefore, in the defining relations of the group $G$, $ab = b^{3}a$ which shows that $G$ is a semidirect product of the groups $H$ and $K$. For $i = 3$, $a = a^{b^{3}} = a^{4t+2ts+1}$, which gives that $4t+2ts \equiv 0 \Mod{m}$. So, using $(G2)$ and $(G4)$, $2ts \equiv 0 \Mod{m}$ which gives that $t\equiv 0 \Mod{\frac{m}{2}}$. Thus, $G$ is again the semidirect product of $H$ and $K$. Now, For $i=2$, using $(C6)$ and the Lemma \ref{l1}, $a^{b^{2}} = (a^{2t+1})^{b} = a^{2t+1+2ts}$. Then, $a^{b^{2}} = a$ if and only if $2t(1+s)\equiv 0 \Mod{m}$.
			
			\vspace{.2cm}
			
			\n Now, if $\gcd(s+1, \frac{m}{2}) = 1$, then $t\equiv 0 \Mod{\frac{m}{2}}$ and so, $G$ is a semidirect product of the groups $H$ and $K$. On the other hand, if $\gcd(s+1, \frac{m}{2}) \ne 1$, then $t\not\equiv 0 \Mod{\frac{m}{2}}$. Thus, $G$ is a Zappa-Sz\'{e}p product of $H$ and $K$. Thus, $Im(\beta) = \langle b^{2}\rangle$ if and only if $2t(1+s)\equiv 0 \Mod{m}$ and $\gcd(s+1, \frac{m}{2})\ne 1$. Since $Im(\beta)\subseteq \langle b^{2}\rangle$, using the Lemma \ref{l2} $(ii)$, one can easily observe that $\beta\in Hom(K,H)$. Hence, the result holds.
			
	\end{proof} 
	
%	\begin{lemma}\label{l4}
%		\begin{itemize}
%			
%			
%		\end{itemize}
%	\end{lemma}
%	\begin{proof}
%		\begin{itemize}
%		
%		\end{itemize}
%	\end{proof}
\n Now, one can easily observe that for the given group $G$, $k\cdot \alpha(h) = \alpha(k\cdot h)$, $\beta(k) = \beta(k^{h})$, $h^{\prime} = \gamma(h)\cdot h^{\prime}$, $\delta(k)\cdot h = k\cdot h$, $\delta(k)\cdot \alpha(h) = \alpha(k\cdot h)$ and $\beta(k)(\delta(k)\cdot \alpha(h)) = \alpha(k\cdot h)\beta(k^{h})$  always holds for all $\alpha\in P$, $\beta\in Q$, $\gamma \in R$, $\delta \in S$, $(\alpha,\gamma,\delta) \in X$, $(\alpha,\delta)\in Z$ and $(\alpha, \beta, \delta) \in Y$ respectively. Thus the subgroups $P$, $Q$, $R$, $S$, $X$, $Y$ and $Z$ reduces to the following,
\begin{align*}
P = & \{\alpha\in Aut(H) \;|\; k^{\alpha(h)} = k^{h}\},\\
Q = & \{\beta\in Hom(K,H)\;|\; k = k^{\beta(k^{\prime})}\} = Hom(K,Stab_{H}(K)),\\
R = & \{\gamma \in CrossHom(H,Stab_{K}(H)) \;|\; \gamma(k\cdot h) = \gamma(h)\} \\
S = & \{\delta\in Aut(K)\;|\;\delta(k)^{h} = \delta(k^{h})\},\\
X =& \{(\alpha,\gamma,\delta)\in Aut(H)\times Map(H,K)\times Aut(K)\;|\; \gamma(hh^{\prime}) = \gamma(h)^{\alpha(h^{\prime})}\gamma(h^{\prime}),\\&  \delta(k)^{\alpha(h)}\gamma(h) = \gamma(k\cdot h)\delta(k^{h}) \},\\
Y =& \{(\alpha,\beta,\delta)\in Aut(H)\times Map(K,H)\times Map(K,K)\;|\; \beta(kk^{\prime}) = \beta(k)(\delta(k)\cdot \beta(k^{\prime})),\\& \delta(kk^{\prime}) = \delta(k)^{\beta(k^{\prime})}\delta(k^{\prime}), \delta(k)^{\alpha(h)} = \delta(k^{h}) \},\\
Z =& \{(\alpha,\delta)\in Aut(H)\times Aut(K)\;|\; \delta(k)^{\alpha(h)} = \delta(k^{h})\}.
\end{align*}
\begin{theorem}\label{s3t1}
	Let $A,B,C,$ and $D$ be defined as above. Then $Aut(G) = ABCD$. 
\end{theorem}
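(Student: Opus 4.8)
The plan is to obtain this as a direct application of Theorem \ref{s2t1}. Since $H=\langle b\rangle\cong\mathbb{Z}_4$ and $K=\langle a\rangle\cong\mathbb{Z}_m$ are both abelian, Theorem \ref{s2t1} already supplies, with no extra hypotheses, that $B$ and $C$ are subgroups of $\mathcal{A}$, that $A$ and $D$ normalize $B$ and $C$ (so $ABCD$ is a subgroup), and that $ABCD\subseteq\mathcal{A}=Aut(G)$. Hence the whole content to be proved is the reverse inclusion, and Theorem \ref{s2t1} already tells me when it holds: it suffices to verify the hypothesis $1-\beta\gamma\in P$. Once that is in place, Theorem \ref{s2t1} gives $ABCD=\mathcal{A}$, i.e. $Aut(G)=ABCD$.

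First I would read off from the factorization in the proof of Theorem \ref{s2t1} exactly which composite must land in $P$. There the relevant map is not $\beta\gamma$ itself but $1-\hat\beta\gamma$ with $\hat\beta=\alpha^{-1}\beta\delta^{-1}\in Q$. So the goal reduces to showing that $\hat\beta\gamma$ is the trivial map $0$: if so, then $1-\hat\beta\gamma=1$, the identity automorphism of $H$, which lies in $P$ trivially (it obviously satisfies $k^{\alpha(h)}=k^{h}$), and the factor $(1-\hat\beta\gamma)^{-1}$ occurring in that decomposition is again the identity.

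To prove $\hat\beta\gamma=0$ I would combine the structural information already extracted in Lemmas \ref{l2} and \ref{l3}. By Lemma \ref{l2}$(iii)$, $Im(\gamma)\subseteq\langle a^{2}\rangle$, so $\gamma$ outputs only even powers of $a$. By Lemma \ref{l3} every element of $Q$ lies in $Hom(K,H)$ with image in $\langle b^{2}\rangle$, and by Lemma \ref{l2}$(ii)$ such a map sends every even power of $a$ to $1$; since $\hat\beta\in Q$, it annihilates $\langle a^{2}\rangle$. Therefore $\hat\beta(\gamma(h))=1$ for all $h\in H$, i.e. $\hat\beta\gamma=0$, and the hypothesis of Theorem \ref{s2t1} is met. (If one prefers to work with $\beta$ directly, the same conclusion follows by parity bookkeeping: $\delta^{-1}\in Aut(\mathbb{Z}_m)$ is multiplication by a unit, which is odd because $m$ is even, so $\delta^{-1}$ preserves $\langle a^{2}\rangle$, after which $\beta$ kills it by Lemma \ref{l2}$(ii)$.) Invoking Theorem \ref{s2t1} then yields $\mathcal{A}\subseteq ABCD$, hence $Aut(G)=\mathcal{A}=ABCD$.

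The step I expect to demand the most care is precisely the bookkeeping in the last paragraph: making sure it is the conjugated map $\hat\beta\gamma$, rather than the bare $\beta\gamma$ of Lemma \ref{l2}$(v)$, that must vanish, and checking that the interposed automorphism $\delta^{-1}$ respects the even/odd splitting of exponents in $\mathbb{Z}_m$ so that $\hat\beta$ still annihilates $\langle a^{2}\rangle$. Everything else is a mechanical appeal to Theorem \ref{s2t1} together with the already-established Lemmas \ref{l2} and \ref{l3}.
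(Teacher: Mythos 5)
Your proof is correct and follows essentially the same route as the paper, whose entire argument is: Lemma \ref{l2}$(v)$ gives $\beta\gamma = 0$, hence $1-\beta\gamma = 1 \in P$, and Theorem \ref{s2t1} applies. The extra care you take in checking that the conjugated composite $\hat\beta\gamma$ with $\hat\beta = \alpha^{-1}\beta\delta^{-1}$ (the map actually appearing in the factorization inside the proof of Theorem \ref{s2t1}) also vanishes is a legitimate refinement that the paper silently skips, but it does not change the substance of the argument.
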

\begin{proof}
	Using the Lemma \ref{l2} $(v)$, we have that $\beta\gamma = 0$ and so, $1-\beta\gamma \in P$. Therefore, by the Theorem \ref{s2t1}, we have, $Aut(G) = ABCD$. 
\end{proof}
			\begin{theorem}\label{abcd}
			Let $\begin{pmatrix}
			\alpha & \beta\\
			\gamma & \delta
			\end{pmatrix} \in \mathcal{A}$. Then, if $\beta\in Q$ and $(\alpha, \gamma, \delta)\in X$, then $Aut(G) \simeq E \rtimes B  \simeq (C\rtimes M)\rtimes B$.
			\end{theorem}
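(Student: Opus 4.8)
The plan is to realize $Aut(G)=ABCD$ (Theorem \ref{s3t1}) as an iterated semidirect product: I identify $E$ with the set of all matrices in $\mathcal{A}$ whose top-right ($\beta$) entry is trivial, exhibit $B$ as a complement to the normal subgroup $E$, and then exhibit $M$ as a complement to the normal subgroup $C$ inside $E$. Since $H$ and $K$ are abelian, $B$ and $C$ are subgroups, and $E,M$ are subgroups by the general discussion.

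First I would record the elementary inclusions $A,C,D\subseteq E$: setting $\beta=0$ turns the defining relations of $P,R,S$ into the two relations defining $X$ (for instance, for $\alpha\in P$ the triple $(\alpha,0,1)$ lies in $X$ because $\delta(k)^{\alpha(h)}\gamma(h)=\gamma(k\cdot h)\delta(k^{h})$ collapses to $k^{\alpha(h)}=k^{h}$). The intersections $E\cap B=\{1\}$ and $C\cap M=\{1\}$ are immediate by comparing entries. Moreover, specializing $(A1)$--$(A7)$ to $\beta=0$ shows that $E$ is exactly the set of elements of $\mathcal{A}=Aut(G)$ with vanishing $\beta$-entry, a fact I will use repeatedly.

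The decomposition $E=C\rtimes M$ I would obtain from the multiplication rule restricted to $E$, which is lower-triangular:
\[
\begin{pmatrix}\alpha'&0\\\gamma'&\delta'\end{pmatrix}\begin{pmatrix}\alpha&0\\\gamma&\delta\end{pmatrix}=\begin{pmatrix}\alpha'\alpha&0\\\gamma'\alpha+\delta'\gamma&\delta'\delta\end{pmatrix}.
\]
Thus $(\alpha,\gamma,\delta)\mapsto(\alpha,\delta)$ is multiplicative on $E$, its kernel is $C$, and it splits through the inclusion $M\hookrightarrow E$; the only verification needed is that its image equals $Z$, i.e. that $(\alpha,\gamma,\delta)\in X$ forces $(\alpha,\delta)\in Z$, which follows once one checks $\gamma(k\cdot h)=\gamma(h)$ (using $Im(\gamma)\subseteq\langle a^{2}\rangle$ by Lemma \ref{l2}$(iii)$ and $a\cdot b=b^{-1}$) together with the abelianness of $K$. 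Hence $C\trianglelefteq E$ and $E=C\rtimes M$.

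The heart of the argument, and the step I expect to be the main obstacle, is that $E\trianglelefteq Aut(G)$. Because $A,C,D\subseteq E$ and $Aut(G)=ABCD$, it suffices to show that $B$ normalizes $E$. For $b=\left(\begin{smallmatrix}1&\beta\\0&1\end{smallmatrix}\right)\in B$ and $e=\left(\begin{smallmatrix}\alpha&0\\\gamma&\delta\end{smallmatrix}\right)\in E$ I would compute $beb^{-1}$ in two steps. First, $be=\left(\begin{smallmatrix}\alpha&\beta\\\gamma&\delta\end{smallmatrix}\right)$: the top-left entry is $\alpha$ because $\beta\gamma=0$ (by Lemma \ref{l2}$(ii)$,$(iii)$), and the top-right entry $\beta\delta$ equals $\beta$ because $\delta(a)=a^{r}$ with $r$ odd (Lemma \ref{l2}$(i)$) preserves parity while $\beta$ depends only on parity (Lemma \ref{l2}$(ii)$). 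Second, multiplying on the right by $b^{-1}=\left(\begin{smallmatrix}1&\bar\beta\\0&1\end{smallmatrix}\right)$ with $\bar\beta(k)=\beta(k)^{-1}$, the new top-right entry is $k\mapsto\alpha(\bar\beta(k))\,[\gamma(\bar\beta(k))\cdot\beta(k)]$; here $\alpha$ fixes $\langle b^{2}\rangle\supseteq Im(\beta)$, and $Im(\gamma)\subseteq Stab_{K}(H)=\langle a^{2}\rangle$ acts trivially on $\langle b^{2}\rangle$, so this collapses to $\beta(k)^{-1}\beta(k)=1$. Thus $beb^{-1}$ again has vanishing $\beta$-entry, hence lies in $E$. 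This gives $E\trianglelefteq Aut(G)$; then $EB$ is a subgroup containing $A,B,C,D$, so $EB=Aut(G)$, and with $E\cap B=\{1\}$ we obtain $Aut(G)=E\rtimes B\cong(C\rtimes M)\rtimes B$. The delicate point throughout is bookkeeping of which entries survive, and the whole normality computation hinges on the two cancellations $\beta\delta=\beta$ and the trivial action of $\langle a^{2}\rangle$ on $\langle b^{2}\rangle$.
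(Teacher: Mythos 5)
Your proof of $Aut(G)\simeq E\rtimes B$ is correct and is essentially the paper's own argument: the same three cancellations ($\beta\gamma=0$ from Lemma \ref{l2}, $\beta\delta=\beta$ because $\delta$ preserves the parity of exponents while $\beta$ only sees parity, and the trivial action of $Im(\gamma)\subseteq\langle a^{2}\rangle$ together with $\alpha$ fixing $\langle b^{2}\rangle\supseteq Im(\beta)$) are exactly what the paper uses to compute $\left(\begin{smallmatrix}1&\beta\\0&1\end{smallmatrix}\right)\left(\begin{smallmatrix}\alpha&0\\\gamma&\delta\end{smallmatrix}\right)\left(\begin{smallmatrix}1&\beta\\0&1\end{smallmatrix}\right)^{-1}$ in Equation (\ref{s3e2}). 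Your route to normality (characterize $E$ as the matrices of $\mathcal{A}$ with vanishing $\beta$-entry, then invoke $Aut(G)=ABCD$ from Theorem \ref{s3t1} to reduce everything to conjugation by $B$) is if anything tidier than the paper's, which deduces $E\triangleleft\mathcal{A}$ from conjugation by $B$ alone and only afterwards records $\mathcal{A}=EB$.

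The genuine gap is in the step $E=C\rtimes M$, precisely at the point you yourself flag as ``the only verification needed.'' You claim that $(\alpha,\gamma,\delta)\in X$ forces $(\alpha,\delta)\in Z$ because $\gamma(k\cdot h)=\gamma(h)$, and you justify the latter by $Im(\gamma)\subseteq\langle a^{2}\rangle$ and $a\cdot b=b^{-1}$. This justification fails: $\gamma(k\cdot h)=\gamma(h)$ amounts to $\gamma(b^{3})=\gamma(b)$, and by $(A2)$ one has $\gamma(b^{3})=a^{\lambda(s+2)}$ against $\gamma(b)=a^{\lambda}$, so what is really needed is the congruence $\lambda(s+1)\equiv 0 \Mod{m}$, which evenness of $\lambda$ does not give. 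It can genuinely fail: take $m=8$, $s=1$, $t=1$ (conditions $(G1)$--$(G4)$ hold and both actions are nontrivial); then $b\mapsto ba^{2}$, $a\mapsto a^{3}$ defines an automorphism lying in $E$ (here $\alpha=1$, $\gamma(b)=a^{2}$, $\delta(a)=a^{3}$, $\beta=0$), yet $\delta(a)^{\alpha(b)}=(a^{3})^{b}=a^{5}$ while $\delta(a^{b})=\delta(a^{3})=a$, so $(\alpha,\delta)\notin Z$; the image of your projection is not contained in $Z$, and indeed $E\neq CM$ for this group. The implication you want is not a formal consequence of membership in $X$; it is an arithmetic statement that holds exactly when the hypothesis of the theorem has content. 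The correct route is through the $X$-relation itself: evaluating $\delta(k)^{\alpha(h)}\gamma(h)=\gamma(k\cdot h)\delta(k^{h})$ at $h=b$, $k=a$ gives $\lambda(s+1)\equiv (r-1)(s-2t-1) \Mod{m}$ (the paper's Equation (\ref{e3})), and when $2t(s+1)\equiv 0 \Mod{m}$ --- the situation in which, by Lemma \ref{l3}, $Q$ is nontrivial --- condition $(G3)$ yields $2(s-2t-1)\equiv 0 \Mod{m}$, hence $\lambda(s+1)\equiv 0 \Mod{m}$, and only then do $\gamma\in R$ and $(\alpha,\delta)\in Z$ follow. In fairness, the paper's own proof is no better at this step (it writes $\left(\begin{smallmatrix}\alpha&0\\\gamma&\delta\end{smallmatrix}\right)=\left(\begin{smallmatrix}\alpha&0\\0&\delta\end{smallmatrix}\right)\left(\begin{smallmatrix}1&0\\\delta^{-1}\gamma&1\end{smallmatrix}\right)\in MC$ without checking $(\alpha,\delta)\in Z$ or $\delta^{-1}\gamma\in R$), but your proposal turns this unchecked membership into an explicit claim with an incorrect proof, and the example above shows the claim is false without the arithmetic hypothesis.
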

	\begin{proof}
		Let $\beta\in Q$. Then using the Lemma \ref{l3}, $Im(\beta)\le \langle b^{2}\rangle$. Let $k,k^{\prime}\in K$ such that $\beta(k) = b^{2i}$ and $\beta(k^{\prime}) = b^{2j}$, for all $i,j$. Then
		\begin{align*}
		\gamma\beta(kk^{\prime}) =& \gamma(\beta(k)(k\cdot \beta(k^{\prime})))\\
		=& \gamma(\beta(k))^{\alpha(k\cdot\beta(k^{\prime}))}\gamma(k\cdot\beta(k^{\prime}))\\
		 =& \gamma(b^{2i})^{\alpha(k\cdot b^{2j})}\gamma(\beta(k^{\prime}))\\
		  =&  (a^{i\lambda(s+1)})^{\alpha(b^{2j})}\gamma(\beta(k^{\prime}))\\
		   =& (a^{i\lambda(s+1)})^{b^{2j}}\gamma(\beta(k^{\prime}))\\
		   =& a^{i\lambda(s+1)s^{2j}}\gamma(\beta(k^{\prime}))\\ 
		   =& a^{i\lambda(s+1)}\gamma(\beta(k^{\prime}))\\
		    =& \gamma(b^{2i})\gamma(\beta(k^{\prime}))\\
		     =& \gamma\beta(k)\gamma\beta(k^{\prime}).
		\end{align*}
	 Thus $\gamma\beta\in Hom(K,K)$ and so, $\gamma\beta+ \delta\in Hom(K,K)$. Now, let $\beta(a) = b^{2j}$ and $\delta(a) = a^{r}$, where $j \in \{0,1\}$ and $r\in U(m)$. Then, using the Lemma \ref{l2}, we have 
		\begin{align*}
		(\gamma\beta+\delta)(a^{l}) = \left\{\begin{array}{ll}
		a^{lr}, & \text{if}\; l\; \text{is even}\\
		a^{\lambda j(s+1)+lr}, & \text{if}\; l\; \text{is odd}
		\end{array}
		\right..
		\end{align*}
	\n	One can easily observe that $\gamma\beta+ \delta$ defined as above is a bijection. Thus $\gamma\beta+\delta \in Aut(K)$.  
		
		\vspace{.2cm}
		
		\n Now, using $(C3)$ and $(C4)$ and the Lemma \ref{l2} $(iii)$, $(\gamma\beta+\delta)(a)\cdot \alpha(b) = \gamma\beta(a)\delta(a)\cdot \alpha(b) = \gamma\beta(a)\cdot (\delta(a)\cdot \alpha(b)) = \gamma\beta(a)\cdot\alpha(a\cdot b) = \alpha(a\cdot b)$ and $(\gamma\beta+\delta)(a)^{\alpha(b)}\gamma(b) = (\gamma\beta(a)\delta(a))^{\alpha(b)}\gamma(b) = (\delta(a)\gamma\beta(a))^{\alpha(b)}\gamma(b) = \delta(a)^{\gamma(\beta(a))\cdot \alpha(b)}$  $\gamma(\beta(a))^{\alpha(b)}\gamma(b) = \delta(a)^{\alpha(b)}\gamma(b^{2i})^{\alpha(b)}\gamma(b) = \delta(a)^{\alpha(b)}\gamma(b)(a^{i\lambda(s+1)})^{\alpha(b)} = \gamma(a\cdot b)\delta(a^{b})$ $a^{i\lambda(s+1)} = \gamma(a\cdot b)\delta(a^{b})\gamma(b^{2i}) = \gamma(a\cdot b)\gamma(\beta(a))\delta(a^{b}) = \gamma(a\cdot b)\gamma(\beta(a^{2t+1}))$ $\delta(a^{b}) = \gamma(a\cdot b)\gamma(\beta(a^{b}))\delta(a^{b}) = \gamma(a\cdot b)(\gamma\beta+\delta)(a^{b})$. Thus, $(\alpha, \gamma,\gamma\beta+\delta)\in X$.
		
		\vspace{.2cm}
		
		\n Using the Lemma \ref{l2} $(v)$, we have
		\begin{equation}\label{s3e2}
		\begin{pmatrix}
		1 & \beta\\
		0 & 1
		\end{pmatrix}\begin{pmatrix}
		\alpha & 0\\
		\gamma & \delta
		\end{pmatrix}\begin{pmatrix}
		1 & \beta\\
		0 & 1
		\end{pmatrix}^{-1} = \begin{pmatrix}
		\alpha & (\alpha+\beta\gamma)(-\beta)+ \beta\delta\\
		\gamma & \gamma\beta+\delta
		\end{pmatrix}.
		\end{equation}
		\n Now, using the Lemma \ref{l2} $(ii)$, we have, $((\alpha+\beta\gamma)(-\beta)+ \beta\delta)(a) = (\alpha+\beta\gamma)(-\beta(a)) \beta(\delta(a)) = (\alpha + \beta\gamma)(b^{-2j})\beta(a^{r}) = \alpha(b^{2j})\beta(\gamma(b^{2j}))b^{2j} = b^{2ij}b^{2j} = b^{2j(i+1)} = 1$. Thus, $(\alpha+\beta\gamma)(-\beta)+ \beta\delta = 0$. Therefore, by the Equation (\ref{s3e2}),
		\[\begin{pmatrix}
		1 & \beta\\
		0 & 1
		\end{pmatrix}\begin{pmatrix}
		\alpha & 0\\
		\gamma & \delta
		\end{pmatrix}\begin{pmatrix}
		1 & \beta\\
		0 & 1
		\end{pmatrix}^{-1} = \begin{pmatrix}
		\alpha & 0\\
		\gamma & \gamma\beta+\delta
		\end{pmatrix}\in E.\] 
		So, $E \triangleleft \mathcal{A}$. Now, if $\begin{pmatrix}
		\alpha & \beta\\
		\gamma & \delta
		\end{pmatrix} \in \mathcal{A}$, then 
		\[\begin{pmatrix}
		\alpha & \beta\\
		\gamma & \delta
		\end{pmatrix}=\begin{pmatrix}
		\alpha & 0\\
		\gamma & -\gamma\alpha^{-1}\beta + \delta
		\end{pmatrix}\begin{pmatrix}
		1 & \alpha^{-1}\beta\\
		0 & 1
		\end{pmatrix}\in EB.\] 
		\n Clearly, $E\cap B = \{1\}$. Thus, $\mathcal{A}= E\rtimes B$. Hence, $Aut(G) \simeq E\rtimes B$.
		
		\vspace{.2cm}
		
		\n Let $\begin{pmatrix}
		\alpha & 0\\
		\gamma & \delta
		\end{pmatrix} \in E$. Then 
		\[\begin{pmatrix}
		\alpha & 0\\
		\gamma & \delta
		\end{pmatrix}=\begin{pmatrix}
		\alpha & 0\\
		0 &  \delta
		\end{pmatrix}\begin{pmatrix}
		1 & 0\\
		\delta^{-1}\gamma & 1
		\end{pmatrix}\in MC.\] 
		\n Clearly, $M\cap C = \{1\}$. Since $A\times D$ normalizes $C$, $C\triangleleft E$. Thus, $E = C\rtimes M$. Hence, $X \simeq C\rtimes M$ and so, $Aut(H)\simeq (C\rtimes M)\rtimes B$.
		
	\end{proof}
\n Now, we will find the structure and the order of the automorphism group $Aut(G)$. For this, we will proceed by first taking $t$ to be such that $\gcd(t,m) =1$ and then by taking $t$ such that $\gcd(t,m) = d$, where $d>1$.
	\begin{theorem}\label{t2}
		Let $4$ divides $m$ and $t$ be odd such that $\gcd(t,m) = 1$. Then 
		\begin{equation*}
		Aut(G) \simeq \left\{\begin{array}{ll}
		(\mathbb{Z}_{\frac{m}{2}}\rtimes(\mathbb{Z}_{2}\times U(m)))\rtimes \mathbb{Z}_{2}, & \text{if}\; s\in \{\frac{m}{2}-1, m-1\}\\
		\mathbb{Z}_{\frac{m}{2}}\rtimes(\mathbb{Z}_{2}\times U(m)), & \text{if}\; s\in \{\frac{m}{4}-1, \frac{3m}{4}-1\}
		\end{array} \right..
		\end{equation*}
	\end{theorem}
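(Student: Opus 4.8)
The plan is to run the group $G$ through the general decomposition already in hand and then identify each factor by an explicit computation with the matched pair. By Theorem \ref{s3t1}, $Aut(G)=ABCD$, and by Theorem \ref{abcd} this refines to $Aut(G)\cong (C\rtimes M)\rtimes B$; thus it is enough to determine the three groups $B\cong Q$, $C\cong R$, $M\cong Z$ together with the two actions that glue them. Every computation is powered by Lemma \ref{l1}, Lemma \ref{l2} and the congruences $(G1)$--$(G4)$, under the standing hypotheses $4\mid m$, $t$ odd and $\gcd(t,m)=1$.

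First I would pin down the outer factor $B\cong Q$, since this is exactly what separates the two cases. By Lemma \ref{l3}, $Q=Hom(K,Stab_H(K))$ is nontrivial precisely when $b^{2}$ stabilises $K$, i.e. $a^{b^{2}}=a$, which by Lemma \ref{l1} means $2t(1+s)\equiv 0\Mod{m}$. Since $\gcd(t,m)=1$ this is equivalent to $s\equiv -1\Mod{m/2}$, that is $s\in\{m/2-1,\,m-1\}$; then $Stab_H(K)=\langle b^{2}\rangle$ and $Q\cong Hom(\mathbb{Z}_m,\mathbb{Z}_2)\cong\mathbb{Z}_2$. For $s\in\{m/4-1,\,3m/4-1\}$, using that $t$ is odd one checks $2t(1+s)\not\equiv 0\Mod m$, so $Stab_H(K)=\{1\}$ and $Q=1$. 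This produces the extra $\rtimes\mathbb{Z}_2$ in the first case and its absence in the second.

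Next I would compute $C\cong R$ and $M\cong Z$. For $C$ the reduced description realises $\gamma$ as a crossed homomorphism $H=\mathbb{Z}_4\to Stab_K(H)=\langle a^{2}\rangle\cong\mathbb{Z}_{m/2}$ for the action $a^{2}\mapsto a^{2s}$, subject to $\gamma(a\cdot b)=\gamma(b)$; writing $\gamma(b)=a^{2\mu}$ and simplifying the cocycle by $(G1)$ and $(G4)$, the defining relation collapses to a single congruence in $\mu$, and one reads off that $C$ is cyclic, the aim being $C\cong\mathbb{Z}_{m/2}$. For $M\cong Z$ one runs over the pairs $(\alpha,\delta)\in Aut(\mathbb{Z}_4)\times Aut(\mathbb{Z}_m)=\mathbb{Z}_2\times U(m)$ and tests the intertwining relation $\delta(k)^{\alpha(h)}=\delta(k^{h})$ via Lemma \ref{l1}; the constraints $(G1)$--$(G3)$ should force compatibility, giving $M\cong\mathbb{Z}_2\times U(m)$.

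Finally I would assemble the semidirect product. Conjugating $\begin{pmatrix}1&0\\\gamma&1\end{pmatrix}$ by $\begin{pmatrix}\alpha&0\\0&\delta\end{pmatrix}$ sends $\gamma\mapsto\delta\gamma\alpha^{-1}$, which on $\gamma(b)=a^{2\mu}$ reads $\mu\mapsto r\mu$ for $\delta:a\mapsto a^{r}$ and is controlled by $\gamma(b^{3})=\gamma(b)$ for the $Aut(H)$ part; this is the action of $\mathbb{Z}_2\times U(m)$ on $\mathbb{Z}_{m/2}$. Conjugating $E$ by $B$ supplies the remaining outer action, and one obtains $(\mathbb{Z}_{m/2}\rtimes(\mathbb{Z}_2\times U(m)))\rtimes\mathbb{Z}_2$ when $s\in\{m/2-1,m-1\}$ and $\mathbb{Z}_{m/2}\rtimes(\mathbb{Z}_2\times U(m))$ when $s\in\{m/4-1,3m/4-1\}$. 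The main obstacle will be the arithmetic in the middle step: one must use $(G1)$--$(G4)$ with real care to show that the crossed-homomorphism relation cuts $R$ down to a cyclic group of order exactly $m/2$ and that every pair $(\alpha,\delta)$ survives the intertwining condition, since it is precisely here that the order of $Aut(G)$, and the cyclicity of the normal factor, is decided.
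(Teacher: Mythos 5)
Your identification of the factor $B$ (via Lemma \ref{l3} and $\gcd(t,m)=1$) and your treatment of the first case $s\in\{\frac{m}{2}-1,\,m-1\}$ agree with the paper and are sound. The genuine gap is in your middle step for the second case $s\in\{\frac{m}{4}-1,\frac{3m}{4}-1\}$ (which, by $(G4)$, occurs only when $8\mid m$): computing $C\simeq R$ and $M\simeq Z$ \emph{separately} and then gluing cannot produce the stated answer, because each of the two defining conditions is strictly stronger than the coupled condition $(A6)$ that an element of $X$ actually satisfies. Writing $\gamma(b)=a^{\lambda}$ ($\lambda$ even) and $\delta(a)=a^{r}$, the $R$-condition $\gamma(a\cdot b)=\gamma(b)$ says $\lambda(s+1)\equiv 0\Mod{m}$; since $s+1\in\{\frac{m}{4},\frac{3m}{4}\}$, this forces $\lambda\equiv 0\Mod{4}$, so $R\simeq\mathbb{Z}_{\frac{m}{4}}$, not $\mathbb{Z}_{\frac{m}{2}}$. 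Likewise the $Z$-condition for $(1,\delta)$ reads $(r-1)(s-2t-1)\equiv 0\Mod{m}$, and since $t\equiv -1\Mod{\frac{m}{4}}$ (derived from $(G3)$ at the start of the paper's proof) gives that $s-2t-1$ is an odd multiple of $\frac{m}{4}$, it forces $r\equiv 1\Mod{4}$: only half of $U(m)$ survives, and similarly for $\alpha(b)=b^{3}$. So your $C\rtimes M$ has order $\frac{m}{4}\phi(m)$, too small by a factor of four, and no choice of gluing actions can make it $\mathbb{Z}_{\frac{m}{2}}\rtimes(\mathbb{Z}_{2}\times U(m))$.

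What fails conceptually is that $(A6)$ couples $\gamma$ and $\delta$: it is the single congruence $\lambda(s+1)\equiv(r-1)(s-2t-1)\Mod{m}$ when $\alpha(b)=b$ (Equation (\ref{e3})), with the companion congruence (\ref{e4}) when $\alpha(b)=b^{3}$, and in this case it ties $\lambda$ and $r$ together modulo $4$ without forcing either congruence separately. A concrete witness: for $m=8$, $s=t=1$, the map $b\mapsto ba^{2}$, $a\mapsto a^{3}$ is an automorphism of $G$ lying in $E$, yet its $\gamma$ fails the $R$-condition ($\gamma(a\cdot b)=\gamma(b^{3})=a^{6}\ne a^{2}$) and its $(\alpha,\delta)$ fails the $Z$-condition ($\delta(a)^{b}=a^{5}\ne a=\delta(a^{b})$), although $(A6)$ holds, since $a^{5}a^{2}=a^{6}a$. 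Hence $E\ne CM$ here, and the factorization $Aut(G)\simeq(C\rtimes M)\rtimes B$ cannot be reached by your route. This is exactly where the paper proceeds differently: it derives the coupled congruences (\ref{e3}) and (\ref{e4}) from $(A6)$ and then enumerates the admissible pairs $(\lambda,r)$ for each of the finitely many values of $(s,t)$ allowed by $(G1)$--$(G4)$, rather than intersecting the separate conditions defining $R$ and $Z$; any repair of your argument in the second case must work with that coupled condition.
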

	\begin{proof}
		Let $\gcd(t,m) = 1$. Then, using $(G2)$, we get, $s\equiv -1 \Mod {\frac{m}{4}}$ which implies that $s\in \{\frac{m}{4} -1, \frac{m}{2} -1, \frac{3m}{4} -1, m-1\}$. Now, using $(G3)$, we get $t\equiv -1 \Mod {\frac{m}{4}}$. Then $t\in \{\frac{m}{4} -1, \frac{m}{2} -1, \frac{3m}{4} -1, m-1\}$.
		
		\vspace{.2cm}
		
		\n	Let $(\alpha, \gamma, \delta)\in X$ be such that $\alpha(b) = b^{i}$, $\gamma(b) = a^{\lambda}$ and $\delta(a) = a^{r}$, where $i \in \{1,3\}$, $\lambda$ is even, $0 \le \lambda\le m-1$, and $r\in U(m)$. Using $\gamma(hh^{\prime}) = \gamma(h)^{\alpha(h^{\prime})}\gamma(h^{\prime})$, we get $\gamma(b^{2}) = a^{\lambda(s+1)}, \gamma(b^{3}) = a^{\lambda(s+2)}$ and $\gamma(b^{4}) = 1$. We consider two cases based on the image of the map $\alpha$.
		
		\vspace{.2cm}
		
		\n \textit{Case(i)}: Let $\alpha(b) = b$. Then, using $\gamma(a\cdot b)\delta(a^{b}) = \delta(a)^{b}\gamma(b)$, $a^{\lambda(s+2)+ (2t+1)r} = \gamma(a\cdot b)\delta(a^{b}) = \delta(a)^{b}\gamma(b) = (a^{r})^{b}a^{\lambda} = a^{2t+1+(r-1)s+\lambda}$ which implies that 
		\begin{equation}\label{e3}
		\lambda(s+1)\equiv (r-1)(s-2t-1) \Mod{m}.
		\end{equation}
		\n If $s\in \{\frac{m}{2}-1, m-1\}$, then the Equation (\ref{e3}) holds for all values of $t$, $\lambda$ and $r$. Now, if $(s,t) \in \{(\frac{m}{4}-1, \frac{m}{2}-1), (\frac{m}{4}-1, m-1)\}$, then by the Equation (\ref{e3}), $r \equiv 1+\lambda \Mod{4}$. Since $\lambda$ is even, $r\equiv 1 \; \text{or}\; 3 \Mod{4}$. Again, if $(s,t) \in \{(\frac{m}{4}-1, \frac{m}{4}-1), (\frac{m}{4}-1, \frac{3m}{4}-1)\}$, then by the Equation (\ref{e3}), $r \equiv 3-\lambda \Mod{4}$. Since $\lambda$ is even, $r\equiv 1 \; \text{or}\; 3 \Mod{4}$. By the similar argument, we get the same results for $s= \frac{3m}{4}-1$. Thus, in this case, the choices for the maps $\gamma$ and $\delta$ are, $\gamma_{\lambda}(b) = a^{\lambda}$ and $\delta_{r}(a) = a^{r}$, for all $0 \le \lambda \le m-1$, $\lambda$ is even, and $r \in U(m)$.  
		
		\vspace{.2cm}
		
		\n \textit{Case(ii):} Let $\alpha(b) = b^{3}$. Then, $a^{\lambda(s+2)+ (2t+1)r} = \gamma(a\cdot b)\delta(a^{b}) = \delta(a)^{b^{3}}\gamma(b) = (a^{r})^{b^{3}}a^{\lambda} = a^{4t+2ts+1+(r-1)s+\lambda}$ which implies that 
		\begin{equation}\label{e4}
		\lambda(s+1)\equiv 2t(s+1)+(r-1)(s-2t-1) \Mod{m}.
		\end{equation}
		\n If $s\in \{\frac{m}{2}-1, m-1\}$, then the Equation (\ref{e4}) holds for all values of $t$, $\lambda$ and $r$. Now, if $(s,t) \in \{(\frac{m}{4}-1, \frac{m}{2}-1), (\frac{m}{4}-1, m-1)\}$, then by the Equation (\ref{e4}), $r \equiv 3+\lambda \Mod{4}$. Since $\lambda$ is even, $r\equiv 1 \; \text{or}\; 3 \Mod{4}$. Again, if $(s,t) \in \{(\frac{m}{4}-1, \frac{m}{4}-1), (\frac{m}{4}-1, \frac{3m}{4}-1)\}$, then by the Equation (\ref{e4}), $r \equiv 1+\lambda \Mod{4}$. Since $\lambda$ is even, $r\equiv 1 \; \text{or}\; 3 \Mod{4}$. By the similar argument, we get the same results for $s= \frac{3m}{4}-1$. Thus, in this case, the choices for the maps $\gamma$ and $\delta$ are, $\gamma_{\lambda}(b) = a^{\lambda}$ and $\delta_{r}(a) = a^{r}$, for all $0 \le \lambda \le m-1$, $\lambda$ is even, and $r \in U(m)$.  
		
		\vspace{.2cm}
		
		\n Thus combining both the \textit{cases} $(i)$ and $(ii)$, we get, for all $\alpha\in Aut(H)$, the choices for the maps $\gamma$ and $\delta$ are, $\gamma_{\lambda}(b) = a^{\lambda}$ and $\delta_{r}(a) = a^{r}$, where $0 \le \lambda \le m-1$, $\lambda$ is even, and $r \in U(m)$. So,  using the Theorem \ref{abcd}, $X \simeq \mathbb{Z}_{\frac{m}{2}}\rtimes(\mathbb{Z}_{2}\times U(m))$. Now, if $s\in \{\frac{m}{2}-1, m-1\}$, then $2t(s+1)\equiv 0 \Mod{m}$. Therefore, using the Lemma \ref{l3}, $Im(\beta) = \{b^{2}\}$ and so, $B\simeq \mathbb{Z}_{2}$. If $s\in \{\frac{m}{4}-1, \frac{3m}{4}-1\}$, then $2t(s+1)\not\equiv 0 \Mod{m}$. Therefore, using the Lemma \ref{l3}, $Im(\beta) = \{1\}$ and so, $B$ is a trivial group.  Hence, by the Theorem \ref{abcd},
		\begin{align*}
		Aut(G) \simeq E \rtimes B \simeq	\left\{\begin{array}{ll}
		(\mathbb{Z}_{\frac{m}{2}}\rtimes(\mathbb{Z}_{2}\times U(m)))\rtimes \mathbb{Z}_{2}, & \text{if}\; s\in \{\frac{m}{2}-1, m-1\}\\
		\mathbb{Z}_{\frac{m}{2}}\rtimes(\mathbb{Z}_{2}\times U(m)), & \text{if}\; s\in \{\frac{m}{4}-1, \frac{3m}{4}-1\}
		\end{array} \right..
		\end{align*}
		\end{proof}
	\begin{theorem}
		Let $m=2q$, where $q>1$ is odd and $\gcd(t,m) = 1$. Then, $Aut(G)\simeq (\mathbb{Z}_{\frac{m}{2}}\rtimes(\mathbb{Z}_{2}\times U(m)))\rtimes \mathbb{Z}_{2}$.
	\end{theorem}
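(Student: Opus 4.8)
The plan is to follow the same route as the proof of Theorem \ref{t2}, exploiting Theorem \ref{abcd} which already gives $Aut(G)\simeq E\rtimes B\simeq (C\rtimes M)\rtimes B$. So everything reduces to three tasks: identifying the admissible pairs $(s,t)$, computing $X$ (equivalently $E\simeq C\rtimes M$), and computing $B$.

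First I would pin down $s$ and $t$. Since $m=2q$ with $q$ odd, $\gcd(4,m)=2$, so $(G2)$ together with $\gcd(t,m)=1$ forces $4(s+1)\equiv 0\Mod{m}$, i.e. $s+1\equiv 0\Mod{q}$; hence $s\in\{\frac m2-1,\,m-1\}$. Feeding this into $(G3)$ and using that $q$ is odd gives $t\equiv -1\Mod{q}$, so that $2(t+1)\equiv 0\Mod{m}$. I would also record the two facts to be used repeatedly: $s+1\equiv 0\Mod{\frac m2}$, and that every $r\in U(m)$ is odd (as $m$ is even), so $r-1$ is even.

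Next I would compute $X$. Parametrize $(\alpha,\gamma,\delta)\in X$ exactly as in Theorem \ref{t2}, by $\alpha(b)=b^{i}$ ($i\in\{1,3\}$), $\gamma(b)=a^{\lambda}$ ($\lambda$ even), $\delta(a)=a^{r}$ ($r\in U(m)$), and write out the compatibility condition $\gamma(a\cdot b)\delta(a^{b})=\delta(a)^{b}\gamma(b)$. This yields precisely Equations (\ref{e3}) and (\ref{e4}) for $\alpha(b)=b$ and $\alpha(b)=b^{3}$. The key point, and this is where $q$ odd does the work (in contrast to the $4\mid m$ case), is that with $s\in\{\frac m2-1,m-1\}$ each summand vanishes mod $m$: $\lambda(s+1)\equiv 0$ and $2t(s+1)\equiv 0$ since $s+1\equiv 0\Mod{\frac m2}$ and $\lambda$ is even, while $(r-1)(s-2t-1)\equiv 0$ because $s-2t-1\equiv -2(t+1)\equiv 0\Mod{m}$ in the case $s=m-1$, and $s-2t-1\equiv q\Mod{m}$ against the even factor $r-1$ in the case $s=\frac m2-1$. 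Hence both congruences hold for every even $\lambda$ and every $r\in U(m)$, for both choices of $\alpha$. Therefore $\gamma$ ranges over the $\frac m2$ maps $\gamma_{\lambda}(b)=a^{\lambda}$, $\delta$ over $U(m)$, and $\alpha$ over $Aut(H)\simeq\mathbb{Z}_{2}$, giving $X\simeq\mathbb{Z}_{\frac m2}\rtimes(\mathbb{Z}_{2}\times U(m))$ exactly as before.

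Finally I would compute $B$ and assemble. For both admissible values of $s$ one has $2t(1+s)\equiv 0\Mod{m}$ and $\gcd(s+1,\frac m2)=q\neq 1$; by Lemma \ref{l3} this forces $Im(\beta)=\langle b^{2}\rangle$, so $B\simeq\mathbb{Z}_{2}$. Combining with Theorem \ref{abcd} gives $Aut(G)\simeq E\rtimes B\simeq(\mathbb{Z}_{\frac m2}\rtimes(\mathbb{Z}_{2}\times U(m)))\rtimes\mathbb{Z}_{2}$. The only genuine obstacle is the congruence bookkeeping in the middle step; the essential difference from Theorem \ref{t2} is that $q$ odd makes $\gcd(4,m)=2$, which eliminates the second family $s\in\{\frac m4-1,\frac{3m}4-1\}$ appearing there and leaves only the case whose automorphism group carries the extra $\rtimes\mathbb{Z}_{2}$ factor.
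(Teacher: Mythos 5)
Your proposal is correct and takes essentially the same route as the paper: the paper's own proof just derives $s,t\in\{\frac{m}{2}-1,\,m-1\}$ from $(G1)$--$(G3)$ and states that the result "follows on the lines of the proof of Theorem \ref{t2}", which is precisely the argument you execute (same parametrization of $X$, same use of Lemma \ref{l3} to get $B\simeq\mathbb{Z}_{2}$, same assembly via Theorem \ref{abcd}). Your write-up simply makes explicit the congruence bookkeeping --- that $\lambda(s+1)$, $2t(s+1)$ and $(r-1)(s-2t-1)$ all vanish modulo $m$ --- which the paper leaves implicit in that citation.
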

			\begin{proof}
				Using $(G1), (G2),$ and $(G3)$, we get $s,t\in \{\frac{m}{2}-1, m-1\}$. Then, the result follows on the lines of the proof of the Theorem \ref{t2}.
			\end{proof}
		\begin{theorem}\label{t3}
		Let $m = 2^{n}$, $n\ge 3$. Then 
		\begin{itemize}
			\item[$(i)$] if $t$ is even, then $Aut(G) \simeq (\mathbb{Z}_{4}\rtimes(\mathbb{Z}_{2}\times (\mathbb{Z}_{2}\times\mathbb{Z}_{2^{n-2}})))\rtimes \mathbb{Z}_{2}$,
			\item[$(ii)$] if $t$ is odd, then
			\begin{align*}
			Aut(G) \simeq \left\{ \begin{array}{ll}
			(\mathbb{Z}_{2^{n-1}}\rtimes(\mathbb{Z}_{2}\times (\mathbb{Z}_{2}\times\mathbb{Z}_{2^{n-2}})))\rtimes \mathbb{Z}_{2}, & \text{if}\; s\in \{\frac{m}{2}-1, m-1\}\\
			\mathbb{Z}_{2^{n-1}}\rtimes(\mathbb{Z}_{2}\times (\mathbb{Z}_{2}\times\mathbb{Z}_{2^{n-2}})), & \text{if}\; s\in \{\frac{m}{4}-1, \frac{3m}{4}-1\}
			\end{array}\right..
			\end{align*} 
		\end{itemize}
	\end{theorem}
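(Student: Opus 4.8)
The plan is to run the machinery of Theorem \ref{abcd}, which already gives $Aut(G) \simeq E \rtimes B \simeq (C \rtimes M) \rtimes B$, and to pin down the three factors $C$, $M$ and $B$ explicitly when $m = 2^{n}$ with $n \geq 3$. The starting point is to record $Aut(K) = Aut(\mathbb{Z}_{2^{n}}) = U(2^{n}) \cong \mathbb{Z}_{2} \times \mathbb{Z}_{2^{n-2}}$ and $Aut(H) = Aut(\mathbb{Z}_{4}) \cong \mathbb{Z}_{2}$; together with the compatibility condition $\delta(k)^{\alpha(h)} = \delta(k^{h})$ these assemble the factor $M \cong \mathbb{Z}_{2} \times (\mathbb{Z}_{2} \times \mathbb{Z}_{2^{n-2}})$ that appears in both parts of the statement. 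What remains is to determine $C$ (the first displayed factor) and $B$ (the outer $\mathbb{Z}_{2}$), and this is exactly where the parity of $t$ enters.

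First I would extract the arithmetic constraints on $(s,t)$ from $(G1)$--$(G4)$ with $m = 2^{n}$. Since $\gcd(s, \frac{m}{2}) = 1$ forces $s$ to be odd, $(G1)$ reduces to $s^{2} \equiv 1 \Mod{2^{n-1}}$. When $t$ is odd, $t+1$ is coprime to $m$, so $(G3)$ is automatic, while $t$ odd is equivalent to $\gcd(t, 2^{n}) = 1$; when $t$ is even, $t+1$ is a unit modulo $2^{n}$ and $(G3)$ collapses to $s \equiv 1 \Mod{2^{n-1}}$. This dichotomy is the engine of the two cases.

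Case $(ii)$ ($t$ odd) then follows almost verbatim from Theorem \ref{t2}: since $\gcd(t,m) = 1$, the counting of admissible $\gamma_{\lambda}$ and $\delta_{r}$ carried out there applies, producing the first factor $\mathbb{Z}_{m/2} = \mathbb{Z}_{2^{n-1}}$ and the $U(m)$-part, and one only substitutes $U(2^{n}) \cong \mathbb{Z}_{2} \times \mathbb{Z}_{2^{n-2}}$; the split on $s \in \{\frac{m}{2}-1, m-1\}$ versus $s \in \{\frac{m}{4}-1, \frac{3m}{4}-1\}$ controls $Im(\beta)$, hence whether $B \cong \mathbb{Z}_{2}$ or $B$ is trivial, exactly as in Theorem \ref{t2}. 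For Case $(i)$ ($t$ even) I would recompute $X$ from the reduced description $X = \{(\alpha,\gamma,\delta) \mid \gamma(hh') = \gamma(h)^{\alpha(h')}\gamma(h'),\ \delta(k)^{\alpha(h)}\gamma(h) = \gamma(k\cdot h)\delta(k^{h})\}$: writing $\gamma(b) = a^{\lambda}$ with $\lambda$ even (Lemma \ref{l2}$(iii)$), $\alpha(b) = b^{i}$ and $\delta(a) = a^{r}$, the generating instance of the second relation at $(k,h) = (a,b)$ yields a congruence of the shape $\lambda(s+1) \equiv (r-1)(s-2t-1) \Mod{2^{n}}$ (respectively its $\alpha(b)=b^{3}$ analogue), just as in Equations (\ref{e3}) and (\ref{e4}). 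Solving this congruence under $s \equiv 1 \Mod{2^{n-1}}$ determines the admissible $\lambda$, hence $C$ and the first factor, while Lemma \ref{l3} fixes $B$ through the criterion $2t(1+s) \equiv 0 \Mod{m}$ together with $\gcd(s+1, \frac{m}{2}) \neq 1$.

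The main obstacle is the congruence analysis in Case $(i)$: one must count the $\lambda$ satisfying the compatibility relation modulo $2^{n}$ uniformly over all admissible $\delta_{r}$ and both values of $\alpha$, and then check that the admissible $\gamma$ form a single $C$-coset over each $(\alpha,\delta) \in M$, so that the splitting $E = C \rtimes M$ of Theorem \ref{abcd} really applies and the orders multiply. A secondary technical point is the identification $U(2^{n}) \cong \mathbb{Z}_{2} \times \mathbb{Z}_{2^{n-2}}$ and the verification that $\delta(k)^{\alpha(h)} = \delta(k^{h})$ imposes no further restriction on $M$, so that $M$ is the full $\mathbb{Z}_{2} \times U(2^{n})$.
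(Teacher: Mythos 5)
Your Case (ii) is sound and is exactly the paper's own argument: for $t$ odd one has $\gcd(t,m)=1$ and the whole computation reduces to Theorem \ref{t2}. The problem is Case (i).

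For $t$ even, Theorem \ref{abcd} cannot be invoked at all, because its hypothesis --- that the $\beta$-component of every matrix in $\mathcal{A}$ lies in $Q$ (equivalently, by Lemma \ref{l3}, has image in $\langle b^{2}\rangle$) and that $(\alpha,\gamma,\delta)\in X$ --- fails in this case. Take $n=3$, $t=2$, $s=1$, so $G=\langle a,b \mid a^{8}=b^{4}=1,\ ab=b^{3}a^{5},\ a^{2}b=ba^{2}\rangle$. The assignment $b\mapsto b$, $a\mapsto ba$ extends to an automorphism of $G$ (this is a class-$2$ group with $[a,b]=b^{2}a^{4}$ central; one checks $[ba,b]=[a,b]=b^{2}a^{4}=b^{2}(ba)^{4}$ and that $ba,b$ generate $G$), and its matrix has $\beta(a)=b\notin\langle b^{2}\rangle$, while $\delta(a^{2})=a^{b}a=a^{6}$ shows $\delta\notin Aut(K)$; so $\beta\notin Q$ and $(\alpha,\gamma,\delta)\notin X$. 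Such automorphisms are invisible to the decomposition $(C\rtimes M)\rtimes B$ you propose.

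Quantitatively this changes the answer, not just the proof. With $s\equiv 1 \Mod{2^{n-1}}$ your congruence forces $\lambda\equiv 0 \Mod{2^{n-1}}$, so $C\simeq\mathbb{Z}_{2}$; Lemma \ref{l3} gives $|B|\le 2$; and $|M|=2^{n}$. Hence your construction accounts for at most $2^{n+2}$ automorphisms and would yield the shape $(\mathbb{Z}_{2}\rtimes(\mathbb{Z}_{2}\times U(m)))\rtimes\mathbb{Z}_{2}$, which is not the stated group; the correct order is $2^{n+3}$ (for the example above a direct count of admissible pairs $\theta(a)=b^{i}a^{j}$, $\theta(b)=b^{k}a^{l}$ gives $64$, not $32$). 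The paper's proof of part (i) therefore uses a transposed decomposition: it analyses the subgroup $F$ of matrices with $\gamma=0$, i.e.\ triples in $Y$, in which $\beta$ ranges over a full $\mathbb{Z}_{4}$ of crossed homomorphisms (including $\beta(a)=b$), obtains $F\simeq\mathbb{Z}_{4}\rtimes(\mathbb{Z}_{2}\times U(m))$, and then proves $F\triangleleft\mathcal{A}$, $F\cap C=\{1\}$, $\mathcal{A}=FC$, so that $Aut(G)\simeq F\rtimes C$. In other words, in part (i) the $\mathbb{Z}_{4}$ comes from the $\beta$'s and the outer $\mathbb{Z}_{2}$ is $C$ --- precisely the opposite of your identification, and a decomposition your plan never produces. (The "main obstacle" you flag, counting $\lambda$ uniformly over $M$, is not where the difficulty lies.)
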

	\begin{proof}
		We will find the automorphism group $Aut(G)$ in two cases namely, when $t$ is even and when $t$ is odd.
		
		\vspace{.2cm}
		
		\n \textit{$Case(i)$}. Let $t$ be even. Then $2(t+1)(s-1)\equiv 0 \Mod{2^{n}}$ implies that $s\equiv 1 \Mod{2^{n-1}}$. Therefore, $s = 1, 2^{n-1} + 1$. Now, $4t(s+1)\equiv 0 \Mod{2^{n}}$ implies that $t \equiv 0 \Mod{2^{n-3}}$. Therefore, $t\in \{2^{n-3}, 2^{n-2}, 3\cdot 2^{n-3}, 2^{n-1}, 5\cdot 2^{n-3}, 3\cdot 2^{n-2}, 7\cdot 2^{n-3}, 2^{n}\}$. Note that, for $t = 2^{n-1}$ and $t=2^{n}$, $G$ is the semidirect product of $H$ and $K$. So, we consider the other values of $t$.  
		
		\vspace{.2cm}
		
		\n Let $\gamma \in R$ be such that $\gamma(b) = a^{\lambda}$, where $0\le \lambda \le m-1$ and $\lambda$ is even. Then, since, $s=1$ and $\lambda$ is even, by $(A2)$, $\gamma\in Hom(H,K)$. Now, $1 = \gamma(b^{4}) = a^{4\lambda}$ which implies that $\lambda \equiv 0 \Mod{2^{n-2}}$. Therefore, $\lambda \in \{2^{n-2}, 2^{n-1}, 3\cdot{2^{n-2}}, 2^{n}\}$. Using $\gamma(a\cdot b) = \gamma(b)$, $a^{3\lambda} = \gamma(a\cdot b) = \gamma(b) = a^{\lambda}$ implies that $\lambda \equiv 0 \Mod{2^{n-1}}$. Thus, $\lambda\in \{0, 2^{n-1}\}$ and so, $C\simeq \mathbb{Z}_{2}$.
		
		\vspace{.2cm}
		
		\n Now, let $(\alpha, \beta, \delta)\in Y$ be such that $\alpha(b) = b^{i}, \beta(a) = b^{j}$, and $\delta(a) = a^{r}$, where $ i\in \{1,3\}$, $0\le j\le 3$ and $0\le r\le 2^{n}-1$ and $r$ is odd.  Using the Lemma \ref{l2} $(ii)$, $\beta(kk^{\prime}) = \beta(k)(\delta(k)\cdot \beta(k^{\prime}))$ holds, for all $k,k^{\prime}\in K$. Now, using $\delta(kk^{\prime}) = \delta(k)^{\beta(k^{\prime})}\delta(k^{\prime})$, we get
		
		\begin{equation*}
		\delta(a^{l}) = \left\{\begin{array}{ll}
		a^{(l-1)(jt+r) + r}, & \text{if}\; l \; \text{is odd} \\
		a^{l(jt + r)}, & \text{if}\; l \; \text{is even} 
		\end{array} \right..
		\end{equation*}
		Finally, using $\delta(k^{h}) = \delta(k)^{\alpha(h)}$, $a^{2it+r} = (a^{r})^{b^{i}} = \delta(a)^{\alpha(b)} = \delta(a^{b}) = \delta(a^{2t+1}) = a^{2t(jt+r)+r}$. Thus, $2t(jt+r-i)\equiv 0 \Mod{2^{n}}$ which implies that
		\begin{equation*}
		\begin{array}{ll}
		r\equiv i \Mod{4}, & \text{if}\; t\in \{2^{n-3}, 3\cdot 2^{n-3}, 5\cdot 2^{n-3}, 7\cdot 2^{n-3}\}\; \text{and}\; n\ge 5\\
		r\equiv i+2j \Mod{4}, & \text{if}\; t\in \{2^{n-3}, 3\cdot 2^{n-3}, 5\cdot 2^{n-3}, 7\cdot 2^{n-3}\} \; \text{and}\; n=4\\
		r\equiv i \Mod{2}, & \text{if}\; t\in \{ 2^{n-2}, 3\cdot 2^{n-2}\}
		\end{array}.
		\end{equation*}
		\n Now, if $j\in \{0,2\}$, then $r \equiv i \Mod{4}$ and if $j\in \{1,3\}$, then $r \equiv i$ or $i+2 \Mod{4}$. Thus, for all $\beta \in CrossHom(K,H)$, the choices for the maps $\alpha$ and $\delta$ are, $\alpha_{i}(b) = b^{i}$ and $\delta_{r}(a) = a^{r}$, where $i\in \{1,3\}$ and $r \in U(m)$. Note that, if $\begin{pmatrix}
		\alpha & \beta\\
		0 & \delta
		\end{pmatrix}\in F$, then 
		\[\begin{pmatrix}
		\alpha & \beta\\
		0 & \delta
		\end{pmatrix} = \begin{pmatrix}
		\alpha & 0\\
		0 & \delta
		\end{pmatrix}\begin{pmatrix}
		1 & \alpha^{-1}\beta\\
		0 & 1
		\end{pmatrix}\in MB.\]
		Clearly, $M\cap B = \{1\}$ and $M$ normalizes $B$. So, $B\triangleleft F$ and $F = B\rtimes M$.	Therefore, $Y \simeq B \rtimes M \simeq \mathbb{Z}_{4}\rtimes (\mathbb{Z}_{2}  \times U(m))$. Using the Lemma \ref{l2} $(v) - (vii)$,
		\begin{equation}\label{s3e1}
		\begin{pmatrix}
		1 & 0\\
		\gamma & 1
		\end{pmatrix}\begin{pmatrix}
		\alpha & \beta \\
		0 & \delta
		\end{pmatrix}\begin{pmatrix}
		1 & 0\\
		\gamma & 1
		\end{pmatrix}^{-1} = \begin{pmatrix}
		\alpha  & \beta\\
		\gamma\alpha + (\gamma\beta + \delta)(-\gamma) & \gamma\beta+\delta
		\end{pmatrix}.
		\end{equation}
		
		\n Now, $(\gamma\alpha + (\gamma\beta + \delta)(-\gamma))(b) = \gamma\alpha(b)(\gamma\beta + \delta)(-\gamma)(b) = \gamma(b^{i})(\gamma\beta+\delta)(a^{-\lambda})$ $= a^{i\lambda}\gamma(\beta(a^{-\lambda}))\delta(a^{-\lambda}) = a^{i\lambda}\gamma(1)a^{-\lambda(jt+r)} = a^{\lambda(i-jt-r)} = 1$. Thus, $(\gamma\beta + \delta)(-\gamma) = 0$. Also, one can easily observe that $(\alpha,\beta, \gamma\beta+\delta)\in Y$. Therefore, by the Equation (\ref{s3e1}), \[\begin{pmatrix}
		1 & 0\\
		\gamma & 1
		\end{pmatrix}\begin{pmatrix}
		\alpha & \beta \\
		0 & \delta
		\end{pmatrix}\begin{pmatrix}
		1 & 0\\
		\gamma & 1
		\end{pmatrix}^{-1} = \begin{pmatrix}
		\alpha  & \beta\\
		0 & \gamma\beta+\delta
		\end{pmatrix} \in F.\]
		\n So, $F \triangleleft \mathcal{A}$. Clearly, $F \cap C = \{1\}$. Also, if $\begin{pmatrix}
		\alpha & \beta\\
		\gamma & \delta
		\end{pmatrix}\in \mathcal{A}$, then 
		\[\begin{pmatrix}
		\alpha & \beta\\
		\gamma & \delta
		\end{pmatrix} = \begin{pmatrix}
		\alpha & \beta\\
		0 & \delta
		\end{pmatrix}\begin{pmatrix}
		1 & 0\\
		\delta^{-1}\gamma & 1
		\end{pmatrix}\in FC.\]		
	\n	 Hence, $\mathcal{A} = F\rtimes C$ and so, $Aut(G) \simeq F \rtimes C\simeq (\mathbb{Z}_{4}\rtimes(\mathbb{Z}_{2}\times (\mathbb{Z}_{2}\times\mathbb{Z}_{2^{n-2}})))\rtimes \mathbb{Z}_{2}$.
		
		\vspace{.2cm}
		
		\n \textit{$Case(ii)$}. Let $t$ be odd. Then $\gcd(t,m) = 1$. Hence, the result follows from the Theorem \ref{t2}.
	\end{proof}
	\n Now, we will discuss the structure of the automorphism group $Aut(G)$ in the case when $\gcd(t,m) > 1$. 
	\begin{theorem}\label{t4}
		Let $m=4q$, where $q>1$ is odd and $\gcd(t,m) =2^{i}d$, where $i\in \{0,1,2\}$, and $d$ divides $q$. Then $Aut(G)\simeq (\mathbb{Z}_{\frac{m}{2d}}\rtimes(\mathbb{Z}_{2}\times U(m)))\rtimes \mathbb{Z}_{2}$.
	\end{theorem}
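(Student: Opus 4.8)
The plan is to combine the arithmetic of the pair $(s,t)$ with the decomposition $Aut(G)\simeq E\rtimes B\simeq (C\rtimes M)\rtimes B$ furnished by Theorem~\ref{abcd}, exactly as in the proofs of Theorems~\ref{t2} and~\ref{t3}. The first and decisive step is to solve $(G1)$--$(G4)$ prime by prime. Since $\gcd(t,m)=2^{i}d$ with $d\mid q$ and $q$ odd, we have $\gcd(t,q)=d$. Fix an odd prime $p$ with $p^{a}\,\|\,q$; by $(G1)$ we get $p^{a}\mid(s-1)(s+1)$, and as $p$ is odd it divides exactly one of $s\pm1$. Feeding the two possibilities into $(G2)$ and $(G3)$ shows that either $p^{a}\mid s+1$ with $t\equiv-1\Mod{p^{a}}$ (so $p\nmid d$), or $p^{a}\mid s-1$ with $p^{a}\mid t$ (so $p^{a}\mid d$). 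Hence $d$ is precisely the product of the prime powers $p^{a}\,\|\,q$ with $p\mid s-1$, and in either case $s-2t-1\equiv 0\Mod{q}$ and $\gcd((s+1)/2,\,q)=q/d$.

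Next I would parametrise $X$ as before: $\alpha(b)=b^{i}$ with $i\in\{1,3\}$ (Lemma~\ref{l2}$(iv)$), $\gamma(b)=a^{\lambda}$ with $\lambda$ even (Lemma~\ref{l2}$(iii)$), and $\delta(a)=a^{r}$ with $r\in U(m)$ (Lemma~\ref{l2}$(i)$). To recognise the factor $M\simeq\mathbb{Z}_{2}\times U(m)$ I would verify that the defining relation of $Z$, namely $\delta(k)^{\alpha(h)}=\delta(k^{h})$, holds for every such $\alpha$ and $\delta$. Expanding both sides with Lemma~\ref{l1} reduces it to $(r-1)(s-2t-1)\equiv 0\Mod{m}$; since $r-1$ and $s-2t-1$ are both even and $s-2t-1\equiv 0\Mod{q}$ by the first step, this congruence is automatic. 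Thus all $r\in U(m)$ and both $i\in\{1,3\}$ are admissible, giving $M\simeq\mathbb{Z}_{2}\times U(m)$.

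To compute $C$ I would count the admissible $\gamma\in R$. With $\alpha=1=\delta$ the crossed-homomorphism relation gives $\gamma(b^{3})=a^{\lambda(s+2)}$, and the constraint $\gamma(a\cdot b)=\gamma(b)$ becomes $\lambda(s+1)\equiv 0\Mod{m}$ with $\lambda$ even. Writing $\lambda=2\mu$ and $s+1=2\sigma$ turns this into $\mu\sigma\equiv 0\Mod{q}$ for $\mu\in\{0,\dots,2q-1\}$, which has exactly $2\gcd(\sigma,q)=2q/d$ solutions by the first step; since the group law on these matrices is addition in $\lambda$, $C$ is cyclic of order $m/(2d)$, i.e.\ $C\simeq\mathbb{Z}_{m/(2d)}$. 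For $B$ I would invoke Lemma~\ref{l3}: the same case analysis gives $2t(1+s)\equiv 0\Mod{m}$, while $s+1$ even forces $\gcd(s+1,m/2)\neq 1$, so $Im(\beta)=\langle b^{2}\rangle$ and $B\simeq\mathbb{Z}_{2}$. Feeding $C$, $M$ and $B$ into Theorem~\ref{abcd} then yields $Aut(G)\simeq(\mathbb{Z}_{m/(2d)}\rtimes(\mathbb{Z}_{2}\times U(m)))\rtimes\mathbb{Z}_{2}$.

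The main obstacle is the first step: extracting from $(G1)$--$(G4)$ the exact prime-by-prime dichotomy for $s\pm1$ and its precise link to $d=\gcd(t,q)$. Every downstream claim — the order $m/(2d)$ of $C$, the fullness of $M$, and the size of $B$ — rests on the identity $\gcd((s+1)/2,q)=q/d$ and on the single congruence $s-2t-1\equiv 0\Mod{q}$, so the care must be concentrated there. The remaining verifications are the routine bijectivity checks and conjugation computations already performed in Theorems~\ref{t2}, \ref{t3} and~\ref{abcd}.
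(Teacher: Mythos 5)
Your proposal is correct and reaches the theorem by the same endgame as the paper: the decomposition $Aut(G)\simeq E\rtimes B\simeq (C\rtimes M)\rtimes B$ from Theorem \ref{abcd}, Lemma \ref{l3} for $B\simeq\mathbb{Z}_{2}$, and the counts $C\simeq\mathbb{Z}_{m/(2d)}$, $M\simeq\mathbb{Z}_{2}\times U(m)$. Where you genuinely differ is in the arithmetic core and in how admissibility is organized. The paper writes $q=du$, uses $(G2)$ to get $s=lu-1$ with $l$ even, deduces $\gcd(l,d)=1$ and $u\frac{l}{2}-1\equiv 0\Mod{d}$ from $(G1)$ and $(G3)$, and then works with the single coupled congruence $\lambda(s+1)\equiv(r-1)(s-2t-1)\Mod{m}$ extracted from the $X$-condition $\delta(a)^{\alpha(b)}\gamma(b)=\gamma(a\cdot b)\delta(a^{b})$, showing its right-hand side vanishes so that only $\lambda\equiv 0\Mod{2d}$ survives. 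You instead localize at each odd prime power $p^{a}\,\|\,q$ and obtain the clean dichotomy ($p^{a}\mid s+1$ and $p^{a}\mid t+1$) or ($p^{a}\mid s-1$ and $p^{a}\mid t$), which immediately gives $d=\gcd(t,q)=\prod_{p\mid s-1}p^{a}$, $s-2t-1\equiv 0\Mod{q}$, $2t(s+1)\equiv 0\Mod{m}$ and $\gcd((s+1)/2,q)=q/d$; you then verify the $R$- and $Z$-conditions separately and let Theorem \ref{abcd} assemble $E=C\rtimes M$. Your CRT analysis is arguably more transparent — it explains exactly why $d$ is the obstruction and makes $|C|=m/(2d)$ immediate — while the paper's parametrization feeds directly into its constraint equation without appealing to prime factorizations. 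One small gloss to repair in your second step: the reduction of $\delta(k)^{\alpha(h)}=\delta(k^{h})$ to $(r-1)(s-2t-1)\equiv 0\Mod{m}$ is immediate only for $\alpha(b)=b$; for $\alpha(b)=b^{3}$ the expansion via Lemma \ref{l1} picks up the extra term $2t(s+1)$, which vanishes $\Mod{m}$ by your own first step, so the conclusion that all of $\mathbb{Z}_{2}\times U(m)$ is admissible stands.
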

	\begin{proof}
		Let $q=du$, for some integer $u$. Then, using $(G2)$, $s\equiv -1 \Mod{u}$ which implies that $s = lu-1$, where $1\le l\le 4d$. Since, $\gcd(s,\frac{m}{2}) = 1$, $s$ is odd and so, $l$ is even. Using $(G1)$ and $(G3)$, we get $l(u\frac{l}{2}-1)\equiv 0 \Mod{d}$ and $t+1\equiv u\frac{l}{2} \Mod{q}$. Now, one can easily observe that $\gcd(l,d) = 1$ which implies that $u\frac{l}{2}-1 \equiv 0 \Mod{d}$. Thus, $2t(s+1)\equiv 2ltu\equiv 0 \Mod{m}$ and $\gcd(s+1, \frac{m}{2})\ne 1$. Therefore, using the Lemma \ref{l3}, $B\simeq \mathbb{Z}_{2}$. 
		
		\vspace{.2cm}
		
		\n Let $(\alpha,\gamma,\delta)\in X$ be such that $\alpha(b) = b^{i}$, $\gamma(b) = a^{\lambda}$ and $\delta(a) = a^{r}$, where $i\in \{1,3\}$, $0\le \lambda \le m-1$, $\lambda$ is even, and $r\in U(m)$. Then, using $\gamma(hh^{\prime}) = \gamma(h)^{\alpha(h^{\prime})}\gamma(h^{\prime})$, we have $\gamma(b^{2}) = a^{\lambda(s+1)}$, $\gamma(b^{3}) = a^{\lambda(s+2)}$, and $\gamma(b^{4}) = 1$. Now, using $\delta(a)^{\alpha(b)}\gamma(b) = \gamma(a\cdot b)\delta(a^{b})$ and the fact that $2t(s+1)\equiv 0 \Mod{m}$, $a^{\lambda(s+2)+(2t+1)r} = \gamma(b^{3})\delta(a^{2t+1}) = \gamma(a\cdot b)\delta(a^{b}) = \delta(a)^{\alpha(b)}\gamma(b) = (a^{r})^{b^{i}}a^{\lambda} = a^{2t+1+(r-1)s +\lambda + \frac{i-1}{2}2t(s+1)} = a^{2t+1+(r-1)s +\lambda}$. Thus
		\begin{equation}\label{s3e3}
		\lambda(s+1) \equiv (r-1)(s-2t-1) \Mod{m}.
		\end{equation}
		 \n Since $2t(s+1)\equiv 0 \Mod{m}$, using $(G3)$, we get $2(s-2t-1)\equiv 0 \Mod{m}$. Therefore, by the Equation (\ref{s3e3}), $\lambda lu \equiv 0 \Mod{m}$. Using the Lemma \ref{l2} $(iii)$, we get $\lambda \equiv 0 \Mod{2d}$. Thus, using the Theorem \ref{abcd}, $X\simeq \mathbb{Z}_{\frac{m}{2d}}\rtimes(\mathbb{Z}_{2}\times U(m))$. Hence, $Aut(G)\simeq E \rtimes B\simeq  (\mathbb{Z}_{\frac{m}{2d}}\rtimes(\mathbb{Z}_{2}\times U(m)))\rtimes \mathbb{Z}_{2}$. 
	\end{proof}
		
%		If $\gamma\in R$, then $\gamma(hh^{\prime}) = \gamma(h)^{h^{\prime}}\gamma(h^{\prime})$, and $\gamma(k\cdot h) = \gamma(h)$. Using the Lemma \ref{l2} $(iii)$, let us define $\gamma(b) = a^{\lambda}$, where $\lambda$ is even. Then, using $\gamma(hh^{\prime}) = \gamma(h)^{h^{\prime}}\gamma(h^{\prime})$, $\gamma(b^{2}) = a^{\lambda(s+1)}, \gamma(b^{3}) = a^{\lambda(s+2)}$ and $\gamma(b^{4}) = a^{2\lambda(s+1)} = 1$ which implies that $l\lambda \equiv 0\;(\mod\; 2d)$. Now, using $\gamma(k\cdot h) = \gamma(h)$, $a^{\lambda(s+2)}= a^{\lambda}$ which implies that $\lambda(s+1) \equiv 0\;(\mod\;m)$, which is true for all $\lambda\equiv 0\;(\mod\; 2d)$. Thus, $C\simeq \mathbb{Z}_{\frac{m}{2d}}$.
%		
%		\vspace{.2cm}
%		
%		\n Now, if $\delta\in S$, then $\delta(k)^{h} = \delta(k^{h})$. Let us define $\delta\in Aut(K)$ by $\delta(a) = a^{r}$, where $\gcd(r,m) = 1$. Now, using $\delta(k)^{h} = \delta(k^{h})$ and the Lemma \ref{l1}, $a^{(2t+1)r} = \delta(a^{b}) = \delta(a)^{b} = (a^{r})^{b} = a^{2t+1+(r-1)s}$. Thus, $(r-1)(2t+1-s)\equiv 0\;(\mod\; m)$ which is true for all $r$. Therefore, $D\simeq Aut(K)\simeq U(m)$. Since, $B\simeq \mathbb{Z}_{2}$, using the Theorem \ref{abcd}, $Aut(G)\simeq E \rtimes B\simeq  (\mathbb{Z}_{\frac{m}{2d}}\rtimes(\mathbb{Z}_{2}\times U(m)))\rtimes \mathbb{Z}_{2}$. Hence, $|Aut(G)| =  \frac{m}{2d}\times \phi(m)\times 2\times2 = 2\frac{m}{d}\phi(m)$.

	\begin{theorem}
		Let $m=2q$, where $q>1$ is odd and $\gcd(t,m) =2^{i}d$, where $i\in \{0,1\}$, and $d$ divides $q$. Then $Aut(G)\simeq  (\mathbb{Z}_{\frac{m}{2d}}\rtimes(\mathbb{Z}_{2}\times U(m)))\rtimes \mathbb{Z}_{2}$.
	\end{theorem}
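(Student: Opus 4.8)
The plan is to reproduce the architecture of the proof of Theorem \ref{t4}: using Theorem \ref{abcd} I would write $Aut(G)\simeq E\rtimes B$ with $E\simeq X\simeq \mathbb{Z}_{\frac{m}{2d}}\rtimes(\mathbb{Z}_{2}\times U(m))$ and $B\simeq\mathbb{Z}_{2}$, so the whole task reduces to identifying $B$ and $X$ in the present setting. First I would translate the defining congruences. Writing $q=du$, condition $(G2)$ reads $4t(s+1)\equiv 0\Mod{2q}$; since $q$ is odd this gives $t(s+1)\equiv 0\Mod{q}$. Because $G$ is a genuine Zappa-Sz\'{e}p (not semidirect) product we have $2t\not\equiv 0\Mod m$, hence $t\not\equiv 0\Mod q$, so $\gcd(t,q)=d<q$, $u>1$, and $t=dt'$ with $\gcd(t',u)=1$. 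It then follows that $s\equiv -1\Mod{u}$, i.e. $s=lu-1$ for some $l$ with $1\le l\le 2d$.

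For the factor $B$ I would invoke Lemma \ref{l3}. From $t(s+1)\equiv 0\Mod q$ I obtain $2t(1+s)\equiv 0\Mod{m}$, and since $u\mid s+1$ and $u\mid q=\frac{m}{2}$ with $u>1$, also $\gcd(s+1,\frac{m}{2})\ne 1$. Hence $Im(\beta)=\langle b^{2}\rangle$ and $B\simeq\mathbb{Z}_{2}$.

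To compute $X$ I would parametrise $(\alpha,\gamma,\delta)\in X$ by $\alpha(b)=b^{i}$ with $i\in\{1,3\}$, $\gamma(b)=a^{\lambda}$ with $\lambda$ even (Lemma \ref{l2} $(iii)$), and $\delta(a)=a^{r}$ with $r\in U(m)$, so $r$ is odd. Exploiting the relation $\delta(a)^{\alpha(b)}\gamma(b)=\gamma(a\cdot b)\delta(a^{b})$ together with Lemma \ref{l1}, the same computation as in Theorem \ref{t4} reduces this to $\lambda(s+1)\equiv(r-1)(s-2t-1)\Mod{m}$. Combining $2t(s+1)\equiv 0\Mod m$ with $(G3)$ yields $2(s-2t-1)\equiv 0\Mod{m}$, and since $r$ is odd the factor $r-1$ is even, so the right-hand side vanishes and the constraint collapses to $\lambda(s+1)\equiv 0\Mod{m}$, equivalently $\frac{\lambda}{2}\,l\equiv 0\Mod{d}$.

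The heart of the argument, and the step I expect to be the main obstacle, is turning this into the clean condition $\lambda\equiv 0\Mod{2d}$ that produces the cyclic factor $\mathbb{Z}_{\frac{m}{2d}}$; here the parity bookkeeping differs from Theorem \ref{t4} because $\frac{m}{2}=q$ is now odd, so $s$ need not be odd and the naive count of admissible $\lambda$ is $u\gcd(l,d)$. For the count to be exactly $u=\frac{m}{2d}$ I must show $\gcd(l,d)=1$: if an odd prime $p$ divided both $l$ and $d$, then $p\mid lu=s+1$ would give $s\equiv -1\Mod p$, hence $s-1\equiv -2\not\equiv 0\Mod p$, and $(G3)$ would force $p\mid t+1$; but $p\mid d\mid t$ gives $p\mid t$, a contradiction. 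With $\gcd(l,d)=1$ the condition becomes $\lambda\equiv 0\Mod{2d}$, so $\gamma$ ranges over a cyclic group of order $\frac{m}{2d}$ and $C\simeq\mathbb{Z}_{\frac{m}{2d}}$, while $\alpha$ and $\delta$ contribute $M\simeq\mathbb{Z}_{2}\times U(m)$. By Theorem \ref{abcd}, $X\simeq C\rtimes M\simeq\mathbb{Z}_{\frac{m}{2d}}\rtimes(\mathbb{Z}_{2}\times U(m))$, and therefore $Aut(G)\simeq E\rtimes B\simeq(\mathbb{Z}_{\frac{m}{2d}}\rtimes(\mathbb{Z}_{2}\times U(m)))\rtimes\mathbb{Z}_{2}$, as claimed.
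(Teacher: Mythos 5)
Your proposal is correct and takes essentially the same route as the paper, whose entire proof of this theorem is the remark that it ``follows on the lines of the proof of the Theorem \ref{t4}'': identify $B\simeq\mathbb{Z}_{2}$ via Lemma \ref{l3}, reduce the constraint on $(\alpha,\gamma,\delta)\in X$ coming from $\delta(a)^{\alpha(b)}\gamma(b)=\gamma(a\cdot b)\delta(a^{b})$ to $\lambda(s+1)\equiv 0 \Mod{m}$ using $2(s-2t-1)\equiv 0\Mod{m}$, and conclude $X\simeq \mathbb{Z}_{\frac{m}{2d}}\rtimes(\mathbb{Z}_{2}\times U(m))$ before invoking Theorem \ref{abcd}. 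You have moreover filled in precisely the details the paper leaves implicit in this adaptation --- the fact that $s$ need no longer be odd since $\frac{m}{2}=q$ is odd, and the verification that $\gcd(l,d)=1$ (argued via $(G3)$ and $p\mid d\mid t$ rather than via the parity of $l$ as in Theorem \ref{t4}) --- and these check out.
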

	\begin{proof}
		Follows on the lines of the proof of the Theorem \ref{t4}.
	\end{proof}
	
	\begin{theorem}
		Let $m= 2^{n}q$, $t$ be even and $\gcd(m,t) = 2^{i}d$, where $1\le i\le n$, $n\ge 3$, $q>1$ and $d$ divides $q$. Then 
		\begin{align*}
		Aut(G) \simeq \left\{\begin{array}{ll}
		(\mathbb{Z}_{4}\rtimes(\mathbb{Z}_{2}\times U(m)))\rtimes \mathbb{Z}_{2}, & \text{if}\; d=q\\
		\mathbb{Z}_{2}\times (\mathbb{Z}_{\frac{2q}{d}}\rtimes(\mathbb{Z}_{2}\times U(m))), & \text{if}\; d\ne q\; \text{and}\; n-2\le i \le n\\
		\mathbb{Z}_{\frac{4q}{d}}\rtimes(\mathbb{Z}_{2}\times U(m)), & \text{if}\; d\ne q \; \text{and}\; i = n-3
		\end{array} \right..
		\end{align*}
	\end{theorem}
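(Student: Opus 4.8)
The plan is to follow the template of Theorems~\ref{t3} and~\ref{t4}: first solve $(G1)$--$(G4)$ to determine the admissible values of $s$ in terms of $n$, $q$ and the parameter $i$, then compute the three building blocks $B$, $C$ and $M=Z$ of the decomposition $Aut(G)\simeq(C\rtimes M)\rtimes B$ from Theorem~\ref{abcd}, and finally reassemble them. Throughout I use, as in the earlier sections, that $\alpha\in Aut(H)\simeq\mathbb{Z}_{2}$ acts by $b\mapsto b^{i}$ with $i\in\{1,3\}$, that $\delta\in Aut(K)\simeq U(m)$ acts by $a\mapsto a^{r}$, that $\gamma(b)=a^{\lambda}$ with $\lambda$ even, and that $Im(\beta)\subseteq\langle b^{2}\rangle$.

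First I would reduce the congruences. Since $t$ is even, $t+1$ is a unit modulo $2^{n}$, so $(G3)$ forces $s\equiv1\Mod{2^{n-1}}$; in particular $s$ is odd and $v_{2}(s+1)=1$. Substituting this into $(G2)$, the $2$-part of $4t(s+1)$ has valuation $i+3$, so $(G2)$ is solvable only when $i\ge n-3$, which is exactly what separates the case $i=n-3$ from $n-2\le i\le n$. The odd parts of $(G1)$ and $(G3)$ give $s^{2}\equiv1\Mod{q}$ and $(t+1)(s-1)\equiv0\Mod{q}$; when $d=q$, i.e. $q\mid t$, the latter forces $s\equiv1\Mod{q}$ and hence $s\equiv1\Mod{m/2}$, so $s$ acts trivially on $\langle a^{2}\rangle\supseteq Im(\gamma)$, whereas $d\ne q$ leaves $s\not\equiv1\Mod{q}$.

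Next I would read off the factors and assemble. By Lemma~\ref{l3}, $B\simeq\mathbb{Z}_{2}$ exactly when $2t(s+1)\equiv0\Mod{m}$ with $\gcd(s+1,m/2)\ne1$; since $v_{2}(2t(s+1))=i+2$, the $2$-part holds iff $i\ge n-2$, so $B$ is trivial when $i=n-3$ and $B\simeq\mathbb{Z}_{2}$ otherwise. As in Theorems~\ref{t2}--\ref{t4}, the condition $\delta(k)^{\alpha(h)}=\delta(k^{h})$ holds for every $r\in U(m)$ and both values of $i$, giving $M\simeq\mathbb{Z}_{2}\times U(m)$. Solving $\gamma(hh')=\gamma(h)^{\alpha(h')}\gamma(h')$ together with $\delta(k)^{\alpha(h)}\gamma(h)=\gamma(k\cdot h)\delta(k^{h})$ produces a congruence on $\lambda$ that, using $2\mid\lambda$, collapses to $\lambda\equiv0\Mod{2^{n-1}d}$ when $n-2\le i\le n$ and to $\lambda\equiv0\Mod{2^{n-2}d}$ when $i=n-3$, so $C\simeq\mathbb{Z}_{2q/d}$ or $\mathbb{Z}_{4q/d}$ respectively. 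When $d=q$ the relation $s\equiv1\Mod{m/2}$ makes $\gamma$ effectively a homomorphism, so the situation is that of Theorem~\ref{t3}$(i)$: $\beta$ now ranges over all admissible crossed homomorphisms and contributes a $\mathbb{Z}_{4}$, and the decomposition $Aut(G)\simeq F\rtimes C$ with $F\simeq\mathbb{Z}_{4}\rtimes(\mathbb{Z}_{2}\times U(m))$ and $C\simeq\mathbb{Z}_{2}$ yields $(\mathbb{Z}_{4}\rtimes(\mathbb{Z}_{2}\times U(m)))\rtimes\mathbb{Z}_{2}$. When $d\ne q$ I would instead use $Aut(G)\simeq E\rtimes B$ with $E=X\simeq C\rtimes M$: for $i=n-3$ the factor $B$ is trivial and $E\simeq\mathbb{Z}_{4q/d}\rtimes(\mathbb{Z}_{2}\times U(m))$, while for $n-2\le i\le n$ one verifies that $B\simeq\mathbb{Z}_{2}$ is central, so the product is direct and $Aut(G)\simeq\mathbb{Z}_{2}\times(\mathbb{Z}_{2q/d}\rtimes(\mathbb{Z}_{2}\times U(m)))$.

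The main obstacle I expect is the $\lambda$-bookkeeping together with the triviality of the $B$-action when $n-2\le i\le n$: one must track the $2$-adic valuations accurately enough to distinguish $\lambda\equiv0\Mod{2^{n-1}d}$ from $\lambda\equiv0\Mod{2^{n-2}d}$, and then show that for $n\ge3$ the conjugation action of $E$ on $B$ degenerates to the identity, giving a \emph{direct} rather than a semidirect product. This last point is exactly where the theorem departs from the $m=4q$ case of Theorem~\ref{t4}; reconciling the two global decompositions, $F\rtimes C$ for $d=q$ against $E\rtimes B$ for $d\ne q$ (so that the large cyclic factor is supplied by $\beta$ in one regime and by $\gamma$ in the other), also needs care.
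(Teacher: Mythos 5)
Your proposal is correct and follows essentially the same route as the paper's proof: the same reduction of $(G1)$--$(G4)$ (giving $s\equiv 1\Mod{2^{n-1}}$, existence only for $i\ge n-3$, and $s\equiv 1\Mod{\frac{m}{2}}$ when $d=q$), the same decomposition into the blocks of Theorem \ref{abcd} ($F\rtimes C$ modeled on Theorem \ref{t3}$(i)$ when $d=q$, and $E$ together with $B$ otherwise), and the same factor computations ($B\simeq\mathbb{Z}_{2}$ iff $i\ge n-2$ via Lemma \ref{l3}, $M\simeq \mathbb{Z}_{2}\times U(m)$, and $\lambda\equiv 0\Mod{2^{n-1}d}$ resp. $\lambda\equiv 0 \Mod{2^{n-2}d}$, so that $C\simeq \mathbb{Z}_{\frac{2q}{d}}$ resp. $\mathbb{Z}_{\frac{4q}{d}}$); your $2$-adic valuation bookkeeping is an equivalent, somewhat cleaner, packaging of the paper's explicit parametrizations $s=lu-1$, $t\equiv 2^{i}d\Mod{2^{n-2}q}$. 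The one point where you genuinely add something is the middle case $d\ne q$, $n-2\le i\le n$: the paper's proof simply invokes the argument of Theorem \ref{t2} and ends with $Aut(G)\simeq E\rtimes B\simeq (\mathbb{Z}_{\frac{2q}{d}}\rtimes(\mathbb{Z}_{2}\times U(m)))\rtimes\mathbb{Z}_{2}$, which agrees with the direct product $\mathbb{Z}_{2}\times(\mathbb{Z}_{\frac{2q}{d}}\rtimes(\mathbb{Z}_{2}\times U(m)))$ asserted in the statement only because the conjugation action of $B$ on $E$ happens to be trivial --- a fact the paper never verifies. Your plan does verify it, and the check goes through: by the conjugation formula in the proof of Theorem \ref{abcd}, the action is trivial iff $\gamma\beta=0$, i.e.\ iff $\lambda(s+1)\equiv 0\Mod{m}$, and with $\lambda=2^{n-1}d\lambda^{\prime}$ and $s+1=lu$, $l=2l_{0}$, one gets $\lambda(s+1)=2^{n}q\lambda^{\prime}l_{0}\equiv 0\Mod{m}$. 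One small slip in your first paragraph: the bound $i\ge n-3$ coming from $(G2)$ separates the existent cases from the vacuous ones $1\le i\le n-4$ (the paper's Case $(iv)$), not $i=n-3$ from $n-2\le i\le n$; that finer split comes from whether $2t(s+1)\equiv 0\Mod{m}$, which you do apply correctly when computing $B$.
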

	\begin{proof}
		 We consider the following four cases to find the structure of $Aut(G)$.
		
		\vspace{.2cm}
		
		\n \textit{Case($i$):} Let $d=q$ and $\gcd(t+1,m) = u$. Since, $t+1$ is odd, $u$ is odd and $u$ divides $q$. Thus, $u$ divides $t$ and so, $u = 1$. Therefore, using $(G2)$ and $(G3)$, $s\equiv 1 \Mod{\frac{m}{2}}$ and $t \equiv 0 \Mod{\frac{m}{8}}$. By the similar argument used in the proof of the Theorem \ref{t3} $(i)$, we get, $Aut(G)\simeq F\rtimes C\simeq (\mathbb{Z}_{4}\rtimes(\mathbb{Z}_{2}\times U(m)))\rtimes \mathbb{Z}_{2}$.
		
		\vspace{.2cm}
		
		\n \textit{Case($ii$):} Let $n-2\le i\le n$ and $q=du$, for some odd integer $u$. Then using $(G2)$, $s\equiv -1 \Mod{u}$ and so, $s = lu-1$, where $0\le l\le 2^{n}d$. Since, $\gcd(s,\frac{m}{2}) = 1$, $s$ is odd and so, $l$ is even. Now, using $(G1)$, $\frac{l}{2}(\frac{l}{2}u-1)\equiv 0 \Mod{2^{n-3}d}$ and by $(G3)$, $t\equiv \frac{l}{2}u-1 \Mod{2^{n-2}q}$. Since, $t$ is even, $\frac{l}{2}$ is odd and $\gcd(\frac{l}{2},d) =1$. Thus, $\frac{l}{2}u\equiv 1 \Mod{2^{n-3}d}$ and $t\equiv 2^{i}d \Mod{2^{n-2}q}$. One can easily observe that $2t(s+1)\equiv 0 \Mod{m}$. Therefore, using the similar argument as in the proof of the Theorem \ref{t2}, we get, $Aut(G)\simeq E \rtimes B\simeq  (\mathbb{Z}_{\frac{2q}{d}}\rtimes(\mathbb{Z}_{2}\times U(m)))\rtimes \mathbb{Z}_{2}$.
		
		\vspace{.2cm}
		
		\n \textit{Case($iii$):} Let $i = n-3$, $d\ne q$ and $q=du$, for some odd integer $u$. Then using $(G2)$, $s\equiv -1 \Mod{2u}$, that is, $s = 2lu-1$, where $1\le l\le 2^{n-1}d$. Now, using $(G1)$ and $(G3)$, $l(lu-1)\equiv 0 \Mod{2^{n-3}d}$ and $(t+1)(lu-1) \equiv 0 \Mod{2^{n-2}q}$. If $l$ is even, then $t \equiv lu-1 \Mod{2^{n-2}q}$ gives that $t$ is odd, which is a contradiction. Therefore, $l$ is odd. Using $(t+1)(lu-1) \equiv 0 \Mod{2^{n-2}q}$, one can easily observe that $\gcd(l,d) = 1$. Then, $lu-1 = 2^{n-3}dl^{\prime}$ and $s = 2^{n-2}dl^{\prime}+1$, where $1\le l^{\prime}\le 8u$. Clearly, $\gcd(l^{\prime},u) = 1$. Thus, $(t+1)l^{\prime} \equiv 0 \Mod{2u}$. If $l^{\prime}$ is odd, then $(t+1) \equiv 0 \Mod{2u}$ which implies that $t$ is odd. So, $l^{\prime}$ is even and so, $t = uq^{\prime} -1$, $1\le q^{\prime} < 2^{n-1}d$, $q^{\prime}$ is odd as $t$ is even. Note that $s-2t-1 = 2^{n-2}dl^{\prime} -2t = 2^{n-2}d(l^{\prime} - \frac{t}{2^{n-3}d}) =  2^{n-2}d\left(\frac{lu-1}{2^{n-3}d} - \frac{uq^{\prime}-1}{2^{n-3}d}\right) = 2^{n-2}du\left(\frac{l-q^{\prime}}{2^{n-3}d}\right)$. 
		
		\vspace{.2cm}
		
		\n Let $(\alpha,\gamma,\delta)\in X$ be such that $\alpha(b) = b^{i}$, $\gamma(b) = a^{\lambda}$ and $\delta(a) = a^{r}$, where $i\in \{1,3\}$, $0\le \lambda \le m-1$, $\lambda$ is even and $r\in U(m)$. We consider two sub-cases based on the image of the map $\alpha$.
		
		\vspace{.2cm}
		
		\n \textit{Sub-case(i):} Let $\alpha(b) = b$. Then using $\delta(a)^{\alpha(b)}\gamma(b) = \gamma(a\cdot b)\delta(a^{b})$,\\
		\n $a^{\lambda(s+2)+ (2t+1)r} = \gamma(a\cdot b)\delta(a^{b}) = \delta(a)^{b}\gamma(b) = (a^{r})^{b}a^{\lambda} = a^{2t+1+(r-1)s+\lambda}$ which implies that 
		\begin{equation*}
		\lambda(s+1)\equiv (r-1)(s-2t-1) \Mod{m}.
		\end{equation*}
		Therefore, $\lambda(2lu)\equiv 2^{n-2}du(r-1)\left(\frac{l-q^{\prime}}{2^{n-3}d}\right) \Mod{2^{n}q}$ which implies that $\lambda l\equiv 2^{n-3}d(r-1)\left(\frac{l-q^{\prime}}{2^{n-3}d}\right) \Mod{2^{n-1}d}$. Now, if $\lambda \equiv 0 \Mod{2^{n-2}d}$, then $r \equiv 1 \; \text{or}\; 3 \Mod{4}$ and vice-versa. Thus, in this sub-case, the choices for the maps $\gamma$ and $\delta$ are, $\gamma_{\lambda}(b) = a^{\lambda}$ and $\delta_{r}(a) = a^{r}$, where $\lambda$ is even and $\lambda \equiv 0 \Mod{2^{n-2}d}$, and $r\in U(m)$.
		
		\vspace{.2cm}
		
		\n \textit{Sub-case(ii):} Let $\alpha(b) = b^{3}$. Then using $\delta(a)^{\alpha(b)}\gamma(b) = \gamma(a\cdot b)\delta(a^{b})$,\\
		\n $a^{\lambda(s+2)+ (2t+1)r} = \gamma(a\cdot b)\delta(a^{b}) = \delta(a)^{b^{3}}\gamma(b) = (a^{r})^{b^{3}}a^{\lambda} = a^{4t+2ts+1+(r-1)s+\lambda}$ which implies that 
		\begin{equation*}
		(\lambda-2t)(s+1)\equiv (r-1)(s-2t-1) \Mod{m}.
		\end{equation*}
		Therefore, $2lu(\lambda-2t)\equiv 2^{n-2}du(r-1)\left(\frac{l-q^{\prime}}{2^{n-3}d}\right) \Mod{2^{n}q}$ which implies that $l(\lambda-2t)\equiv 2^{n-3}d(r-1)\left(\frac{l-q^{\prime}}{2^{n-3}d}\right) \Mod{2^{n-1}d}$. Now, if $\lambda \equiv 0 \Mod{2^{n-2}d}$, then $r \equiv 1 \; \text{or}\; 3 \Mod{4}$ and vice-versa. Thus, in this sub-case, the choices for the maps $\gamma$ and $\delta$ are, $\gamma_{\lambda}(b) = b^{\lambda}$ and $\delta_{r}(a) = a^{r}$, where $\lambda$ is even and $\lambda \equiv 0 \Mod{2^{n-2}d}$, and $r\in U(m)$.
		
		\vspace{.2cm}
		
		\n Thus combining both the \textit{sub-cases} $(i)$ and $(ii)$, we get, for all $\alpha\in Aut(H)$, the choices for the maps $\gamma$ and $\delta$ are, $\gamma_{\lambda}(b) = a^{\lambda}$ and $\delta_{r}(a) = a^{r}$,  where $\lambda$ is even and $\lambda \equiv 0 \Mod{2^{n-2}d}$, and $r\in U(m)$. Therefore, using the Theorem \ref{abcd}, $X \simeq \mathbb{Z}_{4\frac{q}{d}}\rtimes(\mathbb{Z}_{2}\times U(m))$. At last, since, $l$ is odd, $2t(s+1) \equiv 4tlu\not\equiv 0 \Mod{m}$. Therefore, using the Lemma \ref{l3}, $Im(\beta) = \{1\}$. Thus, $B$ is a trivial group. Hence, using the Theorem \ref{abcd}, $Aut(G)\simeq E\rtimes B \simeq \mathbb{Z}_{\frac{4q}{d}}\rtimes(\mathbb{Z}_{2}\times U(m))$.
		
		\vspace{.2cm}
		
		\n \textit{Case($iv$):} Let $1\le i \le n-4$. and $q=du$, for some odd integer $u$. Then using $(G2)$, $s\equiv -1 \Mod{2^{n-i-2}u}$, that is, $s = 2^{n-i-2}lu-1$, where $1\le l\le 2^{i+2}d$. Now, using $(G1)$ and $(G3)$, $l(2^{n-i-3}lu-1)\equiv 0 \Mod{2^{i}d}$ and $(t+1)(lu2^{n-i-3}-1) \equiv 0 \Mod{2^{n-2}q}$. Since, $n-i-3>0$, $lu2^{n-i-3}-1$ is odd. If $l$ is even, then $t \equiv lu2^{n-i-3}-1 \Mod{2^{n-2}q}$ gives that $t$ is odd, which is a contradiction. Now, if $l$ is odd, then Using $(t+1)(lu-1) \equiv 0 \Mod{2^{n-2}q}$, one can easily observe that $\gcd(l,d) = 1$. Thus, $2^{n-i-3}lu-1\equiv 0 \Mod{2^{i}d}$, which is absurd. Hence, there is no such $l$ exist and so, no such $t$ and $s$ exist and hence no group $G$ exists as the Zappa-Sz\'{e}p product of $H$ and $K$.	
	\end{proof}
	
	\begin{theorem}
		Let $m = 2^{n}q$, $t$ be odd and $\gcd(t,m) = d$, where $n\ge 4$ and $q$ is odd. Then 
		\begin{equation*}
		Aut(G) \simeq \left\{\begin{array}{ll}
		(\mathbb{Z}_{\frac{m}{2d}}\rtimes(\mathbb{Z}_{2}\times U(m)))\rtimes \mathbb{Z}_{2}, & \text{if}\; 2t(s+1)\equiv 0 \Mod{m}\\
		\mathbb{Z}_{\frac{m}{2d}}\rtimes(\mathbb{Z}_{2}\times U(m)), & \text{if}\; 2t(s+1)\not\equiv 0 \Mod{m}
		\end{array}	\right..
		\end{equation*}
	\end{theorem}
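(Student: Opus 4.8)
The plan is to apply Theorem \ref{abcd}, which gives $Aut(G)\simeq E\rtimes B$ with $E\simeq C\rtimes M$, and to compute the three building blocks $C$, $M$ and $B$ from the defining data of $G$. The governing simplification is that $t$ is odd, so $\gcd(t,2^{n})=1$ and hence $d=\gcd(t,m)=\gcd(t,q)$ divides the odd part $q$; this separates the $2$-power part of every congruence (which will be carried by $s\pm 1$ and by the relation $\gamma(b^{4})=1$) from its odd part (carried by $d$), and is what makes the two cases of the statement share the same factor $\mathbb{Z}_{\frac{m}{2d}}$.

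First I would pin down the admissible $s$ and $t$. From $(G4)$, $s$ is odd and coprime to $q$; feeding this into $(G1)$ forces $(s-1)(s+1)\equiv 0\Mod{2^{n-1}q}$, and combining with $(G2)$ and $(G3)$ determines $s$ and $t$ tightly enough to evaluate $2t(s+1)\Mod{m}$. The dichotomy in the statement is exactly whether $2t(s+1)\equiv 0\Mod{m}$; by Lemma \ref{l3} this is precisely the condition (together with $\gcd(s+1,\frac{m}{2})\ne 1$) under which $Im(\beta)=\langle b^{2}\rangle$, so that $B\simeq\mathbb{Z}_{2}$, whereas otherwise $B$ is trivial.

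Next I would compute $E\simeq X$ along the lines of Theorem \ref{t4}. Using Lemma \ref{l2}(i),(iii),(iv), a triple $(\alpha,\gamma,\delta)\in X$ is parametrized by $\alpha(b)=b^{i}$ with $i\in\{1,3\}$, $\gamma(b)=a^{\lambda}$ with $\lambda$ even, and $\delta(a)=a^{r}$ with $r\in U(m)$. The relation $\gamma(hh')=\gamma(h)^{\alpha(h')}\gamma(h')$ evaluates $\gamma(b^{2}),\gamma(b^{3})$ and yields $\gamma(b^{4})=1$, one constraint on $\lambda$; the compatibility condition $\delta(a)^{\alpha(b)}\gamma(b)=\gamma(a\cdot b)\delta(a^{b})$ yields a second, of the form $\lambda(s+1)\equiv(r-1)(s-2t-1)\Mod{m}$ when $\alpha(b)=b$, with the analogous $(\lambda-2t)(s+1)\equiv(r-1)(s-2t-1)\Mod{m}$ when $\alpha(b)=b^{3}$. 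The crux is to solve these: when $2t(s+1)\equiv 0\Mod{m}$ one gets $2(s-2t-1)\equiv 0\Mod{m}$ from $(G3)$, so the right-hand side vanishes (as $r-1$ is even) and the condition reduces to a congruence on $\lambda$ alone; in the remaining regime the residual coupling only asks $r\equiv 1$ or $3\Mod{4}$, which holds automatically for $r\in U(m)$. In either case the admissible $\lambda$ are exactly those with $\lambda\equiv 0\Mod{2d}$, so $C\simeq\mathbb{Z}_{\frac{m}{2d}}$, while $\alpha$ contributes $Aut(H)\simeq\mathbb{Z}_{2}$ and $\delta$ ranges over all of $U(m)\simeq Aut(K)$, giving $M\simeq\mathbb{Z}_{2}\times U(m)$ and $E\simeq\mathbb{Z}_{\frac{m}{2d}}\rtimes(\mathbb{Z}_{2}\times U(m))$.

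Finally, substituting $B$ into $Aut(G)\simeq E\rtimes B$ from Theorem \ref{abcd} produces the two displayed isomorphism types. I expect the main obstacle to be the congruence analysis of the compatibility condition: one must verify that the constraint on $\lambda$ decouples from $r$ (so that the product is genuinely $\mathbb{Z}_{\frac{m}{2d}}\rtimes(\mathbb{Z}_{2}\times U(m))$ rather than a smaller or more entangled group) and that the admissible $\lambda$ really fill out a cyclic group of order $\frac{m}{2d}$ in both regimes. The oddness of $t$, which guarantees $d\mid q$, is exactly what forces this separation of the $2$-part from the odd part.
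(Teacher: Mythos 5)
Your overall route is the same as the paper's: reduce to $Aut(G)\simeq E\rtimes B$ via Theorem \ref{abcd}, read off $B$ from Lemma \ref{l3} according to whether $2t(s+1)\equiv 0\Mod{m}$, and compute $X$ by writing $\alpha(b)=b^{i}$, $\gamma(b)=a^{\lambda}$, $\delta(a)=a^{r}$ and solving the compatibility congruence. Your treatment of the case $2t(s+1)\equiv 0\Mod{m}$ is also the paper's (it is the argument of Theorems \ref{t2} and \ref{t4}): there $(G3)$ gives $2(s-2t-1)\equiv 0\Mod{m}$, the right-hand side $(r-1)(s-2t-1)$ vanishes because $r-1$ is even, and the condition collapses to $\lambda(s+1)\equiv 0\Mod{m}$, solved exactly by $\lambda\equiv 0\Mod{2d}$; this part is fine.

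The gap is in the other case, $2t(s+1)\not\equiv 0\Mod{m}$, precisely at the step you flagged as the main obstacle and then asserted away. Writing $q=du$, $s=2^{n-2}lu-1$ with $l$ odd, and $t=2^{n-2}uq'-1$, the paper computes $s-2t-1=2^{n-2}du\frac{l-2q'}{d}$, so the compatibility condition for $\alpha(b)=b$ reads $\lambda l\equiv d(r-1)\frac{l-2q'}{d}\Mod{4d}$. Since $l$ and $\frac{l-2q'}{d}$ are odd, this congruence does \emph{not} decouple: dividing out $d$ it becomes a statement modulo $4$ which forces $r\equiv 1\Mod{4}$ when $\lambda\equiv 0\Mod{4d}$ and $r\equiv 3\Mod{4}$ when $\lambda\equiv 2d\Mod{4d}$, with the pairing reversed when $\alpha(b)=b^{3}$. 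The constraint is therefore not the vacuous statement that $r\equiv 1$ or $3\Mod{4}$; for each fixed $(\alpha,\gamma)$ only one residue class of $r$ modulo $4$ is admissible. In particular, specializing to $\alpha=1$ and $\delta=1$ yields only $\lambda\equiv 0\Mod{4d}$, so your identification $C\simeq\mathbb{Z}_{\frac{m}{2d}}$ and the claimed factorization $E\simeq C\rtimes M$ with those factors cannot be obtained by the decoupling you describe. The paper handles this regime by an explicit two-sub-case analysis ($\alpha(b)=b$ and $\alpha(b)=b^{3}$) that keeps track of the coupled pairs $(\lambda\bmod 4d,\;r\bmod 4)$; without that analysis (or some substitute for it), your argument establishes the theorem only in the first case.
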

	\begin{proof}
		Let $q=du$, for some odd integer $u$. Then using $(G2)$, we have $s\equiv -1 \Mod{2^{n-2}u}$ which implies that $s = 2^{n-2}lu-1$, where $1\le l \le 4d$. Now, using $(G1)$, $l(2^{n-3}ul-1)\equiv 0 \Mod{d}$. Using $(G3)$, we get
		\begin{equation}\label{e5}
		(t+1)(lu2^{n-3}-1)\equiv 0 \Mod{2^{n-2}q}.
		\end{equation}
		\n \textit{Case($i$):} If $l$ is even, then by the Equation (\ref{e5}), $t\equiv lu2^{n-3}-1 \Mod{2^{n-2}q}$. Note that, $2t(s+1)\equiv 2t(2^{n-2}lu)\equiv 0 \Mod{m}$ and $\lambda(s+1) = \lambda(lu2^{n-2})$. Thus $\lambda(s+1)\equiv 0 \Mod{m}$ if and only if $\lambda l \equiv 0 \Mod{4d}$, which is true for all $\lambda\equiv 0 \Mod{2d}$. Using the similar argument as in the proof of the Theorem \ref{t2}, we get $X \simeq \mathbb{Z}_{\frac{m}{2d}}\rtimes(\mathbb{Z}_{2}\times U(m))$ and $B\simeq \mathbb{Z}_{2}$. Hence, $Aut(G)\simeq E \rtimes B\simeq (\mathbb{Z}_{\frac{m}{2d}}\rtimes(\mathbb{Z}_{2}\times U(m)))\rtimes \mathbb{Z}_{2}$.
		
		\vspace{.2cm}
		
		\n \textit{Case($ii$):} If $l$ is odd, then using the Equation (\ref{e5}), one can easily observe that $\gcd(l,d) = 1$ which means that $2^{n-3}lu-1 = dl^{\prime}$, where $l^{\prime}$ is odd, $\gcd(l^{\prime},u) = 1$ and $1\le l^{\prime}\le 2^{n}u$. Thus, using the Equation (\ref{e5}), $(t+1)dl^{\prime} \equiv 0 \Mod{2^{n-2}q}$. Since, $\gcd(l^{\prime},u) = 1$, $t = 2^{n-2}uq^{\prime}-1$, where $1\le q^{\prime}\le 4d$. Now, $s-2t-1 = 2dl^{\prime}-2t = 2d(l^{\prime} - \frac{t}{d}) = 2d(\frac{2^{n-3}ul-2^{n-2}uq^{\prime}}{d}) = 2^{n-2}du\frac{l-2q^{\prime}}{d}$.
		
		\vspace{.2cm}
		
		\n Let $(\alpha,\gamma,\delta)\in X$ be such that $\alpha(b) = b^{i}$, $\gamma(b) = a^{\lambda}$ and $\delta(a) = a^{r}$, where $i\in \{1,3\}$, $0\le \lambda \le m-1$, $\lambda$ is even and $r\in U(m)$. We consider two sub-cases based on the image of the map $\alpha$.
		
		\vspace{.2cm}
		
		\n \textit{Sub-case ($i$):} Let $\alpha(b) = b$. Then using $\delta(a)^{\alpha(b)}\gamma(b) = \gamma(a\cdot b)\delta(a^{b})$, we get
		\n $a^{\lambda(s+2)+ (2t+1)r} = \gamma(a\cdot b)\delta(a^{b}) = \delta(a)^{b}\gamma(b) = (a^{r})^{b}a^{\lambda} = a^{2t+1+(r-1)s+\lambda}$ which implies that 
		\begin{equation*}
		\lambda(s+1)\equiv (r-1)(s-2t-1) \Mod{m}.
		\end{equation*}
		\n	Therefore, $\lambda(lu2^{n-2})\equiv 2^{n-2}q(r-1)(\frac{l-2q^{\prime}}{d}) \Mod{2^{n}q}$ which implies that $\lambda l\equiv d(r-1)(\frac{l-2q^{\prime}}{d}) \Mod{4d}$. Now, if $\lambda \equiv 0 \Mod{2d}$, then $r \equiv 3 \Mod{4}$. Again, if $\lambda \equiv 0 \Mod{4d}$, then $r \equiv 1 \Mod{4}$. Thus, in this sub-case, the choices for the maps $\gamma$ and $\delta$ are, $\gamma_{\lambda}(b) = a^{\lambda}$ and $\delta_{r}(a) = a^{r}$, where $\lambda$ is even and $\lambda \equiv 0 \Mod{2d}$, and $r\in U(m)$.
		
		\vspace{.2cm}
		
		\n \textit{Sub-case ($ii$):} Let $\alpha(b) = b^{3}$. Then using $\delta(a)^{\alpha(b)}\gamma(b) = \gamma(a\cdot b)\delta(a^{b})$, $a^{\lambda(s+2)+ (2t+1)r} = \gamma(a\cdot b)\delta(a^{b}) = \delta(a)^{b^{3}}\gamma(b) = (a^{r})^{b^{3}}a^{\lambda} = a^{4t+2ts+1+(r-1)s+\lambda}$ which implies that 
		\begin{equation*}
		(\lambda-2t)(s+1)\equiv (r-1)(s-2t-1) \Mod{m}.
		\end{equation*}
		\n	Therefore, $lu2^{n-2}(\lambda-2t)\equiv 2^{n-2}q(r-1)(\frac{l-2q^{\prime}}{d}) \Mod{2^{n}q}$ which implies that $l(\lambda-2t)\equiv d(r-1)(\frac{l-2q^{\prime}}{d}) \Mod{4d}$. Now, if $\lambda \equiv 0 \Mod{2d}$, then $r \equiv 1 \Mod{4}$. Again, if $\lambda \equiv 0 \Mod{4d}$, then $r \equiv 3 \Mod{4}$. Thus, in this sub-case, the choices for the maps $\gamma$ and $\delta$ are, $\gamma_{\lambda}(b) = a^{\lambda}$ and $\delta_{r}(a) = a^{r}$, where $\lambda$ is even and $\lambda \equiv 0 \Mod{2d}$, and $r\in U(m)$.
		
		\vspace{.2cm}
		
		\n Thus combining both the \textit{sub-cases} $(i)$ and $(ii)$, we get, for all $\alpha\in Aut(H)$, the choices for the maps $\gamma$ and $\delta$ are, $\gamma_{\lambda}(b) = a^{\lambda}$ and $\delta_{r}(a) = a^{r}$,  where $\lambda$ is even and $\lambda \equiv 0 \Mod{2d}$, and $r\in U(m)$. Therefore, using the Theorem \ref{abcd}, $E \simeq \mathbb{Z}_{\frac{m}{2d}}\rtimes(\mathbb{Z}_{2}\times U(m))$. Also, since, $2t(s+1)\not\equiv 0 \Mod{m}$, using the Lemma \ref{l3}, $Im(\beta) = \{1\}$. Thus, $B$ is a trivial group. Hence, using the Theorem \ref{abcd}, $Aut(G) \simeq E\rtimes B \simeq \mathbb{Z}_{\frac{m}{2d}}\rtimes(\mathbb{Z}_{2}\times U(m))$.
	\end{proof}
	
	\begin{theorem}
		Let $m=8q$, $t$ is odd, and $\gcd(t,m) = d$, where $q>1$ is odd. Then
		\begin{equation*}
		Aut(G) \simeq \left\{\begin{array}{ll}
		(\mathbb{Z}_{\frac{m}{2d}}\rtimes(\mathbb{Z}_{2}\times U(m)))\rtimes \mathbb{Z}_{2}, & \text{if}\; 2t(s+1)\equiv 0 \Mod{m}\\
		\mathbb{Z}_{\frac{m}{2d}}\rtimes(\mathbb{Z}_{2}\times U(m)), & \text{if}\; 2t(s+1)\not\equiv 0 \Mod{m}
		\end{array}	\right.
		\end{equation*}
	\end{theorem}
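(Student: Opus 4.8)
The plan is to run the argument of the preceding theorem (the $m=2^{n}q$, $t$ odd case) at the boundary value $n=3$, where $m=2^{3}q$, and to recheck the few steps that silently used $2^{n-3}\ge 2$ now that $2^{n-3}=1$. Since $t$ is odd, $d=\gcd(t,m)$ is odd and hence divides $q$; write $q=du$ with $u$ odd. First I would read off the admissible $s$: condition $(G2)$ forces $s\equiv -1\Mod{2u}$, so $s=2lu-1$ with $1\le l\le 4d$, and substituting this into $(G1)$ and $(G3)$ yields $l(ul-1)\equiv 0\Mod{d}$ and $(t+1)(ul-1)\equiv 0\Mod{2q}$. A direct computation shows $2t(s+1)=4tlu\equiv 0\Mod m$ exactly when $l$ is even (as $d\mid t$), and $\not\equiv 0\Mod m$ when $l$ is odd (since then $tl$ is odd). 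This is precisely what separates the two branches of the stated formula, so the whole proof splits on the parity of $l$.

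Suppose first that $l$ is even, so $2t(s+1)\equiv 0\Mod m$. Taking $(\alpha,\gamma,\delta)\in X$ with $\alpha(b)=b^{i}$, $\gamma(b)=a^{\lambda}$ ($\lambda$ even by Lemma \ref{l2}$(iii)$) and $\delta(a)=a^{r}$, I would evaluate the compatibility relation $\delta(a)^{\alpha(b)}\gamma(b)=\gamma(a\cdot b)\delta(a^{b})$ using Lemma \ref{l1}; it collapses to the single congruence $\lambda(s+1)\equiv(r-1)(s-2t-1)\Mod m$. Solving this for $\alpha(b)=b$ and then for $\alpha(b)=b^{3}$ should show that the admissible $\gamma$ are exactly those with $\lambda\equiv 0\Mod{2d}$, while $r$ is free in $U(m)$. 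By Theorem \ref{abcd} this identifies $E\simeq C\rtimes M\simeq \mathbb{Z}_{\frac{m}{2d}}\rtimes(\mathbb{Z}_{2}\times U(m))$. Since $2t(s+1)\equiv 0\Mod m$, Lemma \ref{l3} gives $Im(\beta)=\langle b^{2}\rangle$, so $B\simeq\mathbb{Z}_{2}$, and hence $Aut(G)\simeq E\rtimes B\simeq(\mathbb{Z}_{\frac{m}{2d}}\rtimes(\mathbb{Z}_{2}\times U(m)))\rtimes\mathbb{Z}_{2}$.

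Suppose instead that $l$ is odd. From $(t+1)(ul-1)\equiv 0\Mod{2q}$ I would first force $\gcd(l,d)=1$, which gives $ul\equiv 1\Mod d$ and lets me write $t=2uq^{\prime}-1$ with $d\mid t$; a short calculation then gives $s-2t-1=2u(l-2q^{\prime})$ with $d\mid(l-2q^{\prime})$. Feeding this into $\delta(a)^{\alpha(b)}\gamma(b)=\gamma(a\cdot b)\delta(a^{b})$ in the two sub-cases $\alpha(b)=b$ and $\alpha(b)=b^{3}$ again reduces, after cancelling the common factor $u$ and inverting the unit $l$ modulo $4d$, to congruences whose common solution set is $\lambda\equiv 0\Mod{2d}$ with $r\in U(m)$. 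Thus $E\simeq\mathbb{Z}_{\frac{m}{2d}}\rtimes(\mathbb{Z}_{2}\times U(m))$ by Theorem \ref{abcd}, while $2t(s+1)\not\equiv 0\Mod m$ forces $Im(\beta)=\{1\}$ by Lemma \ref{l3}, so $B$ is trivial and $Aut(G)\simeq E\rtimes B\simeq\mathbb{Z}_{\frac{m}{2d}}\rtimes(\mathbb{Z}_{2}\times U(m))$. Combining the two branches gives the stated dichotomy.

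The main obstacle I anticipate is the $2$-adic bookkeeping in the odd-$l$ branch: because $n=3$ makes $2^{n-3}=1$, the factors of $2$ that were comfortably interior for $n\ge 4$ now sit on the boundary. Concretely, I must confirm that the congruence coming from $(A6)$ cuts the exponents $\lambda$ down to exactly the subgroup generated by $a^{2d}$ (so that $C\simeq\mathbb{Z}_{m/2d}$) and imposes only the residue condition on $r\bmod 4$ that the two sub-cases jointly leave free, rather than an accidental further constraint that would shrink $U(m)$. I also need to verify that the odd-$l$ branch is genuinely non-empty and produces honest Zappa-Sz\'{e}p (non-semidirect) products, unlike the vacuous low-valuation branch that appeared in the even-$t$ analysis. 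Granting these checks, the remainder is routine and parallels Theorem \ref{t2}.
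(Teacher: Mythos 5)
Your proposal is correct and follows essentially the same route as the paper's own proof: parametrize $s=2lu-1$ via $(G2)$, split on the parity of $l$ (which is exactly the dichotomy $2t(s+1)\equiv 0$ or $\not\equiv 0 \Mod{m}$), handle even $l$ as in Theorem \ref{t2}, and for odd $l$ derive $\gcd(l,d)=1$, write $t$ accordingly, compute $s-2t-1$, and solve the congruence from $(A6)$ in the two sub-cases $\alpha(b)=b$ and $\alpha(b)=b^{3}$ to get $\lambda\equiv 0\Mod{2d}$, $r\in U(m)$, concluding via Theorem \ref{abcd} and Lemma \ref{l3}. The paper's proof is, in effect, precisely the $n=3$ instance of the preceding theorem's argument, which is what you propose.
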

	\begin{proof}
		Let $q=du$, for some odd integer $u$. Then using $(G2)$, $s\equiv -1 \Mod{2u}$ which implies that $s = 2lu-1$, where $1\le l \le 4d$. Now, using $(G1)$, $l(lu-1)\equiv 0 \Mod{d}$. Using $(G3)$, we get
		\begin{equation}\label{e6}
		(t+1)(lu-1)\equiv 0 \Mod{2q}.
		\end{equation}
		\n \textit{Case($i$):} If $l$ is even, then by the Equation (\ref{e6}), $t\equiv lu-1 \Mod{2q}$. Note that, $2t(s+1)\equiv 2t(2lu)\equiv 0 \Mod{m}$ and $\lambda(s+1)= \lambda(2lu)$. Thus $\lambda(s+1)\equiv 0 \Mod{m}$ if and only if $\lambda l \equiv 0 \Mod{4d}$ which is true for all $\lambda\equiv 0 \Mod{2d}$. Therefore, using the similar argument as in the proof of the Theorem \ref{t2}, we get $E\simeq \mathbb{Z}_{\frac{m}{2d}}\rtimes(\mathbb{Z}_{2}\times U(m))$ and $B\simeq \mathbb{Z}_{2}$. Hence, by the Theorem \ref{abcd}, $Aut(G) \simeq E\rtimes B\simeq (\mathbb{Z}_{\frac{m}{2d}}\rtimes(\mathbb{Z}_{2}\times U(m)))\rtimes \mathbb{Z}_{2}$.
		
		\vspace{.2cm}
		
		\n \textit{Case($ii$):} If $l$ is odd, then using the Equation (\ref{e6}), one can easily observe that $\gcd(l,d) = 1$ which means that $lu-1 = dl^{\prime}$, where $1\le l^{\prime}\le 8u$ and $\gcd(l^\prime, u) = 1$. Since $lu-1$ is even, $l^{\prime}$ is even. Thus using the Equation (\ref{e6}), $(t+1)dl^{\prime} \equiv 0 \Mod{2q}$. Since, $\gcd(l^{\prime},u) = 1$, $t = uq^{\prime}-1$, where $1\le q^{\prime}\le 8d$ and $q^{\prime}$ is even, as $t$ is odd. Now, $s-2t-1 = 2dl^{\prime}-2t = 2d(l^{\prime} - \frac{t}{d}) = 2d(\frac{ul-uq^{\prime}}{d}) = 2du\frac{l-q^{\prime}}{d}$.
		
		\vspace{.2cm}
		
		\n Let $(\alpha,\gamma,\delta)\in X$ be such that $\alpha(b) = b^{i}$, $\gamma(b) = a^{\lambda}$ and $\delta(a) = a^{r}$, where $i\in \{1,3\}$, $0\le \lambda \le m-1$, $\lambda$ is even and $r\in U(m)$. We consider two sub-cases based on the image of the map $\alpha$.
		
		\vspace{.2cm}
		
		\n \textit{Sub-case($i$):} Let $\alpha(b) = b$. Then, $a^{\lambda(s+2)+ (2t+1)r} = \gamma(a\cdot b)\delta(a^{b}) = \delta(a)^{b}\gamma(b) = (a^{r})^{b}a^{\lambda} = a^{2t+1+(r-1)s+\lambda}$ which implies that 
		\begin{equation*}
		\lambda(s+1)\equiv (r-1)(s-2t-1) \Mod{m}.
		\end{equation*}
		Therefore, $\lambda(2lu)\equiv 2du(r-1)(\frac{l-q^{\prime}}{d}) \Mod{8q}$ which implies that $\lambda(l)\equiv d(r-1)(\frac{l-q^{\prime}}{d}) \Mod{4d}$. Now, if $\lambda \equiv 0 \Mod{2d}$, then $r \equiv 3 \Mod{4}$. Again, if $\lambda \equiv 0 \Mod{4d}$, then $r \equiv 1 \Mod{4}$. Thus, in this sub-case, the choices for the maps $\gamma$ and $\delta$ are, $\gamma_{\lambda}(b) = a^{\lambda}$ and $\delta_{r}(a) = a^{r}$, where $\lambda$ is even and $\lambda \equiv 0 \Mod{2d}$, and $r\in U(m)$. 
		
		\vspace{.2cm}
		
		\n \textit{Sub-case($ii$):} Let $\alpha(b) = b^{3}$. Then, $a^{\lambda(s+2)+ (2t+1)r} = \gamma(a\cdot b)\delta(a^{b}) = \delta(a)^{b^{3}}\gamma(b)$ $= (a^{r})^{b^{3}}a^{\lambda} = a^{4t+2ts+1+(r-1)s+\lambda}$ which implies that 
		\begin{equation*}
		(\lambda-2t)(s+1)\equiv (r-1)(s-2t-1) \Mod{m}.
		\end{equation*}
		Therefore, $2lu(\lambda-2t)\equiv 2du(r-1)(\frac{l-q^{\prime}}{d}) \Mod{8q}$ which implies that $l(\lambda-2t)\equiv d(r-1)(\frac{l-q^{\prime}}{d}) \Mod{4d}$. Now, if $\lambda \equiv 0 \Mod{2d}$, then $r \equiv 1 \Mod{4}$. Again, if $\lambda \equiv 0 \Mod{4d}$, then $r \equiv 3 \Mod{4}$. Thus, in this sub-case, the choices for the maps $\gamma$ and $\delta$ are, $\gamma_{\lambda}(b) = a^{\lambda}$ and $\delta_{r}(a) = a^{r}$, where $\lambda$ is even and $\lambda \equiv 0 \Mod{2d}$, and $r\in U(m)$.
		
		\vspace{.2cm}
		
		\n Thus combining both the \textit{sub-cases} $(i)$ and $(ii)$, we get, for all $\alpha\in Aut(H)$, the choices for the maps $\gamma$ and $\delta$ are, $\gamma_{\lambda}(b) = a^{\lambda}$ and $\delta_{r}(a) = a^{r}$,  where $\lambda$ is even and $\lambda \equiv 0 \Mod{2d}$, and $r\in U(m)$. Therefore, using the Theorem \ref{abcd}, $X \simeq \mathbb{Z}_{\frac{m}{2d}}\rtimes(\mathbb{Z}_{2}\times U(m))$. Also, since, $2t(s+1)\not\equiv 0 \Mod{m}$, using the Lemma \ref{l3}, $Im(\beta) = \{1\}$. Thus, $B$ is a trivial group. Hence, by the Theorem \ref{abcd}, $Aut(G)\simeq E\rtimes B \simeq \mathbb{Z}_{\frac{m}{2d}}\rtimes(\mathbb{Z}_{2}\times U(m))$.
		
	\end{proof}
	%%%%%%%%%%%%%%%%%%%%%%%%%%%%%%%%%%%%%%%%%%%%%%%%%%%%%%%%%%%%%%%%%%%%%%%%%%%%%%%%%%%%%%%%%%%%%%%%%%%%%%%%%%%%
	%%%%%%%%%%%%%%%%%%%%%%%%%%%%%%%%%%%%%%%%%%%%%%%%%%%%%%%%%%%%%%%%%%

\section{Automorphisms of Zappa-Sz\'{e}p product of groups $\mathbb{Z}_{p^{2}}$ and $\mathbb{Z}_{m}$, $p$ is odd prime}

In \cite{ypm}, Yacoub classified the groups which are Zappa-Sz\'{e}p product of cyclic groups of order $p^{2}$ and order $m$. He found that these are of the following type (see \cite[Conclusion, p. 38]{ypm})

\begin{align*}
M_1 = & \langle a,b \;|\; a^{m} = 1 = b^{p^{2}}, ab = ba^u, u^{p^{2}}\equiv 1 \Mod{m}\rangle, \\
M_2 = & \langle a,b \;|\; a^{m} = 1 = b^{p^{2}}, ab = b^{t}a, t^{m}\equiv 1 \Mod{p^{2}}\rangle, \\
M_3 = & \langle a,b \;|\; a^{m} = 1 = b^{p^{2}}, ab = b^{t}a^{pr+1}, a^{p}b = ba^{p(pr+1)} \rangle,
\end{align*}

\n where $p$ is an odd prime and in $M_3$, $p$ divides $m$. These are not non isomorphic classes. The groups $M_1$ and $M_2$ may be isomorphic to the group $M_3$ depending on the values of $m,r$ and $t$. Clearly, $M_1$ and $M_2$ are semidirect products. Throughout this section $G$ will denote the group $M_3$ and we will be only concerned about groups $M_3$ which are Zappa-Sz\'{e}p product but not the semidirect product. Note that $G=H \bowtie K$, where $H=\langle b \rangle$ and $K=\langle a \rangle$. For the group $G$, the mutual actions of $H$ and $K$ are defined by $a\cdot b = b^{t}, a^{b} = a^{pr+1}$ along with $a^{p}\cdot b = b$ and $(a^{p})^{b} = a^{p(pr+1)}$, where  $t$ and $r$ are integers satisfying the conditions
\begin{itemize}
	\item[$(G1)$] $\gcd(t-1, p^{2}) = p$, that is, $t = 1+\lambda p$, where $\gcd(\lambda, p) = 1$,
	\item[$(G2)$] $\gcd(r,p) = 1$,
	\item[$(G3)$] $p(pr+1)^{p}\equiv p \Mod{m}$.
\end{itemize}

\begin{lemma}\label{s4l1}
	$a^{(pr+1)^{ip\lambda}} = a^{i((pr+1)^{p\lambda} - 1)+1}$, for all $i$.
\end{lemma}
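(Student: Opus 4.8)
The plan is to peel the stated equality of powers of $a$ down to a single divisibility and then derive that divisibility from $(G3)$. Since $a$ has order $m$, two powers $a^{N}$ and $a^{N'}$ agree iff $N\equiv N'\pmod m$; writing $c=pr+1$, the left-hand exponent is $(pr+1)^{ip\lambda}=(c^{p\lambda})^{i}$, so the lemma is equivalent to the congruence $(c^{p\lambda})^{i}\equiv i\bigl(c^{p\lambda}-1\bigr)+1\pmod m$ for every $i$. Put $x=c^{p\lambda}$ and expand $x^{i}=(1+(x-1))^{i}$ by the binomial theorem: every term past the linear one carries the factor $(x-1)^{2}$, so the congruence holds for all $i$ as soon as $(x-1)^{2}\equiv 0\pmod m$ (and the case $i=2$ shows this is also necessary). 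Thus the entire lemma reduces to proving $m\mid\bigl(c^{p\lambda}-1\bigr)^{2}$.

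To set this up I would extract two facts from the hypotheses. Condition $(G3)$ says $p\bigl(c^{p}-1\bigr)\equiv 0\pmod m$, i.e. $m\mid p\,d$ with $d:=c^{p}-1$; and since $c=pr+1\equiv 1\pmod p$ we also have $p\mid d$. Then I factor $c^{p\lambda}-1=(c^{p})^{\lambda}-1=d\,S$, where $S=\sum_{k=0}^{\lambda-1}c^{pk}$, and note that $S\equiv\lambda\pmod p$, so $p\nmid S$ because $\gcd(\lambda,p)=1$ by $(G1)$. Hence $\bigl(c^{p\lambda}-1\bigr)^{2}=d^{2}S^{2}$ with $S$ coprime to $p$.

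The divisibility $m\mid d^{2}S^{2}$ is then checked one prime at a time, using the $q$-adic valuations $v_{q}$. For a prime $q\ne p$ with $q^{f}\,\|\,m$, the relation $m\mid p\,d$ forces $v_{q}(d)\ge f$, so $v_{q}(d^{2}S^{2})\ge 2f\ge f$. For $q=p$ with $p^{e}\,\|\,m$, the same relation gives $v_{p}(d)\ge e-1$, while $p\mid d$ gives $v_{p}(d)\ge 1$; combining, $v_{p}(d)\ge\max(1,e-1)$, and since $p\nmid S$ we get $v_{p}(d^{2}S^{2})=2v_{p}(d)\ge 2\max(1,e-1)\ge e$. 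Therefore $m\mid\bigl(c^{p\lambda}-1\bigr)^{2}$, and by the reduction of the first paragraph the lemma follows.

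The one step I expect to be delicate is the prime $p$ in the case $p^{1}\,\|\,m$: here squaring the relation $m\mid p\,d$ alone does not deliver $p\mid d^{2}S^{2}$, and one genuinely needs the auxiliary input $p\mid c^{p}-1$ (from $c\equiv 1\pmod p$) together with $\gcd(\lambda,p)=1$ to see that $S$ stays invertible modulo $p$. The rest is routine binomial expansion and valuation bookkeeping.
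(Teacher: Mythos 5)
Your proof is correct and complete, and it supplies substantially more than the paper does: the paper's entire proof of this lemma is the sentence ``One can easily prove the result using $(G3)$,'' so your write-up fills in exactly the argument the authors left implicit. Your reduction is the natural one: since $a$ has order $m$, the claim is the congruence $(c^{p\lambda})^{i}\equiv i(c^{p\lambda}-1)+1 \pmod{m}$ with $c=pr+1$, and the binomial expansion of $(1+(c^{p\lambda}-1))^{i}$ reduces everything to $m\mid (c^{p\lambda}-1)^{2}$, which is where $(G3)$ enters. One remark: your valuation bookkeeping, and with it the appeal to $(G1)$, can be short-circuited. Writing $d=c^{p}-1$ and $c^{p\lambda}-1=dS$ as you do, you already have $m\mid pd$ from $(G3)$ and $p\mid d$ from $c\equiv 1\pmod{p}$; hence $d^{2}=(pd)\cdot(d/p)$ is an integer multiple of $pd$ and therefore of $m$, so $m\mid d^{2}\mid d^{2}S^{2}$ with no prime-by-prime analysis and no need for $\gcd(\lambda,p)=1$ (in fact $(1+pr)^{p}\equiv 1\pmod{p^{2}}$, so even $p^{2}\mid d$). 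The delicate case you flagged at the prime $p$ when $p$ divides $m$ exactly once is thus handled automatically by $p\mid d$, and the coprimality of $S$ to $p$ is never needed, since you only require a lower bound on the $p$-adic valuation of $d^{2}S^{2}$, not an equality.
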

\begin{proof}
	One can easily prove the result using $(G3)$.
\end{proof}
\begin{lemma}\label{s4l2}
\begin{itemize}
	\item[$(i)$] $a\cdot b^{j} = b^{jt}$, for all $j$,
	\item[$(ii)$] $a^{l}\cdot b = b^{1+lp\lambda}$, for all $l$,
	\item[$(iii)$] $a^{(b^{j})} = a^{(pr+1)^{j}}$, for all $j$,
	\item[$(iv)$] $(a^{l})^{b} = a^{\frac{l(l-1)}{2}((pr+1)^{\lambda p}-1)+l(pr+1)}$, for all $l$,
	\item[$(v)$] $a^{l}\cdot b^{j} = b^{jt^{l}}$, for all $j$ and $l$,
    \item[$(vi)$] $(a^{l})^{b^{j}} = a^{\frac{jl(l-1)}{2}((pr+1)^{\lambda p}-1)+l(pr+1)^{j}}$, for all $j$ and $l$.
\end{itemize}
\end{lemma}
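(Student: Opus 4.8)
The plan is to argue by induction in each part, reducing every formula to the four generator-level data $a\cdot b=b^{t}$, $a^{b}=a^{pr+1}$, $a^{p}\cdot b=b$, $(a^{p})^{b}=a^{p(pr+1)}$ by repeated use of the matched-pair axioms $(C3)$--$(C6)$, and then collapsing the resulting integer exponents by means of $(G1)$--$(G3)$ and Lemma \ref{s4l1}. Before anything else I would record one preliminary fact: every power of $a^{p}$ acts trivially on $H$, i.e. $a^{ps}\cdot b^{j}=b^{j}$. This comes from a short double induction — first $a^{ps}\cdot b=b$ via $(C3)$ and $a^{p}\cdot b=b$, then $a^{ps}\cdot b^{j}=b^{j}$ via $(C5)$, using that $(a^{ps})^{b}=a^{ps(pr+1)}$ again lies in $\langle a^{p}\rangle$. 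The same remark shows $b$ acts on $\langle a^{p}\rangle$ as the endomorphism $a^{ps}\mapsto a^{ps(pr+1)}$, since $(C4)$ collapses to multiplicativity once the left action is trivial.

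With this in hand the three ``dot'' identities are quick. For $(i)$ I would induct on $j$: $(C5)$ gives $a\cdot b^{j}=(a\cdot b)(a^{b}\cdot b^{j-1})=b^{t}\,(a^{pr+1}\cdot b^{j-1})$, and since $a^{pr}$ acts trivially, $(C3)$ turns $a^{pr+1}\cdot b^{j-1}$ into $a\cdot b^{j-1}=b^{(j-1)t}$, so $a\cdot b^{j}=b^{jt}$. Part $(v)$ then follows by induction on $l$ through $(C3)$, since $a^{l}\cdot b^{j}=a\cdot(a^{l-1}\cdot b^{j})=a\cdot b^{jt^{l-1}}=b^{jt^{l}}$, and $(ii)$ is the case $j=1$ of $(v)$, the exponent simplifying via $t^{l}=(1+\lambda p)^{l}\equiv 1+lp\lambda\pmod{p^{2}}$.

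The ``power'' identities $(iii),(iv),(vi)$ look mutually entangled, but the entanglement can be broken. Writing $Q=(pr+1)^{p\lambda}-1$, I would prove $(iii)$ directly by induction on $j$: the inductive step needs $(a^{L})^{b}$ only for $L=(pr+1)^{j-1}\equiv 1\pmod p$, and for such $L=1+pM$ the factorization $a^{L}=a\,(a^{p})^{M}$ together with $(C4)$ and the preliminary (the left action of $(a^{p})^{M}$ on $b$ is trivial) gives $(a^{L})^{b}=a^{b}\,((a^{p})^{M})^{b}=a^{(pr+1)(1+pM)}=a^{(pr+1)^{j}}$, with no appeal to $(iv)$. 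Once $(iii)$ holds for all $j$, I would obtain $(iv)$ by induction on $l$ from $(C4)$ in the form $(a^{l})^{b}=a^{(a^{l-1}\cdot b)}(a^{l-1})^{b}=a^{b^{t^{l-1}}}(a^{l-1})^{b}$, rewriting $a^{b^{t^{l-1}}}=a^{(pr+1)^{1+(l-1)p\lambda}}$ via $(iii)$ (reducing the $b$-exponent modulo $p^{2}$) and linearizing $(pr+1)^{(l-1)p\lambda}\equiv 1+(l-1)Q\pmod m$ by Lemma \ref{s4l1}. Finally $(vi)$ follows by induction on $j$ from $(C6)$, feeding $(iv)$ into the exponent produced at the previous stage.

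The \textbf{main obstacle}, and the real content of the lemma, is the exponent bookkeeping in $(iv)$ and $(vi)$: the products of two ``off-diagonal'' linear exponents generate exactly the binomial coefficient $\tfrac{l(l-1)}{2}$ (respectively $\tfrac{jl(l-1)}{2}$), while every remaining cross term is a multiple of $pr\,Q$ or of $Q^{2}$, and all of these must be shown to vanish modulo $m$. This is where $(G3)$ is indispensable: from $p(pr+1)^{p}\equiv p\pmod m$ one gets $p\bigl((pr+1)^{p}-1\bigr)\equiv 0\pmod m$, hence $pQ\equiv 0\pmod m$; moreover $(pr+1)\equiv 1\pmod p$ forces $p\mid Q$, so $Q^{2}=\tilde{Q}\,(pQ)\equiv 0\pmod m$ as well (writing $Q=p\tilde{Q}$). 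These two divisibilities annihilate every spurious term — the oddness of $p$ ensuring that the relevant halves of products of consecutive integers retain the saving factor $p$ — and this arithmetic cancellation is precisely what lets the inductions for $(iv)$ and $(vi)$ close.
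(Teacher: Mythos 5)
Your proof is correct and follows essentially the same route as the paper's: inductions driven by $(C3)$--$(C6)$, with $(iii)$ established before and independently of $(iv)$ by splitting a power $a^{L}$ with $L\equiv 1 \pmod{p}$ into $a$ times a power of $a^{p}$ (whose left action is trivial and on which $b$ acts multiplicatively), and the exponent arithmetic collapsed via Lemma \ref{s4l1} and the consequence $p\bigl((pr+1)^{p\lambda}-1\bigr)\equiv 0 \pmod{m}$ of $(G3)$. Your writeup is in fact more explicit than the paper's, which dispatches $(v)$ and $(vi)$ with ``follows inductively''; your verification that the cross terms involving $prQ$ and $Q^{2}$ vanish modulo $m$ is exactly the bookkeeping the paper leaves implicit.
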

\begin{proof}
	\begin{itemize}
		\item[$(i)$] Using $(C3)$ and $(C5)$, $a\cdot b^{2} = (a\cdot b)(a^{b}\cdot b) = b^{t}(a^{pr+1}\cdot b) = b^{t}(a\cdot (a^{pr}\cdot b)) = b^{t}(a\cdot b) = b^{2t}$. Similarly, $a\cdot b^{3} = (a\cdot b)(a^{b}\cdot b^{2}) = b^{t}(a^{pr+1}\cdot b^{2}) = b^{t}(a\cdot (a^{pr}\cdot b^{2})) = b^{t}(a\cdot b^{2}) = b^{3t}$. Inductively, we get $a\cdot b^{j} = b^{jt}$, for all $j$.
		\item[$(ii)$] Using $(C3)$ and part $(i)$, $a^{2}\cdot b = a\cdot (a\cdot b) = a\cdot b^{t} = b^{t^{2}} = b^{1+2p\lambda}$. Similarly, $a^{3}\cdot b = a\cdot (a^{2}\cdot b) = a\cdot b^{t^{2}} = b^{t^{3}} = b^{1+3p\lambda}$. Inductively, we get, $a^{l}\cdot b = b^{1+lp\lambda}$, for all $l$.
		\item[$(iii)$]  First, note that, using $(C4)$, we have $(a^{lp})^{b} = a^{lp(pr+1)}$. Now, using $(C4)$ and $(C6)$, $a^{(b^{2})} = (a^{b})^{b} = (a^{pr+1})^{b} = a^{(a^{pr}\cdot b)}(a^{pr})^{b} = a^{b}a^{pr(pr+1)} = a^{(pr+1)^{2}}$. Similarly, $a^{(b^{3})} = (a^{b})^{b^{2}} = (a^{pr+1})^{b^{2}} = a^{(a^{pr}\cdot b^{2})}(a^{pr})^{b^{2}} = a^{b^{2}}$ $((a^{pr})^{b})^{b} = a^{(pr+1)^{2}}(a^{pr(pr+1)})^{b} = a^{(pr+1)^{2}}a^{pr(pr+1)^{2}} = a^{(pr+1)^{3}}$. Inductively, we get, $a^{(b^{j})} = a^{(pr+1)^{j}}$, for all $j$. 
		\item[$(iv)$] Using $(C4)$, $(G3)$ and the part $(iii)$, $(a^{2})^{b} = a^{(a\cdot b)}a^{b} = a^{(b^{t})}a^{pr+1} = a^{(pr+1)^{(1+\lambda p)}}$ $a^{pr+1} = a^{(pr+1)^{\lambda p}+ pr(pr+1)^{\lambda p} + pr+1} = a^{((pr+1)^{\lambda p} -1)+ 2(pr+1)}$. By the similar argument, we get,
		\begin{align*}
		(a^{3})^{b} =& (a^{2})^{(a\cdot b)}a^{b}\\
		=& (a^{2})^{b^{t}}a^{pr+1}\\
		=& a^{(a\cdot b^{t})}a^{(b^{t})}a^{pr+1}\\
		=& a^{(b^{1+2p\lambda})}a^{(b^{1+\lambda p})}a^{pr+1}\\
		=& a^{(pr+1)^{1+2p\lambda}+(pr+1)^{1+p\lambda}+ pr+1}\\
		=& a^{(pr+1)^{2p\lambda}+pr(pr+1)^{2p\lambda}+ (pr+1)^{p\lambda} + pr(pr+1)^{p\lambda}+ pr+1}\\
		=& a^{2((pr+1)^{p\lambda}-1) + 1 + pr + (pr+1)^{p\lambda} + pr + pr+1},\;\text{(using the Lemma \ref{s4l1})}\\
		=& a^{3((pr+1)^{p\lambda} -1)+ 3(pr+1)}.
		\end{align*}
	\n	Inductively, we get, $(iv)$.
		\item[$(v)$] Follows inductively, using the parts $(i)$ and $(ii)$.
		\item[$(vi)$] Follows inductively, using the parts $(iii)$ and $(iv)$.
	\end{itemize}
\end{proof}

\begin{lemma}\label{s4l3}
	If for all $l\ne 0$, $(pr+1)^{pl}\not\equiv 1 \Mod{m}$, then 
	\begin{itemize}
		\item[$(i)$] $Im(\gamma)\subseteq \langle a^{p}\rangle$,
		\item[$(ii)$] $\alpha\in Aut(H)$,
		%\item[$(iii)$] $\beta\gamma = 0$, where $0$ denotes the trivial homomorphism in $Hom(H,H)$,   
	\end{itemize} 
\end{lemma}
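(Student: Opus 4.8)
The plan is to follow the template of Lemma~\ref{l2}$(iii)$--$(iv)$. Since $H=\langle b\rangle$, I would write $\alpha(b)=b^{i}$ and $\gamma(b)=a^{\mu}$; every value of $\alpha$ and $\gamma$ is then determined through $(A1)$, $(A2)$ and Lemma~\ref{s4l2}. By Lemma~\ref{s4l2}$(ii)$ the element $a^{l}$ stabilises $H$ pointwise iff $p\mid l$, so $Stab_{K}(H)=\langle a^{p}\rangle$ and assertion $(i)$ is precisely $p\mid\mu$. I would extract $p\mid\mu$ from the single relation $\gamma(b^{p^{2}})=\gamma(1)=1$, read modulo the $p$-part of $m$.

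First I would set up the two recursions. Using $(A1)$ with $h=b$ and Lemma~\ref{s4l2}$(v)$ gives $\alpha(b^{j})=b^{c_{j}}$ with $c_{j}=i\sum_{k=0}^{j-1}t^{k\mu}$, and since $t=1+\lambda p$ this yields $c_{j}\equiv ij+i\binom{j}{2}\mu\lambda p \Mod{p^{2}}$. Using $(A2)$ in the form $\gamma(b^{j})=\gamma(b)^{\alpha(b^{j-1})}\gamma(b^{j-1})$ together with Lemma~\ref{s4l2}$(vi)$ and telescoping, I obtain $\gamma(b^{p^{2}})=a^{d}$ with
\[
d=\frac{\mu(\mu-1)}{2}\big((pr+1)^{\lambda p}-1\big)\sum_{j=0}^{p^{2}-1}c_{j}+\mu\sum_{j=0}^{p^{2}-1}(pr+1)^{c_{j}},
\]
and the required identity is $d\equiv 0 \Mod{m}$.

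The decisive point is a valuation count. Because $b^{p^{2}}=1$ forces $(pr+1)^{p^{2}}\equiv 1 \Mod{m}$, while $(G3)$ gives $(pr+1)^{p}\equiv 1 \Mod{m/p}$ and the hypothesis gives $(pr+1)^{p}\not\equiv 1 \Mod{m}$, the only obstruction to $(pr+1)^{p}\equiv 1$ sits at the prime $p$; with the lifting-the-exponent values $v_{p}((pr+1)^{p^{2}}-1)=3$ and $v_{p}((pr+1)^{p}-1)=2$ (valid as $p$ is odd and $\gcd(r,p)=1$ by $(G2)$), this pins down $v_{p}(m)=3$. Working modulo $p^{3}$, the first summand of $d$ vanishes, since $v_{p}((pr+1)^{\lambda p}-1)=2$ together with $v_{p}(\sum c_{j})\ge 2$ raises its valuation beyond $3$; and for the second summand, when $p\nmid i$ the exponents $c_{j}$ run over all residues mod $p^{2}$, so $\sum_{j}(pr+1)^{c_{j}}=\frac{(pr+1)^{p^{2}}-1}{pr}$ has $v_{p}=3-1=2$, the cases $p\mid i$ giving the same valuation after the analogous geometric-sum computation. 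Writing $\Sigma=\sum_{j}(pr+1)^{c_{j}}$, we thus have $d\equiv\mu\,\Sigma \Mod{p^{3}}$ with $v_{p}(\Sigma)=2$, so $d\equiv 0$ forces $v_{p}(\mu)\ge 1$, i.e. $\gamma(b)\in\langle a^{p}\rangle$. Since $\langle a^{p}\rangle$ is invariant under the action $k\mapsto k^{b}$ (as $(a^{p})^{b}=a^{p(pr+1)}$), relation $(A2)$ propagates this to every $\gamma(b^{j})$, proving $Im(\gamma)\subseteq\langle a^{p}\rangle$.

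For $(ii)$, once $Im(\gamma)\subseteq\langle a^{p}\rangle=Stab_{K}(H)$, each $\gamma(h)$ acts trivially on $H$, so $(A1)$ collapses to $\alpha(hh')=\alpha(h)\alpha(h')$ and $\alpha$ is an endomorphism of $H$; by $(A7)$ it is bijective, whence $\alpha\in Aut(H)$, exactly as in Lemma~\ref{l2}$(iv)$. The main obstacle is the third step: identifying $v_{p}(m)=3$ from $(G3)$ and the hypothesis, and carrying out the $p$-adic valuation of the geometric-type sum $\Sigma$ uniformly in $i$. The delicate bookkeeping is caused by the quadratic ``carry'' terms $\binom{j}{2}\big((pr+1)^{\lambda p}-1\big)$ coming from the non-homomorphic action in Lemma~\ref{s4l2}$(vi)$; the role of the hypothesis is precisely to keep these carries and the main sum at $p$-valuation $2$, one less than $v_{p}(m)$.
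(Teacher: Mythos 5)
Your proposal is correct and follows essentially the same route as the paper: write $\alpha(b)=b^{i}$, $\gamma(b)=a^{\mu}$, derive the recursions for $\alpha(b^{u})$ and $\gamma(b^{u})$ from $(A1)$, $(A2)$ and Lemma \ref{s4l2}, extract the key congruence from $\gamma(b^{p^{2}})=\gamma(1)=1$, conclude $p\mid\mu$, and then get $(ii)$ from $(i)$ via the collapse of $(A1)$ and $(A7)$. The only difference is that where the paper simply asserts that the congruence together with $(G3)$ and the hypothesis forces $\mu\equiv 0 \Mod{p}$, you supply the actual justification (pinning down $v_{p}(m)=3$ by lifting the exponent and computing $v_{p}$ of the geometric sum to be $2$), so your write-up fills in the number-theoretic details the paper leaves implicit.
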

\begin{proof}
	\begin{itemize}
		\item[$(i)$] Let $\alpha(b) = b^{i}$ and $\gamma(b) = a^{\mu}$. Then using $(A1)$ and the Lemma \ref{s4l2} $(v)$, $\alpha(b^{2}) = \alpha(b)(\gamma(b)\cdot \alpha(b)) = b^{i}(a^{\mu} \cdot b^{i}) = b^{i(1+t^{\mu})}$. Inductively, we get,
		\begin{align*}
		\alpha(b^{u}) &= b^{i(1+ t^{\mu} + t^{2\mu} + \cdots + t^{(u-1)\mu})}\\
		&= b^{i(1+ (1+p\mu\lambda) + (1+ 2p\mu\lambda) + \cdots + (1+ (u-1)p\mu\lambda))}\\
		&= b^{i(u+ \frac{u(u-1)}{2}p\mu\lambda)}
		\end{align*}  
		\n for all $0\le u \le p^{2}-1$. Now, using $(A2)$ and the Lemma \ref{s4l2} $(vi)$, $\gamma(b^{2}) = \gamma(b)^{\alpha(b)}\gamma(b) = (a^{\mu})^{b^{i}}a^{\mu} = a^{\frac{i\mu(\mu-1)}{2}((pr+1)^{p\lambda} - 1)+ \mu(pr+1)^{i}+ \mu}$. Inductively, we get, 
		\begin{align*}
		\gamma(b^{u}) = a^{(i\frac{u(u-1)\mu(\mu-1)}{2}+ i\mu^{2}\frac{u(u-1)(u-2)}{6})((pr+1)^{p\lambda} - 1)+\mu \sum_{\nu=0}^{u-1}{(pr+1)^{i\nu}}}
		\end{align*}
		\n for all $0\le u \le p^{2}-1$. Now, using $(G3)$, $1 = \gamma(p^{2}) = a^{\mu\sum_{\nu=0}^{p^{2}-1}{(pr+1)^{i\nu}}} = a^{\mu\left(\frac{(pr+1)^{ip^{2}}-1}{(pr+1)^{i}-1}\right)}$ which implies that
		\begin{equation}\label{s4e1}
		\mu\left(\frac{(pr+1)^{ip^{2}}-1}{(pr+1)^{i}-1}\right)\equiv 0 \Mod{m}.
		\end{equation}
		\n If for all $l\ne 0$, $(pr+1)^{pl}\equiv 1 \Mod{m}$, then by the Equation (\ref{s4e1}), $\mu$ can be anything. On the other hand, if for all $l\ne 0$, $(pr+1)^{pl}\not\equiv 1 \Mod{m}$, then by the Equation (\ref{s4e1}) and $(G3)$, $\mu \equiv 0 \Mod{p}$. Also, note that, in both the cases, namely  $(pr+1)^{pl}\equiv 1 \Mod{m}$ and $(pr+1)^{pl}\not\equiv 1 \Mod{m}$, we have that $\gamma(b^{u}) = a^{\mu \sum_{\nu=0}^{u-1}{(pr+1)^{i\nu}}}$. Hence, if $(pr+1)^{pl}\not\equiv 1 \Mod{m}$, then  $\gamma(b^{u}) = a^{\mu \sum_{\nu=0}^{u-1}{(pr+1)^{i\nu}}}\in \langle a^{p}\rangle$.  
		\item[$(ii)$] Follows immediately using the part $(i)$.
	\end{itemize}
\end{proof}

\begin{lemma}\label{s4l4}
	Let $\begin{pmatrix}
		\alpha & \beta\\
		\gamma & \delta
	\end{pmatrix} \in \mathcal{A}$. Then, if $\beta\in Q$, then 
	\begin{itemize}
		\item[$(i)$] $\beta\in Hom(K,H)$ and $Im(\beta)\le \langle b^{p}\rangle$,
		\item[$(ii)$] $l(pr+1)^{j}\equiv l \Mod{m}$, for all $l$,
		\item[$(iii)$] $\gamma(h)\cdot \beta(k) = \beta(k)$ and $\gamma(h)^{\beta(k)} = \gamma(h)$, for all $h\in H$ and $k\in K$,
	%	\item[$(iv)$]  and $Aut(G)\simeq \mathcal{A}.$
		\item[$(iv)$] $\gamma\beta = 0$, where $0$ is the trivial homomorphism in $Hom(K,K)$,
		\item[$(v)$] $\gamma\beta + \delta \in Aut(K)$ and $\gamma\beta + \delta\in S$, 
		\item[$(vi)$] $\beta\gamma \in Hom(H,H)$,
		\item[$(vii)$]  $\alpha+\beta\gamma \in Aut(H)$ and $\alpha+\beta\gamma \in P$.
%		\item[$(viii)$] $\alpha\beta\gamma \in P$,
%		\item[$(ix)$] $\alpha + \beta\gamma \in P$.
	\end{itemize}
	
\end{lemma}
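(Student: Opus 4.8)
The plan is to prove the seven items essentially in the listed order, since each later item relies on the earlier ones, working throughout with the explicit action formulas of Lemma \ref{s4l2} and the numerical constraints $(G1)$--$(G3)$.

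\textbf{Items (i)--(iii).} First I would write $\beta(a)=b^{j}$. The stabiliser clause of $Q$, namely $a=a^{\beta(a)}$, together with Lemma \ref{s4l2}$(iii)$ gives $a=a^{(pr+1)^{j}}$, i.e. $(pr+1)^{j}\equiv 1\Mod{m}$. A short case analysis on $j$ modulo $p$, modelled on the proof of Lemma \ref{l3}, shows that $p\nmid j$ forces the action $a^{b}=a^{pr+1}$ to degenerate, so that $G$ becomes a semidirect product, contrary to our standing hypothesis; hence $p\mid j$ and $Im(\beta)\le\langle b^{p}\rangle$. Because $t=1+\lambda p$, one has $a^{e}\cdot b^{pc}=b^{pc\,t^{e}}=b^{pc}$ for every $e,c$, so the cocycle term $\delta(k)\cdot\beta(k')$ in the defining relation of $Q$ collapses to $\beta(k')$ and $\beta$ becomes a genuine homomorphism; this is $(i)$. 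For $(ii)$ I would substitute $k=a^{l}$ into $a^{l}=(a^{l})^{\beta(a)}$ and expand the right-hand side by Lemma \ref{s4l2}$(vi)$; the quadratic coefficient carries the factor $j\equiv 0\Mod{p}$, and since $(G3)$ yields $p\big((pr+1)^{\lambda p}-1\big)\equiv 0\Mod{m}$ this term vanishes, leaving $l(pr+1)^{j}\equiv l\Mod{m}$. Item $(iii)$ is then immediate: the first identity is again $a^{e}\cdot b^{pc}=b^{pc}$, and the second follows from $(ii)$ and $(C6)$ after writing $\beta(k)$ as a power of $\beta(a)$.

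\textbf{Item (vi).} I would expand $\beta\gamma(hh')=\beta\big(\gamma(h)^{\alpha(h')}\gamma(h')\big)$ using $(A2)$; since $\beta$ is now a homomorphism this splits as $\beta\big(\gamma(h)^{\alpha(h')}\big)\,\beta(\gamma(h'))$, and the last clause of $Q$, namely $\beta(k^{h})=\beta(k)$, removes the $\alpha(h')$-twist, giving $\beta\gamma(hh')=\beta\gamma(h)\,\beta\gamma(h')$. Thus $\beta\gamma\in Hom(H,H)$, with values in the abelian group $H$.

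\textbf{Items (iv), (v), (vii) --- the crux.} The decisive step is $(iv)$: $\gamma$ must annihilate $Im(\beta)=\langle b^{p}\rangle$. Here I would use the explicit form of $\gamma$ on powers of $b$ obtained in the proof of Lemma \ref{s4l3}, namely $\gamma(b^{u})=a^{\mu\sum_{\nu=0}^{u-1}(pr+1)^{i\nu}}$ with $\gamma(b)=a^{\mu}$ and $\alpha(b)=b^{i}$, and evaluate it at $b^{pc}$; the geometric sum should collapse modulo $m$ because $(pr+1)^{pc_{0}}\equiv 1\Mod{m}$ (item $(ii)$ at $l=1$) forces $w=(pr+1)^{i}$ to have small order, while $(G3)$ controls the residual multiple of $m/p$. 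I expect this to be the main obstacle, because the collapse genuinely requires $Im(\gamma)\subseteq\langle a^{p}\rangle$, which does \emph{not} follow from $(A1)$--$(A2)$ alone in the $\mathbb{Z}_{p^{2}}$ setting (the quadratic term that killed the analogue in Lemma \ref{l2} no longer does so for odd $p$). I would therefore try to extract it from the joint conditions $(A5)$ and $(A6)$: reducing $(A6)$ by $(iii)$ to $\delta(k)^{\alpha(h)}\gamma(h)=\gamma(k\cdot h)\delta(k^{h})$ and evaluating on $k=a,\ h=b$ should pin down $\mu\Mod{p}$ in terms of the $\alpha,\delta$ data, and reconciling this with $(A5)$ and the non-semidirectness of $G$ is what would let me conclude $\gamma(\langle b^{p}\rangle)=\{1\}$. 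Once $(iv)$ is secured, $(v)$ is nearly automatic: $\gamma\beta=0$ makes $\gamma\beta+\delta=\delta$, a bijection by $(A7)$ satisfying $\delta(k)^{h}=\delta(k^{h})$, hence lying in $Aut(K)\cap S$. Finally for $(vii)$, the map $\alpha+\beta\gamma$, whose value $\alpha(h)\,\beta\gamma(h)$ is a product of homomorphism-values in the abelian group $H$ (using $\gamma(h)\cdot\alpha(h')=\alpha(h')$, valid once $Im(\gamma)\subseteq\langle a^{p}\rangle$), is itself a homomorphism; it is bijective by $(A7)$ and a routine verification places it in $P$.
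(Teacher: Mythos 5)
Your handling of items (i), (ii), (iii) and (vi) is sound and is essentially the paper's own argument: (ii), (iii) and (vi) match it step for step, and in (i) you differ only in how $p\mid j$ is obtained. You use the stabiliser clause $a=a^{\beta(a)}$ together with an order analysis of $pr+1$ modulo $m$ (this does work, since $(G3)$ forces that order to be $1$ or a power of $p$, and order $1$ means $a^{b}=a$, i.e.\ the semidirect case), whereas the paper gets it in one line from the clause $\beta(k^{h})=\beta(k)$: $b^{j}=\beta(a)=\beta(a^{b})=\beta(a^{pr+1})=b^{j(pr+1)}$, hence $jpr\equiv 0\Mod{p^{2}}$ and, as $\gcd(r,p)=1$, $p\mid j$.

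The genuine gap is exactly where you locate it: for (iv) — and therefore for (v) and (vii), which you and the paper both derive from it — you give only a strategy ("reconciling this with $(A5)$ \dots is what would let me conclude"), not a proof. Unfortunately that strategy cannot be completed, and your distrust of the geometric-sum collapse is precisely correct: the paper's own proof of (iv) commits the error you fear. From part (ii) one only gets $m\mid (pr+1)^{ijl}-1$; the paper then asserts $a^{\mu\left(\frac{(pr+1)^{ijl}-1}{(pr+1)^{i}-1}\right)}=1$, but divisibility of the numerator by $m$ is not preserved after dividing by $(pr+1)^{i}-1$. In fact the conclusion itself fails under the stated hypotheses. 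Take $p=3$, $m=9$, $r=1$, $t=4$, so $G=\langle a,b\mid a^{9}=b^{9}=1,\ ab=b^{4}a^{4},\ a^{3}b=ba^{3}\rangle$; conditions $(G1)$--$(G3)$ hold, $a\cdot b=b^{4}$, $a^{b}=a^{4}$, and neither $H=\langle b\rangle$ nor $K=\langle a\rangle$ is normal. Define $\theta(a)=b^{3}a$, $\theta(b)=b^{2}a$. Using $4^{3}\equiv 1\Mod{9}$ and the centrality of $a^{3},b^{3}$ one checks that $\theta$ preserves all defining relations (for instance $\theta(a)\theta(b)=b^{2}a^{8}=\theta(b)^{4}\theta(a)^{4}$ and $\theta(a)^{3}\theta(b)=b^{2}a^{4}=\theta(b)\theta(a)^{3}$), and $b=\theta(a)\theta(b)^{-1}$ lies in the image, so $\theta\in Aut(G)$. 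Its matrix has $\beta(a^{l})=b^{3l}$, $\delta(a^{l})=a^{l}$, $\alpha(b)=b^{2}$, $\gamma(b)=a$, and all three clauses defining $Q$ hold for this $\beta$, so the hypotheses of the lemma are satisfied.

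Nevertheless $\theta(b^{2})=ba^{8}$ and $\theta(b^{3})=b^{6}a^{3}$, so $\gamma\beta(a)=\gamma(b^{3})=a^{3}\neq 1$, contradicting (iv); and $(\alpha+\beta\gamma)(b)=b^{2}\beta(a)=b^{5}$ while $(\alpha+\beta\gamma)(b^{2})=\alpha(b^{2})\beta(a^{8})=b\cdot b^{6}=b^{7}\neq (b^{5})^{2}$, so $\alpha+\beta\gamma$ is not even an endomorphism of $H$, contradicting (vii). (Here (v) happens to survive, since $\gamma\beta+\delta$ is $a\mapsto a^{4}$, but both your proof of (v) and the paper's rest on (iv) and so collapse with it.) Consequently no argument using only $(A1)$--$(A7)$ and $\beta\in Q$ — neither your proposed $(A5)$/$(A6)$ route nor the paper's computation — can establish (iv); the lemma needs a further hypothesis, e.g.\ $(pr+1)^{p}\not\equiv 1\Mod{m}$, under which Lemma \ref{s4l3} at least forces $Im(\gamma)\subseteq\langle a^{p}\rangle$, or a reading of ``not a semidirect product'' strong enough to exclude groups such as the one above, which is abstractly isomorphic to $\mathbb{Z}_{9}\rtimes\mathbb{Z}_{9}$ although the factorisation $H\bowtie K$ is not semidirect. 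So your write-up is incomplete at its self-declared crux, but your diagnosis of where the difficulty lies is more accurate than the paper's own treatment of it.
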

\begin{proof}
	 Let $\beta(a) = b^{j}$. Then using $(A3)$, $\beta(a^{2}) = \beta(a)(a\cdot \beta(a)) = b^{j}(a\cdot b^{j}) = b^{j+jt}$. Inductively, we get, 
	 \[\beta(a^{l}) = b^{j(1+t+t^{2}+ \cdots + t^{l-1})} = b^{j(1 + (1+\lambda p) + (1+ 2\lambda p)+ \cdots + (1+(l-1)\lambda p))} = b^{j(l+\lambda p\frac{l(l-1)}{2})}.\] 
	 \begin{itemize}
	 	\item[$(i)$] Since $\beta\in Q$, $\beta(k^{h}) = \beta(k)$. Therefore, $b^{j} = \beta(a) = \beta(a^{b}) = \beta(a^{pr+1}) = b^{j(pr+1)}$ which implies that $jpr + j \equiv j \Mod{p^{2}}$. Since $\gcd(r,p) = 1$, $j\equiv 0 \Mod{p}$. Thus, $\beta(a^{l}) = b^{jl}\in \langle b^{p}\rangle$, for all $l$. Hence, One can easily observe that $\beta$ is a group homomorphism and $Im(\beta)\le \langle b^{p}\rangle$.
	 	\item[$(ii)$] Since $\beta\in Q$, $k^{\beta(k^{\prime})} = k$. Therefore, using the Lemma \ref{s4l2} $(vi)$, $a^{l} = (a^{l})^{\beta(a)} = (a^{l})^{b^{j}} = a^{\frac{jl(l-1)}{2}((pr+1)^{\lambda p}-1)+l(pr+1)^{j}}$. Now, using the part $(i)$ and $(G3)$, we get, $l(pr+1)^{j}\equiv l \Mod{m}$, for all $l$.
	 	\item[$(iii)$] First, note that $a^{l}\cdot b^{p} = b^{p}$ and using the part $(ii)$, $(a^{l})^{b^{p}} = a^{l}$, for all $l$. Hence, the result follows using the part $(i)$.
	 	
	 	\item[$(iv)$] Using the Lemma \ref{s4l3} $(i)$, we have, $\gamma(b^{u}) = a^{\mu \sum_{\nu=0}^{u-1}{(pr+1)^{i\nu}}}$, for all $u$. Then, using the part $(ii)$, for all $l$, we get,
	 	\[\gamma\beta(a^{l}) = \gamma(b^{lj}) = a^{\mu \sum_{\nu=0}^{lj-1}{(pr+1)^{i\nu}}} = a^{{\mu}\left(\frac{(pr+1)^{ijl}-1}{(pr+1)^{i}-1}\right)} = 1.\]
	 	Thus, $\gamma\beta = 0$.
	 	\item[$(v)$] Follows directly using the part $(iv)$.
	 	\item[$(vi)$] Using $\beta(k^{h}) = \beta(k)$ and the part $(i)$, $\beta\gamma(hh^{\prime}) = \beta(\gamma(h)^{\alpha(h^{\prime})}\gamma(h^{\prime})) = \beta(\gamma(h)^{\alpha(h^{\prime})}) \beta(\gamma(h^{\prime})) = \beta(\gamma(h))\beta(\gamma(h^{\prime}))$.   Hence, $\beta\gamma \in Hom(K,K)$. 
	 	\item[$(vii)$] Using the Lemma \ref{s4l3} $(i)$, we have, $\gamma(b^{u}) = a^{\mu \sum_{\nu=0}^{u-1}{(pr+1)^{i\nu}}}$, for all $u$. Also, using the part $(i)$, we have, $\beta\gamma(b^{u}) = b^{uj\mu}$, for all $u$. Therefore,
	 	\[(\alpha+\beta\gamma)(b^{u}) = b^{u(i+j\mu+p\mu\lambda\frac{u-1}{2})}.\]
	 \n  Now, one can easily observe that $\alpha +\beta\gamma$ is a bijection. Hence, using the part $(vi)$, $\alpha+\beta\gamma \in Aut(H)$.
	 	
	 	\vspace{.2cm}
	 	
	 	\n Now, using the part $(i)$, $(C5)$ and $(C6)$, $k\cdot (\alpha+\beta\gamma)(h) = k\cdot \alpha(h)\beta\gamma(h) = (k\cdot \alpha(h))(k^{\alpha(h)}\cdot \beta(\gamma(h)) = \alpha(k\cdot h)\beta(\gamma(h) = \alpha(k\cdot h)\beta\gamma(k\cdot h) = (\alpha+\beta\gamma)(k\cdot h)$ and $k^{(\alpha+\beta\gamma)(h)} = k^{\alpha(h)\beta\gamma(h)} = (k^{\alpha(h)})^{\beta\gamma(h)} = k^{\alpha(h)} = k^{h}$. Hence, $\alpha+\beta\gamma \in P$.
%	 	\item[$(viii)$] Using the part $(vii)$, $\beta\gamma(b^{u}) = b^{uj\mu}$, for all $u$. Therefore, using the Lemma \ref{s4l3} $(i)$ and the part $(i)$, $\alpha\beta\gamma(b^{u}) = b^{iju\mu}$, for all $u$. Then, one can easily observe that $\alpha\beta\gamma \in Aut(H)$.  Now, using the Lemma \ref{s4l2} $(vi)$ and the parts $(i)$, $(ii)$, for all $l$, $a^{l}\cdot \alpha\beta\gamma(b) = a^{l}\cdot b^{ij\mu} = b^{ij\mu t^{l}} = b^{ij\mu(1+p\lambda)^{l}} = b^{ij\mu} = \alpha\beta\gamma(b^{t^{l}}) = \alpha\beta\gamma(a^{l}\cdot b)$ and $(a^{l})^{\alpha\beta\gamma(b)} = (a^{l})^{b^{ij\mu}} = a^{l(pr+1)^{ij\mu}} = a^{l}$. Thus, $k\cdot \alpha\beta\gamma(h) = \alpha\beta\gamma(k\cdot h)$ and $k^{\alpha\beta\gamma(h)} = \alpha\beta\gamma(k^{h})$. Hence, $\alpha\beta\gamma\in P$.
%	 	\item[$(ix)$] Using the part $(vii)$ and $(viii)$, $\alpha + \beta\gamma = \alpha(1 +  \alpha^{-1}\beta\gamma) \in P$.
	 
	 \end{itemize}
\end{proof}
\n Note that, using the Lemma \ref{s4l4} $(iii)$, multiplication in the group $\mathcal{A}$ reduces to the usual multiplication of matrices.

\begin{theorem}\label{s4t1}
	Let $A,B,C,$ and $D$ be defined as above. Then $Aut(G) = ABCD$.% Further, if $\beta\in Q$, then $Aut(G) \simeq (C\rtimes (A\times D)) \rtimes B$.
\end{theorem}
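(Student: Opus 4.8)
The plan is to deduce Theorem \ref{s4t1} from Theorem \ref{s2t1} in exactly the way Theorem \ref{s3t1} was obtained. Both factors $H = \langle b\rangle \cong \mathbb{Z}_{p^{2}}$ and $K = \langle a\rangle \cong \mathbb{Z}_{m}$ are abelian, so Theorem \ref{s2t1} applies and yields $Aut(G) = ABCD$ the moment its hypothesis is verified, namely that $1 - \beta\gamma \in P$ for every $\begin{pmatrix}\alpha & \beta\\ \gamma & \delta\end{pmatrix} \in \mathcal{A}$. I would secure this by proving the stronger statement $\beta\gamma = 0$, the trivial endomorphism of $H$; then $1 - \beta\gamma$ is the identity map on $H$, which lies in $P$ trivially, and the theorem follows in one line. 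Thus everything is reduced to the single assertion $\beta\gamma = 0$, the analogue of Lemma \ref{l2}$(v)$.

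To prove $\beta\gamma = 0$ I would argue that $\beta$ annihilates the image of $\gamma$. By Lemma \ref{s4l3}$(i)$ we have $Im(\gamma) \subseteq \langle a^{p}\rangle$, so it suffices to show $\beta(a^{pc}) = 1$ for all $c$. Writing $\beta(a) = b^{j}$ and expanding $\beta(a^{l})$ by iterating $(A3)$ together with the matched-pair identities $a\cdot b^{c} = b^{ct}$ and $a^{l}\cdot b^{c} = b^{c t^{l}}$ of Lemma \ref{s4l2}, the second-order terms acquire a factor $p^{2}$ (here $p$ odd is used) and vanish modulo the order $p^{2}$ of $b$, leaving $\beta(a^{pc}) = b^{jpc}$. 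Hence $\beta$ kills $\langle a^{p}\rangle$ exactly when $p \mid j$. The congruence $p \mid j$ is forced by the relation $\beta(k^{h}) = \beta(k)$: taking $k = a$, $h = b$ and using $a^{b} = a^{pr+1}$ gives $b^{j} = \beta(a^{pr+1}) = b^{j(pr+1)}$, so $jpr \equiv 0 \pmod{p^{2}}$, and since $\gcd(r,p) = 1$ by $(G2)$ we obtain $p \mid j$, just as in Lemma \ref{s4l4}$(i)$. Combining $Im(\gamma)\subseteq\langle a^{p}\rangle$ with $\beta|_{\langle a^{p}\rangle} = 1$ gives $\beta\gamma = 0$, and Theorem \ref{s2t1} then delivers $Aut(G) = ABCD$.

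The main obstacle is that the two facts I lean on are, as presently stated, available only under an extra hypothesis, so the genuine content of the proof is to remove that hypothesis. Lemma \ref{s4l4} is proved under the assumption $\beta \in Q$, whereas Theorem \ref{s2t1} requires $\beta\gamma = 0$ for the $\beta$ appearing in an \emph{arbitrary} matrix of $\mathcal{A}$; in particular the relation $\beta(k^{h}) = \beta(k)$ (equivalently $Im(\beta) \subseteq \langle b^{p}\rangle$) must be established for every such matrix, not just for $\beta \in Q$. I would obtain it by projecting the defining identity $(A5)$, $\beta(k)(\delta(k)\cdot\alpha(h)) = \alpha(k\cdot h)(\gamma(k\cdot h)\cdot\beta(k^{h}))$, into $H/\langle b^{p}\rangle$, where the $\sigma$-action of $K$ is trivial, and comparing the two sides; this is the Section~4 counterpart of the general computation behind Lemma \ref{l2}. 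A secondary point is the case split in Lemma \ref{s4l3}: its conclusion $Im(\gamma)\subseteq\langle a^{p}\rangle$ needs $(pr+1)^{pl}\not\equiv 1 \pmod{m}$ for all $l \ne 0$, and I would check that the complementary situation $(pr+1)^{pl}\equiv 1$ makes an action a homomorphism, forcing $G$ to be a semidirect product and hence excluded from consideration.
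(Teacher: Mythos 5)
Your overall strategy coincides with the paper's (verify the hypothesis $1-\beta\gamma\in P$ of Theorem \ref{s2t1} and conclude), but the way you verify it contains a genuine error: the claim $\beta\gamma=0$ is \emph{false} for the groups of this section, and your reason for discarding the case where it fails is wrong. Lemma \ref{s4l3}$(i)$ gives $Im(\gamma)\subseteq\langle a^{p}\rangle$ only under the hypothesis $(pr+1)^{pl}\not\equiv 1 \Mod{m}$ for all $l\ne 0$; in the complementary case the congruence (\ref{s4e1}) is vacuous and the exponent $\mu$ in $\gamma(b)=a^{\mu}$ is unrestricted. Then for $\beta\in Q$ with $\beta(a)=b^{p}$ and $p\nmid\mu$ one gets $\beta\gamma(b)=\beta(a^{\mu})=b^{p\mu}\ne 1$. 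This case is not excluded by the standing assumption that $G$ is not a semidirect product: $K\triangleleft G$ iff $t\equiv 1\Mod{p^{2}}$ (ruled out by $(G1)$), and $H\triangleleft G$ iff $pr\equiv 0\Mod{m}$, and neither follows from $(pr+1)^{p}\equiv 1\Mod{m}$. Concretely, $p=3$, $m=9$, $r=1$, $t=4$ satisfies $(G1)$--$(G3)$, neither factor is normal, yet $(pr+1)^{p}=4^{3}\equiv 1\Mod{9}$. The paper itself treats this situation as a genuine Zappa-Sz\'{e}p case: it is Case $(i)$ of Theorem \ref{s4t2}, where all $m$ values of $\mu$ occur (see also the closing remark $\langle\gamma\rangle\simeq\mathbb{Z}_{m}$) and $B\simeq\mathbb{Z}_{p}$ is nontrivial, so matrices with $\beta\gamma\ne 0$ really do arise in $\mathcal{A}$. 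Your proposal therefore proves the theorem only in the sub-case $(pr+1)^{pl}\not\equiv 1\Mod{m}$, which is exactly where Section 3's argument (Lemma \ref{l2}$(v)$, which you are imitating) carries over; the point of Section 4 is that it does not carry over in general.

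The paper's own proof avoids the issue: it never asserts $\beta\gamma=0$ (what vanishes is the \emph{other} composite, $\gamma\beta=0$ in $Hom(K,K)$, Lemma \ref{s4l4}$(iv)$ --- possibly the source of the confusion). Instead, Lemma \ref{s4l4}$(vi)$--$(vii)$ shows $\beta\gamma\in Hom(H,H)$ and $\alpha+\beta\gamma\in Aut(H)\cap P$, which yields $1-\beta\gamma\in P$ even when $\beta\gamma\ne 0$. The mechanism is that $Im(\beta)\le\langle b^{p}\rangle$ forces $\beta\gamma(b^{u})=b^{uj\mu}$ with $p\mid j$, so $(1-\beta\gamma)(b^{u})=b^{u(1-j\mu)}$ with $1-j\mu$ a unit modulo $p^{2}$, hence an automorphism of $H$; the two $P$-conditions are then checked using the relation $k=k^{\beta(k^{\prime})}$ built into the definition of $Q$. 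So the repair is not to force $\beta\gamma=0$ --- that is impossible --- but to prove the weaker statement that Theorem \ref{s2t1} actually requires, as the paper does. Your secondary observation, that the hypothesis of Theorem \ref{s2t1} should be checked for the $\beta$ of an arbitrary matrix of $\mathcal{A}$ rather than only for $\beta\in Q$, is a fair criticism of the paper's level of rigor, but your proposed resolution inherits the same fatal case and so does not close that gap either.
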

\begin{proof}
		Using the Lemma \ref{s4l4} $(vii)$, $\alpha+\beta\gamma \in P$. In particular, $1-\beta\gamma \in P$. Therefore, by the Theorem \ref{s2t1}, we have, $Aut(G) = ABCD$. 
		\end{proof}
\begin{theorem}\label{s4t2}
	Let $G$ be as above. Then
\begin{equation*}
|Aut(G)| = \left\{\begin{array}{ll}
p^{2}m\frac{\phi(m)}{p-1}, & \text{if}\; (pr+1)^{p}\equiv 1 \Mod{m}\\
pm\frac{\phi(m)}{p-1}, & \text{if}\; (pr+1)^{p}\not\equiv 1 \Mod{m}
\end{array}\right..
\end{equation*}
\end{theorem}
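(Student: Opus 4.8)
The plan is to compute the order through the factorization $Aut(G)=ABCD$ of Theorem \ref{s4t1}. By the argument of Theorem \ref{abcd} — whose hypotheses are supplied here by Lemma \ref{s4l4}$(iv)$--$(vii)$, which give $\gamma\beta=0$, $\beta\gamma\in Hom(H,H)$ and $\alpha+\beta\gamma\in P$ — the subgroup $E$ is normal in $\mathcal{A}$ and $\mathcal{A}=E\rtimes B$ with $E\cap B=\{1\}$. Hence $|Aut(G)|=|E|\,|B|=|X|\,|Q|$, and it suffices to compute the two factors. I expect $|X|$ to be insensitive to the case split, so that the dichotomy in the statement comes entirely from $|Q|$.

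First I would compute $|Q|$. Since $Q=Hom(K,Stab_{H}(K))$, everything reduces to identifying $Stab_{H}(K)=\{b^{n}\in H : (pr+1)^{n}\equiv 1 \Mod{m}\}$. Because $b^{p^{2}}=1$ forces $(pr+1)^{p^{2}}\equiv 1\Mod{m}$, the order of $pr+1$ modulo $m$ divides $p^{2}$, and as $G$ is a genuine (non-semidirect) Zappa--Sz\'ep product this order is not $1$; thus it equals $p$ or $p^{2}$. When $(pr+1)^{p}\equiv 1\Mod{m}$ we get $Stab_{H}(K)=\langle b^{p}\rangle\cong\mathbb{Z}_{p}$, so $|Q|=|Hom(\mathbb{Z}_{m},\mathbb{Z}_{p})|=\gcd(m,p)=p$ (as $p\mid m$); when $(pr+1)^{p}\not\equiv 1\Mod{m}$ the stabilizer is trivial and $|Q|=1$. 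This is precisely the dichotomy of the statement.

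Next I would compute $|X|=|E|$ by parametrising $\alpha(b)=b^{i}$ (with $i\in U(p^{2})$, using $\alpha\in Aut(H)$ from Lemma \ref{s4l3}$(ii)$), $\gamma(b)=a^{\mu}$, and $\delta(a)=a^{\rho}$ (with $\rho\in U(m)$), and imposing the defining relations of $X$. Conditions $(A1)$--$(A2)$ are absorbed by the closed forms for $\alpha(b^{u})$ and $\gamma(b^{u})$ from the proof of Lemma \ref{s4l3}, together with the well-definedness constraint $\gamma(b^{p^{2}})=1$ of the Equation (\ref{s4e1}). The relation $\delta(k)\cdot\alpha(h)=\alpha(k\cdot h)$ coming from $(A5)$ reduces, via Lemma \ref{s4l2}$(v)$ and $t=1+\lambda p$, to $i\lambda(\rho-1)\equiv 0\Mod{p}$; since $\gcd(i\lambda,p)=1$ this says $\rho\equiv 1\Mod{p}$, so $\delta$ is pinned to the subgroup $\{\rho\in U(m):\rho\equiv 1\Mod{p}\}$, which is the kernel of $U(m)\to U(p)$ and therefore has index $p-1$ and order $\phi(m)/(p-1)$. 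The remaining relation $\delta(k)^{\alpha(h)}\gamma(h)=\gamma(k\cdot h)\delta(k^{h})$ from $(A6)$ becomes, after substituting Lemma \ref{s4l2}$(iv),(vi)$, a single linear congruence in the exponents, schematically $\rho\big((pr+1)^{i}-(pr+1)\big)\equiv \mu\,\lambda\,\Sigma_{i}\Mod{m}$ with $\Sigma_{i}=\sum_{\nu=0}^{p-1}(pr+1)^{i\nu}$, the $\big((pr+1)^{\lambda p}-1\big)$ contributions being controlled by the case hypothesis. Counting admissible triples $(i,\mu,\rho)$ subject to this congruence should give $|X|=pm\,\phi(m)/(p-1)$ in both cases.

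The main obstacle is exactly this last count. The congruence from $(A6)$ couples $i$, $\mu$ and $\rho$, and its number of solutions is governed by the $p$-adic behaviour of $\Sigma_{i}$ and of $(pr+1)^{i}-(pr+1)$; one uses $pr+1\equiv 1\Mod{p}$ to get $v_{p}(\Sigma_{i})=1$, and $(G3)$ to control $(pr+1)^{p}$, after which one checks solvability for every admissible $(i,\rho)$ and shows the resulting $\mu$-counts aggregate — over the $U(p^{2})$ choices of $i$ and the $\phi(m)/(p-1)$ choices of $\rho$ — to the clean value $pm\,\phi(m)/(p-1)$, separating the $p$-part of $m$ from its prime-to-$p$ part. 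Once $|X|$ is established, multiplying by $|Q|=p$ or $1$ yields the two stated values $p^{2}m\,\phi(m)/(p-1)$ and $pm\,\phi(m)/(p-1)$ respectively.
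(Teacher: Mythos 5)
Your plan diverges from the paper's proof at the decisive point, and the divergence is a genuine gap rather than an alternative route. The paper does not obtain the dichotomy from $|Q|$: its proof of Theorem \ref{s4t2} asserts $B\simeq\mathbb{Z}_{p}$ in \emph{both} cases (quoting only Lemma \ref{s4l4}$(i)$), and it locates the factor-of-$p$ difference inside $X$, via Lemma \ref{s4l3}$(i)$: when $(pr+1)^{p}\not\equiv 1\Mod{m}$, the requirement that $\gamma$ be well defined on $H\simeq\mathbb{Z}_{p^{2}}$ (i.e. $\gamma(b^{p^{2}})=1$, Equation (\ref{s4e1})) forces $Im(\gamma)\subseteq\langle a^{p}\rangle$, so the parameter $\mu$ in $\gamma(b)=a^{\mu}$ has at most $m/p$ admissible values, not $m$. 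This flatly contradicts your expectation that ``$|X|$ is insensitive to the case split.'' The count you need in case (ii), $|X|=pm\,\phi(m)/(p-1)$, is not merely the ``main obstacle'' you concede; it is unattainable: imposing only the constraints you yourself derive ($\rho\equiv 1\Mod{p}$ from $(A5)$, hence $\phi(m)/(p-1)$ choices) together with Equation (\ref{s4e1}) (at most $m/p$ choices of $\mu$) gives $|X|\le |U(p^{2})|\cdot\frac{m}{p}\cdot\frac{\phi(m)}{p-1}=m\,\phi(m)$, which is strictly smaller than $pm\,\phi(m)/(p-1)$. So with your (correct) $|Q|=1$ the product $|X|\,|Q|$ cannot reach the stated value.

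There is a second, structural failure. Your opening step, $|Aut(G)|=|X|\,|Q|$ via $\mathcal{A}=E\rtimes B$, is incompatible with your own computation of $Q$ in case (ii). If $\mathcal{A}=E\rtimes B$ held with $B$ trivial, every automorphism would have trivial $\beta$-component, i.e. would map $K=\langle a\rangle$ into itself. But conjugation by $b^{-1}$ is an automorphism with $a\mapsto b^{-1}ab=b^{t-1}a^{pr+1}$, whose $H$-component $b^{t-1}=b^{\lambda p}$ is nontrivial and, in case (ii), does not lie in $Stab_{H}(K)=\{1\}$; hence this automorphism is not in $EB$. (In case (i) one has $b^{\lambda p}\in\langle b^{p}\rangle=Stab_{H}(K)$, which is exactly why the decomposition is coherent there.) So the decomposition you import from Theorems \ref{abcd} and \ref{s4t1} breaks down precisely in the case where $Q$ becomes trivial, and the architecture of your proof collapses with it. To be clear, your stabilizer analysis of $Q$ is sound — it follows from Lemma \ref{s4l4}$(ii)$, a condition the paper's own proof overlooks when it claims $B\simeq\mathbb{Z}_{p}$ in case (ii) — but that correct step is exactly what makes the $E\rtimes B$ route unusable for deriving the case-(ii) formula, so the proposal cannot be repaired by simply filling in the omitted count.
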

\begin{proof}
	Let $\beta \in Q$. Then using the Lemma \ref{s4l4} $(i)$, we have that $\beta(a^{l}) = b^{jl}$, where $j\equiv 0 \Mod{p}$. Thus, $B \simeq \mathbb{Z}_{p}$. Now, let $(\alpha,\gamma,\delta)\in X$ be such that $\alpha(b) = b^{i}$, $\gamma(b) = a^{\mu}$ and $\delta(a) = a^{s}$, where $i\in \mathbb{Z}_{p^{2}}$, $\gcd(i,p^{2}) = 1$, $0\le \mu \le m-1$, and $s\in U(m)$. Then using $\alpha(hh^{\prime}) = \alpha(h)(\gamma(h)\cdot \alpha(h^{\prime}))$, $\gamma(hh^{\prime}) = \gamma(h)^{\alpha(h^{\prime})}\gamma(h^{\prime})$ and the Lemma \ref{s4l3} $(i)$, we have 
	\begin{equation}\label{s4e2}
	\alpha(b^{u}) = b^{i(u+ \frac{u(u-1)}{2}p\mu\lambda)} \;\text{and}\; \gamma(b^{u}) = a^{\mu \sum_{\nu=0}^{u-1}{(pr+1)^{i\nu}}}.
	\end{equation}

\n	Now, using $(\alpha,\gamma,\delta)\in X$, $\delta(k)\cdot \alpha(h) = \alpha(k\cdot h)$, $b^{it} = \alpha(b^{t}) = \alpha(a\cdot b) = \delta(a)\cdot \alpha(b) = a^{s}\cdot b^{i} = b^{it^{s}}$. Thus, $it^{s} \equiv it \Mod{p^{2}}$ which implies that $(1+p\lambda)^{s-1} \equiv 1 \Mod{p^{2}}$. Therefore, $s\equiv 1 \Mod{p}$. Using $(\alpha,\gamma,\delta)\in X$, $\delta(k)^{\alpha(h)}\gamma(h) = \gamma(k\cdot h)\delta(k^{h})$, $(G3)$ and the fact that $s\equiv 1 \Mod{p}$, we get, $a^{\mu \sum_{\nu=0}^{t-1}{(pr+1)^{i\nu}}+s(pr+1)}  = \gamma(b^{t})\delta(a^{pr+1}) = \gamma(a\cdot b)\delta(a^{b}) = \delta(a)^{\alpha(b)}\gamma(b) = (a^{s})^{b^{i}}a^{\mu}$ $= a^{\frac{is(s-1)}{2}((pr+1)^{\lambda p}-1)+s(pr+1)^{i}}a^{\mu} = a^{s(pr+1)^{i} + \mu}$. Thus $\mu \sum_{\nu=0}^{t-1}{(pr+1)^{i\nu}}+s(pr+1)\equiv s(pr+1)^{i}+\mu \Mod{m}$. Therefore,
	\begin{align*}
	 \mu +s(pr+1)^{i} &\equiv \mu\left(\frac{(pr+1)^{it}-1}{(pr+1)^{i}-1}\right) +s(pr+1) \Mod{m}\\
	& \equiv \mu\left(\frac{(pr+1)^{i(1+p\lambda)}-1}{(pr+1)^{i}-1}\right) +s(pr+1) \Mod{m}\\
	& \equiv \mu\left(\frac{(pr+1)^{i}(pr+1)^{ip\lambda}-1}{(pr+1)^{i}-1}\right) +s(pr+1)\Mod{m}.
	\end{align*}
\n We consider two cases, namely $(pr+1)^{p}\equiv 1 \Mod{m}$ and  $(pr+1)^{p}\not\equiv 1 \Mod{m}$.

\vspace{.2cm}
	
	\n \textit{Case($i$).} If $(pr+1)^{p}\equiv 1 \Mod{m}$, then $\mu + s(pr+1)^{i}\equiv \mu + s(pr+1) \Mod{m}$ which implies that $i\equiv 1 \Mod{p}$. Thus in this case, the choices for the maps $\alpha$, $\gamma$ and $\delta$ are, we get, $\alpha_{i}(b) = b^{i}$, $\gamma_{\mu}(b) = a^{\mu}$, and $\delta_{s}(a) = a^{s}$, where $i\in U(p^{2})$, $i \equiv 1 \Mod{p}$, $0\le \mu \le m-1$, $s\in U(m)$, and $s\equiv 1 \Mod{p}$. 
	
	\vspace{.2cm}
	
	\n \textit{Case($ii$).} If $(pr+1)^{p}\not\equiv 1 \Mod{m}$, then using the Lemma \ref{s4l3}, $\mu \equiv 0 \Mod{p}$. Therefore, $\mu + s(pr+1)^{i}\equiv \mu + s(pr+1) \Mod{m}$ which implies that $i\equiv 1 \Mod{p}$. Thus in this case, the choices for the maps $\alpha$, $\gamma$ and $\delta$ are, we get, $\alpha_{i}(b) = b^{i}$, $\gamma_{\mu}(b) = a^{\mu}$, and $\delta_{s}(a) = a^{s}$, where $i\in U(p^{2})$, $i \equiv 1 \Mod{p}$, $0\le \mu \le m-1$, $\mu \equiv 0 \Mod{p}$, $s\in U(m)$ and $s\equiv 1 \Mod{p}$. 
	
	\vspace{.2cm}
	
	\n From both the cases $(i)$ and $(ii)$, we observe that for all $\mu$, $i\equiv 1 \Mod{p}$ and $s\equiv 1 \Mod{p}$. Using these conditions, first, we find the structure of $Aut(G)$.
	
	\vspace{.2cm}
	
	\n Since, $A\times D$ normalizes $C$, $M$ normalizes C. So, clearly, $C\triangleleft E$ and $M\cap C = \{1\}$. Now, if $\begin{pmatrix}
	\alpha & 0\\
	\gamma & \delta
	\end{pmatrix} \in E$, then
	\[\begin{pmatrix}
		\alpha & 0\\
	\gamma & \delta
	\end{pmatrix} = \begin{pmatrix}
	\alpha & 0\\
	0 & \delta
	\end{pmatrix}\begin{pmatrix}
	1 & 0\\
	\delta^{-1}\gamma & 1
	\end{pmatrix} \in MC\]
	\n Thus $E = C\rtimes M$. Now, using the Lemma \ref{s4l4} $(iii)$ and $(iv)$, we get,
	\begin{equation}\label{s4e3}
	\begin{pmatrix}
	1 & \beta\\
	0 & 1
	\end{pmatrix}\begin{pmatrix}
	\alpha & 0\\
	\gamma & \delta
	\end{pmatrix}\begin{pmatrix}
	1 & \beta\\
	0 & 1
	\end{pmatrix}^{-1} = \begin{pmatrix}
	\alpha+ \beta\gamma & (\alpha+ \beta\gamma)(-\beta) + \beta\delta\\
	\gamma & \delta
	\end{pmatrix}.
	\end{equation}
	\n Using the Lemma \ref{s4l4} $(i)$ and $(ii)$, we have
	
\vspace{.2cm}
	
	\n $((\alpha+ \beta\gamma)(-\beta) + \beta\delta)(a) =(\alpha+ \beta\gamma)(-\beta)(a)(\beta\delta)(a)=(\alpha+\beta\gamma)(b^{-j})\beta(a^{s})= \alpha(b^{-j})\beta(\gamma(b^{-j}))b^{sj}= b^{-ij}\beta(a^{\mu\sum_{\nu=0}^{-j-1}{(pr+1)^{i\nu}}})b^{sj}= b^{j(s-i)}\beta\left(a^{\mu\left(\frac{(pr+1)^{-ij}-1}{(pr+1)^{i}-1}\right)}\right)= \beta(1) = 1$.

	Thus, $(\alpha+ \beta\gamma)(-\beta) + \beta\delta = 0$. Also, using the Lemma \ref{s4l4} $(vii)$, one can easily observe that $(\alpha+\beta\gamma, \gamma, \delta)\in X$. Therefore, by the Equation (\ref{s4e2}), \[\begin{pmatrix}
	1 & \beta\\
	0 & 1
	\end{pmatrix}\begin{pmatrix}
	\alpha & 0\\
	\gamma & \delta
	\end{pmatrix}\begin{pmatrix}
	1 & \beta\\
	0 & 1
	\end{pmatrix}^{-1} = \begin{pmatrix}
	\alpha+ \beta\gamma & 0\\
	\gamma & \delta
	\end{pmatrix}\in E.\]
	\n Thus $E \triangleleft \mathcal{A}$. Clearly, $E\cap B = \{1\}$.  Now, if $\begin{pmatrix}
	\alpha & \beta \\
	\gamma & \delta
	\end{pmatrix}\in \mathcal{A}$, then using $\gamma\alpha\beta = 0$, we get
	\[\begin{pmatrix}
	\alpha & \beta \\
	\gamma & \delta
	\end{pmatrix} = \begin{pmatrix}
	\alpha & 0 \\
	\gamma & \delta
	\end{pmatrix}\begin{pmatrix}
	1 & \alpha^{-1}\beta \\
	0 & 1
	\end{pmatrix}\in EB\]	
\n	Hence, $\mathcal{A} = E\rtimes B$ and so, $Aut(G)\simeq E\rtimes B\simeq (C\rtimes (A\times D))\rtimes B$.

\vspace{.2cm}

\n Thus, $|X|= p\times m\times\frac{\phi(m)}{p-1} = pm\frac{\phi(m)}{p-1}$ and $|Aut(G)| = |X||B| =  pm\frac{\phi(m)}{p-1}\times p = p^{2}m\frac{\phi(m)}{p-1}$ in the \textit{case($i$)}, and $|X|= p\times\frac{m}{p}\times\frac{\phi(m)}{p-1} = m\frac{\phi(m)}{p-1}$, $|Aut(G)| = |X||B| =  m\frac{\phi(m)}{p-1}\times p = pm\frac{\phi(m)}{p-1}$ in the \textit{case($ii$)}.
\end{proof}
\n Note that, in the Theorem \ref{s4t2}, we have $B \simeq \mathbb{Z}_{p}$, $\langle \alpha \rangle \simeq \mathbb{Z}_{p}$, $\langle \gamma \rangle \simeq \mathbb{Z}_{m}$ or $\langle \gamma \rangle \simeq \mathbb{Z}_{\frac{m}{p}}$.

\end{document}